\newtheorem*{lemma*}{Lemma}
\newtheorem{theorem}{Theorem}[section]
\newtheorem{lemma}[theorem]{Lemma}
\newtheorem{proposition}[theorem]{Proposition}
\theoremstyle{definition}
\newtheorem{definition}{Definition}
\newtheorem{remark}[theorem]{Remark}
\newcommand{\E}{{\mathbb{E}}}
\newcommand{\norm}[1]{\left\lVert#1\right\rVert}
\newcommand{\e}{\varepsilon}
	\renewcommand{\P}{\mathbb{P}}
\newcommand{\cC}{\mathcal{C}}
\newcommand{\cD}{\mathcal{D}}
\newcommand{\cG}{\mathcal{G}}
\newcommand{\cT}{\mathcal{T}}
\renewcommand{\setminus}{\backslash}
\newcommand{\TIG}{\mathsf{TIG}}
\newcommand{\TISG}{\mathsf{TISG}}
\def\ba{\begin{align}}
\def\ea{\end{align}}
\def\bs{\begin{split}}
\def\es{\end{split}}
\begin{document}

\title[Upper tail problem in the Poisson regime]{Upper tail behavior of the number of triangles in random graphs with constant average degree}

\author{Shirshendu Ganguly, Ella Hiesmayr, and Kyeongsik Nam}

\begin{abstract} 
Let $N$ be the number of triangles in an Erd\H{o}s–R\'enyi graph $\cG(n,p)$ on $n$ vertices with edge density $p=d/n,$ where $d>0$ is a fixed constant. It is well known that $N$  weakly converges to the  Poisson distribution with mean ${d^3}/{6}$ {as $n\rightarrow \infty$}. We address the upper tail problem for $N,$ namely, we investigate how fast $k$ must grow, so that $\P(N\ge k)$ is not well approximated anymore by the tail of the corresponding Poisson variable. Proving that the tail  exhibits a sharp phase transition, 
we essentially show that the upper tail is governed by Poisson behavior only when $k^{1/3} \log k< (\frac{3}{\sqrt{2}})^{2/3} \log n$  (sub-critical regime) as well as pin down the tail behavior when $k^{1/3} \log k> (\frac{3}{\sqrt{2}})^{2/3} \log n$ (super-critical regime). We further prove a structure theorem, showing that the sub-critical upper tail behavior is dictated by the appearance of almost $k$ {vertex-disjoint} triangles whereas in the supercritical regime, the excess triangles arise from a clique like structure of size approximately $(6k)^{1/3}$.  This settles the long-standing upper-tail problem in this case, answering a question of Aldous, complementing a long sequence of works, spanning multiple decades and culminating in \cite{upper_tail_localization}, which analyzed the problem only in the regime $p\gg \frac{1}{n}.$ The proofs rely on several novel graph theoretical results which could have other applications. 
\end{abstract}

\address{ Department of Statistics, Evans Hall, University of California, Berkeley, CA
94720, USA} 

\email{sganguly@berkeley.edu }

\address{ Department of Statistics, Evans Hall, University of California, Berkeley, CA
94720, USA} 

\email{ella.hiesmayr@berkeley.edu }

\address{ Department of Mathematical Sciences, KAIST, South Korea} 

\email{ksnam90@gmail.com }

\maketitle

\tableofcontents

\section{Introduction and main results} \label{section 1}

The statistical properties of the number of triangles or other local structures in random graphs have been a major topic of study for the last few decades witnessing several important advances.  Postponing a somewhat detailed overview of the literature to Section \ref{review}, let us first move towards stating the basic set up and the main results of this article.

Throughout the paper, let $G_n=\cG(n,\frac{d}{n})$ be the  Erd\H{o}s–R\'enyi graph on $n$ vertices with edge density $d/n$ where $d>0$ is a fixed constant. We will denote by $V_n:=V(G_n)$ and $E_n:=E(G_n),$ the corresponding vertex and edge sets respectively (note that $|V_n|=n).$
Further, let $N$ be the number of triangles in $G_n$, i.e., the number of unordered triples $(u,v,w)$ of vertices $u,v,w  \in V_n,$ where all the possible edges $(u,v),(v,w),(u,w)$ are elements of $E_n$. It is known \cite{bollobas1981} that $N$  weakly converges to the  Poisson distribution with mean $\frac{d^3}{6}$, to be denoted ${\mathsf{Poi}}(\frac{d^3}{6})$. This implies that, for any \emph{bounded} sequence $\{k_n\}_{n \in \mathbb{N}}$,
\begin{align} \label{poisson}
\Big\vert\mathbb{P}(N \geq k_n) -  \mathbb{P}\left({\mathsf{Poi}}\Big(\frac{d^3}{6}\Big) \geq k_n\right) \Big\vert \rightarrow 0
\end{align}
as $n\rightarrow \infty$, where throughout the paper, $\P$ and $\E$ will denote the underlying probability and the corresponding expectation.

The purpose of the article is to investigate at what depth into the tail, the Poisson behavior no longer holds, i.e.,
how fast  $\{k_n\}_{n \in \mathbb{N}}$ must grow such that no version of  \eqref{poisson} is true,  and further, how
does $G_n$ look like, conditional on $N \ge k$.

\subsection{Main results}
Throughout the paper, we assume that $k_n$ is increasing in $n$ and for the sake of simplicity, we will suppress the $n$ dependence and just use $k.$  
\begin{theorem} \label{theorem 1}
The following describes the ``upper-tail" probabilities of $N$.

1. There exists a constant $c>0$ depending only on $d$ such that   the following holds.  Let $\delta>0$ be a sufficiently small   constant. Suppose that
 $k^{1/3} \log k< ((\frac{3}{\sqrt{2}})^{2/3} - \delta) \log n$. Then, for sufficiently large  $n$,
\begin{align}\label{disj1}
 e^{-k\log k - ck} \leq \mathbb{P}( N  \geq  k) \leq e^{-k\log k + ck}.
\end{align}

2. {Let $\delta > 0$ and $0<\mu<\frac{1}{10}$ be constants.  Suppose that  $k^{1/3} \log k> ((\frac{3}{\sqrt{2}})^{2/3} + \delta) \log n$ and  $k\leq n^{\frac{1}{10}- \mu}$.  Then, for any $\e>0$ and sufficiently large  $n$,} 
\begin{align}\label{cliq}
n^{-((\frac{3}{\sqrt{2}})^{2/3}+\e) k^{2/3}}  \leq   \mathbb{P}( N  \geq  k) \leq n^{-((\frac{3}{\sqrt{2}})^{2/3}-\e) k^{2/3}}.
\end{align}
\end{theorem}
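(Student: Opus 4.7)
The plan hinges on identifying the two competing mechanisms for producing $\{N\geq k\}$: planting $k$ essentially vertex-disjoint triangles (cost $e^{-k\log k + O(k)}$) versus planting a near-clique on $m\asymp (6k)^{1/3}$ vertices (cost $n^{-\binom{m}{2}(1+o(1))}$). The identity $\binom{(6k)^{1/3}}{2} \approx (9/2)^{1/3}k^{2/3} = (3/\sqrt 2)^{2/3} k^{2/3}$ makes the two costs coincide precisely when $k^{1/3}\log k \asymp \log n$. The strategy in each regime is to prove a matching lower bound by planting the extremal structure and a matching upper bound by showing nothing else does better.

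For the lower bounds I would use standard second-moment or Poisson-approximation estimates. In the sub-critical regime, let $W$ count the number of $k$-tuples of vertex-disjoint triangles in $G_n$; then $\E W = (1+o(1))(d^3/6)^k/k!$, and a Janson-type argument gives $\P(W\geq 1) \geq \E W \cdot (1-o(1))$, which by Stirling equals $e^{-k\log k + O(k)}$. In the super-critical regime, set $m = \lceil (6k)^{1/3}\rceil$ and lower bound $\P(N \geq k)$ by the probability that some $m$-subset of vertices spans a clique; the expected count of $K_m$ subgraphs is $\approx n^m(d/n)^{\binom{m}{2}}$, which after taking logs yields the claimed $n^{-(3/\sqrt 2)^{2/3} k^{2/3}(1+o(1))}$.

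For the upper bounds I would start from the factorial moment inequality $\P(N\geq k) \leq \E\binom{N}{k}$ and enumerate the contributions by the shape $H = T_1 \cup \cdots \cup T_k$ formed by the chosen $k$ triangles. A shape on $m$ vertices and $t$ edges contributes at most $n^m (d/n)^t$ times combinatorial factors. The disjoint shape $(m,t)=(3k,3k)$ gives $e^{-k\log k + O(k)}$ after the symmetry factor $1/k!$, whereas the clique of size $(6k)^{1/3}$ gives the super-critical bound, saturating the Kruskal-Katona edge-triangle inequality $t \geq (3/\sqrt 2)^{2/3} k^{2/3}(1-o(1))$ among graphs with at least $k$ triangles.

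The central obstacle, and presumably where the ``novel graph theoretical results'' promised in the abstract enter, is proving a sharp structural dichotomy at the level of shapes: every shape supporting at least $k$ triangles either essentially decomposes into disjoint triangles or contains a near-clique on $(6k)^{1/3}$ vertices, and every intermediate shape carries strictly smaller weight than the better of the two extremes. Without such a dichotomy the enumeration yields only $C^k$-type bounds and loses the factorial $1/k!$ required for the sharp $e^{-k\log k}$ rate. I expect the bulk of the technical work to lie in classifying shapes by a refined type that simultaneously tracks vertices, edges, and triangle density, so that the sum over shapes telescopes to the two extremal contributions; the constraint $k \leq n^{1/10 - \mu}$ in the super-critical case presumably appears to control tail error terms in this enumeration.
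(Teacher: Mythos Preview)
Your lower bound sketch is essentially the paper's: second moment on $k$ disjoint triangles (Lemma~\ref{lower1}) and a direct clique-planting (Lemma~\ref{lower2}). That part is fine.

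The upper bound proposal, however, has a genuine gap, and you correctly locate it yourself. The factorial-moment route $\P(N\geq k)\leq \E\binom{N}{k}$ with a sum over shapes $H=T_1\cup\cdots\cup T_k$ is the natural first attempt, but the paper does \emph{not} proceed this way, precisely because controlling the middle of the shape spectrum is the whole difficulty. Your proposed fix --- a ``sharp structural dichotomy at the level of shapes'' --- is closer to the structure theorem (Theorem~\ref{theorem 2}) than to what is actually needed for Theorem~\ref{theorem 1}, and you do not indicate how to prove it.

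What the paper does instead is decompose $\{N\geq k\}$ by the \emph{connected} triangle-induced components: the event implies some $E_{\ell_1,\dots,\ell_m}$ with $\sum\ell_i=k$, and the BK inequality factors this as $\P(E_{1,\dots,1})\prod_{i>j}\P(F_{\ell_i})$. The disjoint-triangle piece $\P(E_{1,\dots,1})$ carries the $1/j!$ directly. The key graph-theoretic input is not a shape dichotomy but Lemma~\ref{lemma 3.2}: for a \emph{connected} graph, $\Delta \leq \frac{\sqrt 2}{3}(|E|-|V|+1)^{3/2}+3(|E|-|V|+1)$, so the probability of a connected triangle-induced subgraph with $\ell$ triangles is at most $n^{-(1-\eta)h(\ell)}$ where $h$ inverts this bound (Lemma~\ref{lemma 3.3}; this is where $k\leq n^{1/10-\mu}$ enters, to absorb the entropy of choosing vertices and edges in the dense-subgraph extraction of Lemma~\ref{lemma 4.3}). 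The upper bound then reduces to a one-variable optimization $\max_j\, e^{-j\log j}\,n^{-h(k-j)}$ via the concavity of $h$, and \eqref{006} shows the maximizer flips from $j\approx k$ to $j\approx 0$ exactly at $k^{1/3}\log k\asymp a\log n$. No shape classification is needed; the tree-excess quantity $|E|-|V|$ for connected pieces is the organizing parameter you were missing.
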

Thus Theorem \ref{theorem 1} establishes a tight bound on the upper tail behavior of $N$ and provides a precise critical location at which the same changes.
It shows that the Poisson tail holds when $k^{1/3} \log k\le  (\frac{3}{\sqrt{2}})^{2/3} \log n$ (to be called the sub-critical regime from now on) and not beyond, the super-critical regime.

The next result answers a question of David Aldous, who had asked about the structure of the graph $G_n,$ conditioned on $N\ge k.$ 
We start with the following notation. 
{For a graph $G = (V,E),$ let $\Delta(G)$ (simply $\Delta$ when $G$ is clear) be the number of triangles in $G$ and for any subset $W \subseteq V$, define $\Delta_G(W)$ to be the number of triangles in $G$ consisting of vertices in $W$.   If it is clear what the underlying graph $G$ is, then we suppress the $G$ dependence in $\Delta_G(W)$ and just use the notation $\Delta(W)$. }

\begin{theorem}[Structure theorem] \label{theorem 2}
Let $0<\mu<\frac{1}{10}$ be fixed and $k\leq n^{\frac{1}{10}- \mu}.$ 
Now for any  constant { $0<\varepsilon<1$} ,  define the following two events {
\begin{align*}
\mathcal{D}_{\e}:=\{  \text{There exist at least}  \    (1-\varepsilon) k \  \text{vertex-disjoint triangles} \}
\end{align*}
and
\begin{align*}
\mathcal{C}_{\e}:= \{ \text{There exists} \ V'\subseteq V_n  \  \text{such that} \ |V'| \leq   (1+ \varepsilon) 6^{1/3}k^{1/3}, \Delta(V') \geq  (1-\varepsilon ) k \}.
\end{align*}}
In words, $\mathcal{C}_{\e}$ denotes the presence of an ``almost" clique accounting for most triangles. 

Then,  
\begin{align} \label{011} 
\lim_{n\rightarrow \infty} \mathbb{P}( \mathcal{D}_\e \cup \mathcal{C}_\e | N \geq k)  = 1.
\end{align}
This, in conjunction with Theorem \ref{theorem 1}, implies  the following structure theorem.  Let $\delta>0$ be a sufficiently small constant. If  $k^{1/3} \log k< ((\frac{3}{\sqrt{2}})^{2/3} - \delta) \log n$, then
\begin{align}  \label{012}
\lim_{n\rightarrow \infty} \mathbb{P}(\mathcal{D}_\e | N \geq k)  = 1,
\end{align}
and if  $k^{1/3} \log k> ((\frac{3}{\sqrt{2}})^{2/3} + \delta) \log n$, then
\begin{align}  \label{013}
\lim_{n\rightarrow \infty} \mathbb{P}(\mathcal{C}_\e | N\geq k)  = 1.
\end{align}
\end{theorem}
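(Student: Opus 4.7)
My plan is to first establish the unconditional dichotomy \eqref{011}, and then combine it with Theorem \ref{theorem 1} to deduce the regime-specific conclusions \eqref{012} and \eqref{013}. The first part is structural-combinatorial, while the second is a comparison of first-moment bounds.

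For \eqref{011}, the goal is to show that the ``bad'' event $\mathcal{B}_\e := \mathcal{D}_\e^c \cap \mathcal{C}_\e^c \cap \{N \geq k\}$ has negligible probability. The strategy rests on a combinatorial classification: any graph $H$ with $\Delta(H) \geq k$ outside $\mathcal{D}_\e \cup \mathcal{C}_\e$ must contain a subgraph $H'$ whose expected number of copies in $G_n$, namely $n^{v(H')}(d/n)^{e(H')}$, is strictly smaller than $\max(e^{-k \log k}, n^{-(3/\sqrt{2})^{2/3} k^{2/3}})$. Concretely, I would begin by selecting a maximum vertex-disjoint triangle packing $\cT$ of $G_n$; on $\mathcal{D}_\e^c$, $|\cT| < (1-\e)k$, so at least $\e k$ further triangles must share vertices with $\cT$. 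These ``clustered'' triangles carve out a dense core whose vertex set, on $\mathcal{C}_\e^c$, is too large to be tightly packed into $(1+\e)(6k)^{1/3}$ vertices. The main obstacle is to show quantitatively that these intermediate shapes all have $e(H') - v(H')$ strictly exceeding the values at the two extremes (disjoint triangles, $v=e=3k$; clique, $v=(6k)^{1/3}$, $e=\binom{(6k)^{1/3}}{2}$). A union bound over the possible shapes — enumerated as ``books'', theta-graphs, mixed unions of small cliques with residual triangles, etc. — would then yield $\P(\mathcal{B}_\e) = o(\P(N \geq k))$. This is where I expect the novel graph-theoretic lemmas alluded to in the abstract to enter, as extremal results on triangle-rich graphs refining Kruskal--Katona.

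Given \eqref{011}, the regime-specific claims follow by a direct comparison. For \eqref{013}, in the supercritical regime, a first-moment bound gives
\[
\P(\mathcal{D}_\e) \leq \frac{1}{((1-\e)k)!} \binom{n}{3}^{(1-\e)k} \left(\frac{d}{n}\right)^{3(1-\e)k} = \exp\{-(1-\e) k \log k + O(k)\},
\]
and the hypothesis $k^{1/3} \log k > ((3/\sqrt{2})^{2/3} + \delta) \log n$ makes this $o\big(n^{-((3/\sqrt{2})^{2/3}+\e')k^{2/3}}\big)$ for $\e'$ small, which by Theorem \ref{theorem 1} is $o(\P(N \geq k))$; hence $\P(\mathcal{D}_\e \mid N\geq k)\to 0$, and \eqref{011} forces $\P(\mathcal{C}_\e \mid N \geq k) \to 1$. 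Symmetrically, for \eqref{012}, writing $s_\e := (6(1-\e)k)^{1/3}$, Kruskal--Katona gives
\[
\P(\mathcal{C}_\e) \leq \sum_{v \leq (1+\e)(6k)^{1/3}} \binom{n}{v} \binom{\binom{v}{2}}{\binom{s_\e}{2}} p^{\binom{s_\e}{2}} = n^{-(3/\sqrt{2})^{2/3}(1-\e)^{2/3} k^{2/3}(1+o(1))},
\]
and for $\delta$ and $\e$ small enough that $(3/\sqrt{2})^{2/3}(1-\e)^{2/3} > (3/\sqrt{2})^{2/3} - \delta$, this is $o(e^{-k\log k - ck})$, hence $o(\P(N \geq k))$ by Theorem \ref{theorem 1}. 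Therefore $\P(\mathcal{C}_\e \mid N\geq k)\to 0$, and \eqref{011} yields $\P(\mathcal{D}_\e \mid N \geq k) \to 1$.
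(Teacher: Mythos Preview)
Your derivation of \eqref{012} and \eqref{013} from \eqref{011} via first-moment bounds on $\P(\cD_\e)$ and $\P(\cC_\e)$ is correct and matches the paper's argument essentially line for line.

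The gap is entirely in your plan for \eqref{011}. The ``union bound over shapes --- books, theta-graphs, mixed unions of small cliques with residual triangles'' is not what the paper does, and it is unclear how you would make such an enumeration finite or efficient: the space of graphs with $\Delta\ge k$ lying outside $\cD_\e\cup\cC_\e$ is enormous, and there is no canonical finite list of ``shapes''. The paper's approach is structurally different. It decomposes the triangle-supporting subgraph into its connected components, each a \emph{triangle-induced graph} spanned by $\ell_i$ triangles with $\sum\ell_i=k$, and applies the BK inequality to factor $\P(E_{\ell_1,\dots,\ell_m})\le \P(E_{1,\dots,1})\prod_{i>j}\P(F_{\ell_i})$. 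The crucial input is then a uniform bound $\P(F_\ell)\le n^{-(1-\eta)h(\ell)}$ where $h$ is the (concave) inverse of $x\mapsto \frac{\sqrt 2}{3}(x+1)^{3/2}+3(x+1)$; concavity of $h$ reduces the whole sum to a one-parameter optimization in $j$ (the number of isolated triangles), whose maximizer is forced to $j\approx k$ or $j\approx 0$, with intermediate $j$ strictly suboptimal. This is how the paper rules out the intermediate configurations --- not by enumerating them, but by absorbing them into a single scalar variational problem.

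There is also a second step you are missing entirely: the above argument only gives $\cD_\e\cup\cT_\e$ with $\cT_\e=\{$some connected TIG carries $(1-2\e)k$ triangles$\}$. Going from $\cT_\e$ to $\cC_\e$ requires a \emph{stability} version of Kruskal--Katona (Proposition~\ref{equality holder} in the paper): one shows the large TIG must have $|E|-|V|\le b k^{2/3}$ with high conditional probability, extracts a dense core via Lemma~\ref{lemma 4.3}, shows that core has $\approx k$ triangles on $\approx h(k)$ edges, and then invokes stability to conclude it is near a clique of the right size. Your outline does not contain this passage.
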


Thus, the above states that in the sub-critical regime, it is likely that, conditional on $\{N\ge k\}$, one sees the appearance of almost $k$ {disjoint} triangles, while beyond that, an almost clique structure accounts for most of the excess triangles.

\begin{remark}\label{supercrit}
A few remarks are in order at this point. Note that the results for the super-critical regime in the above theorems 
essentially assume $k\le n^{1/10},$ whereas it should ideally work for $k=O(n^{3}).$ This technical condition is an artifact of our proof and allows certain union bound arguments to work. While it is possible to push this with a more efficient union bound scheme, for expository purposes, we made no such attempt, considering the central problem we were aiming to solve was to find the threshold until which the Poisson statistics remain valid.

{However, during the writing of the paper, a preprint \cite{cvh} was posted on the arXiv, which proves (see Theorem  1.2  therein), among other things, one half of our Theorem \ref{theorem 1} for a slightly different range of $k$. In particular, it is shown that the supercritical behavior, i.e., \eqref{cliq}, holds for $k$ growing faster than  $(\log n)^3$ (see \cite[Remark 1.3]{cvh} for details).  Note that the critical location in Theorem \ref{theorem 1} is given by $k^{1/3}\log k \approx (\frac{3}{\sqrt{2}})^{2/3}\log n$ \footnote{$\approx$ will be informally used throughout the article to denote 	`close to' in a sense whose meaning might change across locations depending on the context. We will refrain from being more precise.} which implies $k \approx (\frac{\log n}{\log \log n})^3 \ll (\log n)^3$, and so the result in \cite{cvh} falls short of going down to the critical location and further does not provide any insight into the subcritical behavior. However, on the other hand, unlike our technical condition, which is essentially $k\leq n^{\frac{1}{10}}$, there is no restriction on the upper bound for $k$ in their results.  It is worth mentioning that  the methods in \cite{cvh} are rather different from ours and rely more on refining the approach in \cite{upper_tail_localization}.}

Summarizing, while our result covers the sub-critical region and the super-critical region up to $k\le n^{1/10}$, the result in \cite{cvh} covers the super-critical region from $(\log n)^3$ onwards. 

\begin{center}
\textbf{Thus, the two results together settle the long-standing ``upper tail problem" for the number of triangles in the case where the average degree is a constant. }
\end{center}
\end{remark}

Before discussing the key proof ideas, we now provide a broad overview of the advances made on the upper tail problem in various settings over the years. 

\subsection{Previous work}  \label{review}

While the present paper considers the Erd\H{o}s-R\'enyi 
random graph with constant average degree, the problem 
was initially considered for a 
general  density $p,$ i.e., for 
the random graph $\cG({n, p}).$ 
While, eventually, counts for various subgraphs were
 studied quite closely, the initial works in this direction
  tried to pin down the tail probabilities for $N,$ the
   triangle count, as in the present paper. Thus, formally,
  the upper tail problem for $N$ asks to
    estimate the large deviation rate function given by
\[ R(n,p,\delta) := -\log \P\left(N \geq (1+\delta)\E[N]\right)\quad\mbox{ for fixed $\delta > 0$}\,.\] 
This simple to state problem turned out to be fundamental and extremely challenging, leading to intense research for over two decades, 
(see e.g., ~\cite{JR02,Vu01,KV04,JOR04,JR04,Cha12,DK12b,DK12} and~\cite{Bol,JLR} and the references therein). It followed from the works \cite{Vu01},\cite{KV04} that
\footnote{We write $f \lesssim g$ to denote $f = O(g)$; $f \asymp g$ means $f = \Theta(g)$;  $f \sim g$ means $f = (1+o(1))g$ and $f \ll g$ means $f = o(g)$.}
\[   n^2 p^2 \lesssim R(n,p,\delta)  \lesssim n^2 p^2\log(1/p)\]
(the harder direction, the lower bound, relied on the  ``polynomial
concentration'' machinery, 
whereas the 
upper bound is obtained by planting a clique, 
and observing that a set of $s \sim  \delta^{1/3} np$ vertices can form a clique with
  probability $p^{\binom{s}2}=p^{O(n^2 p^2)}$, thus contributing roughly $\delta \binom{n}{3} p^3 $ 
  extra triangles). 
  In parallel work, Chatterjee~\cite{Cha12}, and  DeMarco and Kahn~\cite{DK12} settled the sharp order of the rate function showing 
\begin{equation}\label{meanfield}
R(n,p,\delta) \asymp n^2 p^2 \log(1/p)\quad \text{ for }  p \ge \frac{\log n} n.
\end{equation} 
However, the methods in \cite{Cha12,DK12,DK12b} were not strong enough 
to recover the exact asymptotics of this rate
  function leading one to wonder if planting a clique is always the best strategy. 
  

Progress  in this front has subsequently witnessed an explosion kickstarted by  
the pioneering work of Chatterjee and
  Varadhan~\cite{CV11} that introduced a large
   deviation framework for $\cG_{n,p}$ in the
    \emph{dense regime} ($0<p<1$ fixed) via
     the theory
     of graphons  (see the 
     survey by Chatterjee~\cite{Cha16}). However, the above framework is not equipped to handle the  sparse
  regime ($p\to 0$), where the understanding still  remained rather
     limited until another breakthrough by Chatterjee and
      Dembo \cite{CD16}. This reduced it to a natural
       variational problem in a certain range of $p$. 
       A significant amount of activity was then devoted to, at least asymptotically, solving the variational problem. 
        This was initiated in~
        \cite{LZ-sparse} (who also considered the case
         when the triangle was replaced by a general
          clique), thereby yielding the following
           conclusion: for fixed $\delta > 0$, if $n^{-1/42} \log n \leq p = o(1)$, then
\begin{equation} \label{eq:tail-prob-K3}
R(n,p,\delta) \sim I(\delta) n^2p^2 \log(1/p)\quad\mbox{ where }\quad I(\delta)=\min \bigl\{ \tfrac12 \delta^{2/3}, \tfrac13 \delta\bigr\}\,.
\end{equation}
Thus the aforementioned clique construction
 gives the correct leading order constant if $\delta \ge 27/8$.
This was subsequently extended to general subgraphs
 beyond the case of the clique in \cite{bglz}.

However, as was indicated in \eqref{meanfield} proved in \cite{Cha12,DK12}, \eqref{eq:tail-prob-K3} is expected to hold until the much lower threshold of $p\gg \frac{\log n}{n}.$ Several important attempts were made subsequently in this direction. 
\cite{augeri1,cookdembo} pushed the result down to $p\gg \frac{1}{\sqrt n},$ where the problem undergoes a natural transition. More recently, in a breakthrough paper \cite{upper_tail_localization}, using a different approach going back to the classical work of Janson, Oleszkiewicz and
Ruci\'nski \cite{JOR04}, the upper tail problem was essentially completely solved in the regime $p\gg \frac{1}{n}.$ This includes in particular the window $\frac{1}{n}\ll p\ll \frac{\log n}{n}$ beyond the `mean-field' regime covered in \eqref{meanfield}.
It is known that in the latter regime, the triangle count $N$ behaves as a Poisson variable. 
Further, it is straightforward to verify that for a Poisson variable $X$ with mean $\mu,$
$$-\log [\P(X\ge (1+\delta)\mu)]=(1+o(1))\big((1+\delta)\log(1+\delta)-\delta \big)\mu $$
as $\mu \to \infty$. 
Now, when $\frac{1}{n}\ll p\ll \frac{\log n}{n},$ the mean triangle count $\E(N)$ does diverge to infinity as $n\to \infty$ falling in the above setup and it was indeed shown in \cite{upper_tail_localization} that in this case,
$$-\log [\P(N\ge (1+\delta)\E(N))]=(1+o(1))\big((1+\delta)\log(1+\delta)-\delta \big)\E(N),$$
(for a more detailed statement and related results, the reader is encouraged to refer to \cite[Section 8]{upper_tail_localization}).

However, none of the methods described above are equipped to handle the case of constant average degree, where $N$ behaves like a Poisson random variable with bounded mean, the object of study in the present paper.

Finally, it might be worth pointing out that while the previous works were only looking at the large deviation regime, i.e., deviation from the mean by a constant multiplicative factor, a regime which is not interesting when the mean is bounded, we focus on the ``entire" tail of the random variable $N.$

\subsection{Idea of proof}\label{iop} In this section, we highlight some of the key ideas going into our arguments. Broadly, there are two distinct kinds of ingredients that go into the proofs:  probabilistic and graph theoretical ones. 
While the probabilistic ideas mostly pertain to  the results of this article, the graph theoretical results could be of independent interest and find other applications. 

To get started, we notice that there are two natural ways for a  graph  to have $k$ triangles: either possessing $k$ vertex-disjoint triangles or a clique of size approximately $(6k)^{1/3}$.  Using a standard second moment bound, one can obtain rather sharp estimates on  the probability of  such events, which yields a lower bound for the probability $\mathbb{P}(N\geq k)$.

Most of the article is devoted to the highly delicate task of obtaining matching upper bounds and thereby establishing that these are the `only' ways,  {probabilistically speaking.}

 To accomplish this, on the event that $G_n$ has $k$ triangles, we focus on the subgraph induced by the  triangles and analyze the connected components. We call such a component a triangle induced graph, more generally, a graph $G=(V,E)$ is called
 a triangle-induced graph (to be called a $\TIG$ from here on)  if it is  a connected graph and can be obtained by taking a union of triangles (see Definition \ref{def}).
 
The main goal is to now obtain a sharp upper bound on the probability that $G_n$ contains such a $\TIG$. Equipped with the same, the van den Berg-Kesten (BK) inequality  {\cite{inequality_II}}, (which bounds the probability of disjoint occurrences of several events), then allows us to bound the probability of several disjoint {$\TIG$s}, thereby establishing the desired upper bound for  $\mathbb{P}(N\geq k)$.
\noindent
More precisely, note that  the event $\{N\geq k\}$ implies  the occurrence  of  vertex-disjoint $\TIG$s  each induced by $\ell_i$, say for $i=1,\cdots,m$, triangles with $1=\ell_1=\cdots =\ell_j < \ell_{j+1} \leq \cdots \leq \ell_m$ and 
$\ell_1+\cdots+\ell_m =  k$ (we denote this event by $E_{\ell_1,\cdots,\ell_m}$).  Also, let $F_\ell$ be the event that there exists a subgraph which is a $\TIG$ induced by $\ell$ triangles.  Then, by  BK inequality,
\begin{align}  \label{bk}
\mathbb{P}(E_{\ell_1,\cdots,\ell_m}) \leq  \mathbb{P}(E_{1,\cdots,1})  \mathbb{P}(F_{\ell_{j+1}}) \cdots   \mathbb{P}(F_{\ell_m})
\end{align}
(there are $j$ many 1s indexing $E_{1,\cdots,1}$).

First,  it is straightforward to upper bound the probability $\mathbb{P}(E_{1,\cdots,1})$  (i.e. existence of $j$ vertex-disjoint triangles). It turns out that the upper bound obtained by a naive first moment method    is indeed sharp, which can be established by computing its second moment:
\begin{align} \label{001}
\mathbb{P}(E_{1,\cdots,1}) \approx e^{-j\log j}.
\end{align}

Most of the new ideas go into the proof of the upper bound of $\mathbb{P}(F_\ell)$, which is one of our key results. In other words, we estimate the probability that  there exists a  connected subgraph, say $H$, induced by $\ell$ triangles. If $H$ has $v$ vertices and  $e$ edges, then the probability that $G_n$  contains  such an $H$ is bounded by
\begin{align*}
n^v v^{2e}   \cdot \Big(\frac{d}{n}\Big)^e = v^{2e}d^e \frac{1}{n^{e-v}},
\end{align*}
since the number of such subgraphs in $G_n$ is bounded by $n^v {v \choose 2}^e \leq n^v v^{2e}$ (the $n^{v}$ term coming from the choices of the vertices, while ${v \choose 2}^e$ bounds the number of choices for edges).\\

The proof now hinges on a crucial graph-theoretical ingredient which we prove in Section \ref{section 5}, which states that $e-v$ can be lower bounded in terms of the number of triangles. In other words, we deduce that there exists an `almost' (we will not make this notion precise in this discussion) concave function $h$ with $h(y) \approx ay^{2/3}$ ($a$ is an explicit constant) such that
\begin{align*}
e-v \geq h(\ell)
\end{align*}
(The function $h$ is essentially the inverse of the function appearing on the RHS of the inequality in Lemma  \ref{lemma 3.2}. Several analytic properties of the function $h$ which are exploited crucially are established in Section \ref{convexsec}.)
The next step implements an efficient union bound scheme over all possible subgraphs $H$ induced by $\ell$ triangles. Without going into details, let us just mention that this step relies on establishing that any such $H$ must admit a further `dense' subgraph (where the number of edges is proportional to the square of the number of vertices) which accounts for most of the triangles. 

Putting the above together, we essentially establish that
\begin{align} \label{002}
\mathbb{P}(F_\ell) \leq   \Big(\frac{1}{n}\Big)^{ h(\ell) }
\end{align} 
(see Lemma  \ref{lemma 3.3} for a precise statement).
It might be worth mentioning that the recent preprints \cite{andreis21homo,andreis21} study the somewhat related notion of  large deviation properties of the connected components in sparse random graphs.

Therefore,  applying \eqref{001} and \eqref{002} to \eqref{bk}, we approximately have an upper bound
\begin{align} \label{004}
\mathbb{P}(E_{\ell_1,\cdots,\ell_m}) \leq  e^{-j\log j}   \Big(\frac{1}{n}\Big)^{h(\ell_{j+1} )+ \cdots  + h(\ell_m) }.
\end{align}
By concavity properties of $h$, this quantity  is  bounded by
\begin{align} \label{003}
 e^{-j\log j}   \Big(\frac{1}{n}\Big)^{h(\ell_{j+1} +\cdots+\ell_m) } =  e^{-j\log j } \Big(\frac{1}{n}\Big)^{h(k-j)} \approx  e^{-j\log j } \Big(\frac{1}{n}\Big)^{ a(k-j)^{2/3} }.
\end{align}
where the last approximate equality is obtained using the aforementioned asymptotics of the function $h.$

Towards analyzing the bound in \eqref{003}, let us define   a function $\phi(j) :=  e^{-j\log j } n^{ -a(k-j)^{2/3} }$. Then, by the above,
\begin{align} \label{005}
\mathbb{P}(E_{\ell_1,\cdots,\ell_m}) \leq  \phi(j)
\end{align}
(recall that there are $j$ 1s in $\ell_1,\cdots,\ell_m$).
It turns out, and is not difficult to check, that the  function $\phi$ exhibits the following transition.
\begin{align} \label{006}
\begin{cases}
k^{1/3}\log k < a\log n \  \Rightarrow  \ \phi(j) \ \text{attains the maximum at} \ j\approx k \\
k^{1/3}\log k > a\log n \  \Rightarrow \  \phi(j) \ \text{attains the maximum at} \ j\approx 0.
\end{cases}
\end{align}
Using \eqref{005}, we obtain an upper bound for  $\mathbb{P}(E_{\ell_1,\cdots,\ell_m})  $ by taking a union bound over  all possible tuples $(\ell_1,\cdots,\ell_m)$  such that $\ell_1\leq \cdots \leq \ell_m$ and  $\ell_1+\cdots+\ell_m =  k$. Since by \cite{partition} the number of such partitions grows like $e^{\sqrt{k}},$ essentially the bound in \eqref{005} prevails.

The aforementioned argument also allows us to prove Theorem \ref{theorem 2}.
In the sub-critical regime, by \eqref{006}, conditioned on possessing $k$ triangles, it is likely  to have `almost' $k$ vertex-disjoint triangles, whereas, in the super-critical regime, with high probability, $j\approx 0$ and thus $\ell_{j+1} + \cdots + \ell_m \approx k$. Further,  the application of Jensen's inequality in \eqref{003} is sharp if and only if $\ell_m \approx k$ and other $\ell_i$s are negligible.  This implies that  with high probability conditioned on $\{N\geq k\}$, there exists a $\TIG$ induced by $k'\approx k$ triangles.

At this point another crucial graph theoretic result is proven. Namely, that  conditioned on a $\TIG$ having $k'$ triangles, it is likely that this component contains an approximate clique accounting for almost $k'$ triangles. We will not elaborate on the proof in this discussion beyond mentioning that this involves proving a structure theorem quantifying when the following well known inequality (known as Kruskal-Katona bound, see e.g. \cite{alon}), bounding the number of triangles in any graph in terms of the number of edges,
$$\Delta \leq \frac{\sqrt{2}}{3}|E|^{3/2},$$  is almost sharp ({it is a straightforward computation to check that, ignoring lower order terms, equality holds when the graph is a clique}).

Though it has been pointed out to us, after the completion of the paper, by Noga Alon that a similar structural result had already been proved quite a few years back in \cite{keevash08}, we decided to keep the proof, since it seems to be somewhat different from the previous one, borrows ideas from spectral graph theory and could be of independent interest. 

\subsection{Organization of paper}
{The rest of the paper is organized as follows. In Section \ref{seclower},  we deduce lower bounds for $\mathbb{P}(N \geq k)$. In Section \ref{section 5}, we introduce and prove several  graph theoretical results that have been already alluded to in Section \ref{iop}.  In Section  \ref{convexsec},  we  establish a sharp upper bound on the probability that  $G_n$ contains a subgraph which is a $\TIG$ induced by $\ell$ triangles (i.e. a precise version of \eqref{002}). Using this combined with a technical convexity argument,  in Section \ref{secupper} we proceed with the details of \eqref{004}-\eqref{006} and deduce a matching upper bound on $\mathbb{P}(N \geq k)$. Finally,  in Section \ref{secstructure}, we prove Theorem \ref{theorem 2}.}

Before embarking on the proofs, we remark that, adopting standard practice, often in the proofs, we will use the same constant, say $C$, whose value might change from line to line.

\subsection{Acknowledgement}
S.G. thanks Sourav Chatterjee for mentioning to him the question of studying the upper tail of $N$, which he had learnt from David Aldous. S.G. also thanks Noga Alon for pointing him to \cite{keevash08}. His research was partially supported by NSF grant DMS-1855688, NSF CAREER grant DMS-1945172, and a Sloan Fellowship. E.H. was partially supported by NSF grant DMS-1855688.

\section{Lower bound}\label{seclower}
We start by establishing lower bounds for the tail probability of $N$ by analyzing two specific events, namely having $k$ disjoint triangles and having a clique of size approximately {$(6k)^{1/3}.$}
Let us introduce the following parameters which will recur throughout the article,
\begin{align} \label{d}
d_0:= \frac{d^3}{6},\qquad d_1:= \frac{ed^3}{6}.
\end{align}
To get started, let $H_k$ be the graph of size $3k$ vertices formed by $k$ disjoint triangles.

\begin{lemma}  \label{lower1}
There is a constant $c>0$ {depending only on $d$}, such that the following holds.
Suppose that $k\rightarrow \infty$ and $\frac{k^2}{n}\rightarrow 0$ as $n\rightarrow\infty$.   Then,  for large enough $n$,
\begin{align*}
\mathbb{P} (G_n \  \textup{contains}  \  H_k ) \geq  e^{-k\log k  - ck}.
\end{align*}
\end{lemma}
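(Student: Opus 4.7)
Let $X$ be the number of unordered $k$-tuples of vertex-disjoint triangles contained in $G_n$, so that $\{X \geq 1\} = \{G_n \supseteq H_k\}$. The plan is to apply the Paley--Zygmund inequality $\mathbb{P}(X \geq 1) \geq (\mathbb{E}[X])^2/\mathbb{E}[X^2]$, which reduces the problem to (a) lower-bounding $\mathbb{E}[X]$ by $e^{-k\log k - c_0 k}$, and (b) upper-bounding $\mathbb{E}[X^2]$ by $e^{O(k)} \cdot \mathbb{E}[X]$ with the implicit constant depending only on $d$.

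For (a), direct enumeration gives $\mathbb{E}[X] = \frac{n(n-1)\cdots(n-3k+1)}{6^k\, k!}(d/n)^{3k}$. Under the hypothesis $k^2/n \to 0$, the falling factorial is at least $(1-o(1))n^{3k}$, so $\mathbb{E}[X] \geq (1-o(1))\, d_0^k/k!$, and Stirling then delivers the claim.

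For (b), fix any configuration $A_0$ of $k$ vertex-disjoint triangles. By symmetry over the choice of $A_0$,
\[
\frac{\mathbb{E}[X^2]}{\mathbb{E}[X]} \;=\; \sum_B \Bigl(\frac{d}{n}\Bigr)^{3k - m(A_0,B)}, \qquad m(A_0,B) := |E(A_0)\cap E(B)|,
\]
where $B$ runs over all configurations. I would parametrize each $B$ by the pair $(j,i)$, with $j$ the number of $A_0$-triangles that also appear in $B$, and $i$ the number of $A_0$-triangles that share exactly one edge with some triangle of $B$. A short case analysis, using that two triangles sharing two edges must coincide and that vertex-disjointness within $B$ forces at most one $B$-triangle to share edges with any given $A_0$-triangle, shows these are the only sharing patterns, so $m = 3j + i$. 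Upper bounding each stratum by letting the remaining $k-j-i$ ``free'' triangles of $B$ range over all unordered triples then yields
\[
\sum_B \Bigl(\frac{d}{n}\Bigr)^{3k - m} \;\leq\; \sum_{j,i}\binom{k}{j}\binom{k-j}{i}(3n)^i \cdot \frac{n^{3(k-j-i)}}{6^{k-j-i}(k-j-i)!}\cdot \Bigl(\frac{d}{n}\Bigr)^{3k-3j-i}.
\]

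The net power of $n$ in each summand simplifies to $n^{-i}$. For fixed $j$, the $i \geq 1$ contribution is bounded by a convergent exponential series in $(k-j)^2/(nd)$ (via the crude estimate $\binom{k-j}{i}(k-j)!/(k-j-i)! \leq (k-j)^{2i}/i!$), which is $o(1)$ since $k^2/n \to 0$; so these strata contribute only a harmless $1+o(1)$ factor. The remaining $i = 0$ sum is $\sum_{j=0}^k\binom{k}{j} d_0^{k-j}/(k-j)!$ which, using $\binom{k}{\ell} \leq k^\ell/\ell!$, is bounded by $\sum_{\ell\geq 0}(kd_0)^\ell/(\ell!)^2 \leq e^{kd_0}$. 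Assembling everything and applying Paley--Zygmund gives $\mathbb{P}(X\geq 1) \geq \mathbb{E}[X]\cdot e^{-kd_0 - o(1)} \geq e^{-k\log k - ck}$ for a suitable $c = c(d)$. The main difficulty is the overlap combinatorics in step (b)---namely, verifying that $(j,i)$ captures every possible shared-edge configuration and that the deliberate over-counting of ``free'' triangles nonetheless produces a valid upper bound on the original sum $\sum_B (d/n)^{3k - m(A_0,B)}$.
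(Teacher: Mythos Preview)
Your proof is correct, and the overlap parametrisation $(j,i)$ is sound: since any two edges of a triangle share a vertex, vertex-disjointness within $B$ (and symmetrically within $A_0$) forces a partial matching between $A_0$-triangles and $B$-triangles, each matched pair sharing either $1$ or $3$ edges, so $m=3j+i$ exactly. Your over-counting in each stratum is harmless because every genuine $B$ is counted at least once in its own stratum with the correct weight $(d/n)^{3k-3j(B)-i(B)}$, and all spurious contributions are non-negative. The arithmetic (net power $n^{-i}$, the exponential series in $(k-j)^2/(nd)$, and the crude bound $\sum_\ell (kd_0)^\ell/(\ell!)^2 \le e^{kd_0}$) is fine; in fact the last sum is only $e^{O(\sqrt{k})}$, but $e^{O(k)}$ is all you need.

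The paper takes a genuinely different route for the second-moment step. Instead of the raw count, it lets $X$ count only those copies of $H_k$ whose $3k$ vertices are \emph{isolated} in $G_n$ (i.e.\ have no edges outside the configuration). This costs an extra factor $(1-d/n)^{3k(n-3k)+O(k^2)} = e^{-O(k)}$ in $\mathbb{E}[X]$, which is absorbed into the constant $c$. The payoff is that partial overlaps disappear entirely: if two isolated configurations share a vertex $v$, then the two edges at $v$ in one configuration must coincide with the two edges at $v$ in the other, forcing the two triangles through $v$ to coincide. Hence any two overlapping isolated configurations share some whole triangles and are otherwise vertex-disjoint, and the second moment reduces to a single sum over the number of shared triangles. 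Your approach avoids this trick at the price of the $(j,i)$ case analysis; the paper's approach buys a cleaner second-moment computation at the price of a slightly artificial random variable. Both lead to the same conclusion.
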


\begin{proof}

Let $X$ be the number of subgraphs $H_k$ in $G_n,$ which in addition satisfy that vertices in $H_k$ have no other edges. In other words,
{
\begin{align*}
X = \sum_{T_1,\cdots,T_k \text{ disjoint triangles}} \mathbf{1} ( & \text{edges are  present in} \  T_1, \dots, T_k,  \\
&   \hspace{1cm} \text{all vertices in} \  T_1\cup \dots \cup T_k \  \text{have no other edges}).
\end{align*} 
}
Note that there are $ \frac{1}{k!} {n \choose 3} {n-3 \choose 3} \cdots {n-3k+3 \choose 3}$ terms in the summation. 
Thus, for any $\e>0$, for  large enough $k$, 
\begin{align} \label{first1}
\mathbb{E} X & =  \frac{1}{k!} {n \choose 3} {n-3 \choose 3} \cdots {n-3k+3 \choose 3}    \Big(\frac{d}{n}\Big)^{3k} \Big(1-\frac{d}{n}\Big)^{3k(n-3k) + \frac{3k(3k-1)}{2} - 3k }   \\
& = {\frac{1}{k!} \Big(\frac{d^3}{6}\Big)^k   \frac{n(n-1)\cdots (n-3k+1)}{n^{3k}} \Big(1-\frac{d}{n}\Big)^{3k(n-3k) + \frac{3k(3k-1)}{2} - 3k } }  \nonumber \\
& \geq {\frac{1}{k!} \Big(\frac{d^3}{6}\Big)^k  \Big(1-\frac{3k}{n}\Big)^{3k} \Big(1-\frac{d}{n}\Big)^{3kn }} \geq  \frac{1}{k!} \Big(\frac{d^3}{6}\Big)^k e^{-(3+\e) dk}. \nonumber
\end{align}
{(We used $\lim_{n\rightarrow \infty} (1-\frac{1}{n})^n = e^{-1}$ and  the condition   $\frac{k^2}{n}\rightarrow 0$ to deduce the last inequality.)}
 Recall that by Stirling's formula,
 \begin{align}\label{stirling}
 \Big(\frac{k}{e}\Big)^k <  k!  <  C \sqrt{k}\Big (\frac{k}{e}\Big)^k
 \end{align}
 Thus, further using that $d_1=\frac{ed^3}{6}$ from \eqref{d},
\begin{align} \label{first}
\mathbb{E}X \geq  C \frac{1}{\sqrt{k}} \frac{d_1^k}{k^k}e^{-(3+\e) dk} = C e^{-k\log k + (\log d_1- (3+\e)d ) k - \frac{1}{2}\log k}.
\end{align}

Let us now compute the second moment of $X$.  {Since we require in the definition of $X$ that the vertices of the $k$ triangles have no other edges, computing the second moment of $X$  involves considering the following situation: two sets of $k$  triangles share some triangles and are otherwise vertex-disjoint.  The contribution from such pairs of subgraphs $H_k$s} sharing
 $k-\ell$ triangles for $\ell = 0,1,\cdots,k$ is given by
\begin{align*}
\sum_{\ell=0}^k &  \frac{1}{(k+\ell)!} {n \choose 3} {n-3 \choose 3} \cdots {n-3(k+\ell)+3 \choose 3}  {k+\ell \choose k} {k \choose \ell}     \\
&\cdot  \Big(\frac{d}{n}\Big)^{3(k+\ell)  } \Big(1-\frac{d}{n}\Big)^{3(k+\ell)(n-3(k+\ell)) + \frac{3(k+\ell)(3(k+\ell)-1)}{2} - 3(k+\ell)  }   .
\end{align*}
Above, the product of the 
$k+\ell$ many binomial ${n-3i \choose 3}$ terms, along with the $\frac{1}{(k+\ell)!}$, comes from choosing $k+\ell$ disjoint triangles. The $ {k+\ell \choose k} {k \choose \ell}  $ term is exactly the number of possible ways to choose  two subgraphs $H_k$s sharing $k-\ell$ triangles. Since we require that vertices in $k+\ell$ triangles cannot have other edges, we obtain the last two terms.

Note that the $\ell=0$ term is simply $\mathbb{E}X$. To bound each term,  since $\ell\leq k$ and $\frac{k^2}{n}\rightarrow 0$,  for any $\e>0$,  for large enough $n$,
\begin{align*}
\Big(1-\frac{d}{n}\Big)^{3(k+\ell)(n-3(k+\ell)) + \frac{3(k+\ell)(3(k+\ell)-1)}{2} - 3(k+\ell)  } \leq \Big(1-\frac{d}{n}\Big)^{3(k+\ell)n - O(k^2)} \leq    e^{-(3-\e) d(k+\ell)}.
\end{align*}
Also, note that $ {k+\ell \choose k} \leq 2^{k+\ell}$ and $   {k \choose \ell}  \leq 2^k$. 
Thus, for $\ell \geq 1$, using Stirling's formula \eqref{stirling},
 each term in the summation above is bounded by
 \begin{align*}
   \frac{1}{(k+\ell)!} \Big(\frac{d^3}{6}\Big)^{k+\ell}  e^{-(3-\e) d(k+\ell)} 2^{k+\ell} 2^k \leq  \Big(\frac{d_1}{k+\ell}\Big)^{k+\ell}   e^{-(3-\e) d(k+\ell)} 2^{2k+\ell}  
\end{align*}
(recall that $d_1 = \frac{ed^3}{6}$).
 Using \eqref{first}, the above quantity  is bounded by $  a_\ell \mathbb{E}X$,
where
\begin{align*}
a_\ell & = \Big  (\frac{d_1}{k+\ell}\Big)^{k+\ell}   e^{-(3-\e) d(k+\ell)} 2^{2k+\ell}\Big  (C \frac{1}{\sqrt{k}} \frac{d_1^k}{k^k}e^{-(3+\e) dk} \Big )^{-1}  \\
&= C^{-1} \frac{k^k}{(k+\ell)^{k+\ell}} d_1^\ell  e^{2\e dk} e^{-(3-\e)d\ell} 2^{2k+\ell}  \sqrt{k} \\
& \leq C  \frac{1}{(k+\ell)^\ell}    (d_1 e^{2\e d} 2^2 )^{k+\ell}  \sqrt{k}  \leq   C^{k+\ell} \frac{ \sqrt{k} }{(k+\ell)^\ell}  \leq  C^k \Big  (\frac{C}{k+\ell} \Big )^\ell
\end{align*}
($C>1$ is a constant depending on $d$ whose value above changes from line to line). Thus, for a large enough $k$,
\begin{align*}
\sum_{\ell=0}^k a_\ell    \leq  (k+1) C^k <C^{2k}.
\end{align*}
Hence,
\begin{align*}
\mathbb{E}X^2 \leq C^{2k} \mathbb{E}X. 
\end{align*} 
Therefore, since
\begin{align*}
\mathbb{P}(X \geq 1) \geq \frac{(E X)^2}{EX^2 }\geq  C^{-2k} \mathbb{E}X
\end{align*}
combined with \eqref{first}, we conclude the proof.

\end{proof}

{
\begin{remark} \label{triangle upper}
For later purposes we record an analogous upper bound using the first moment method. The expectation of the number of subgraphs $H_k$ is  
\begin{align} \label{first2}
\frac{1}{k!} {n \choose 3} {n-3 \choose 3} \cdots {n-3k+3 \choose 3}    \Big(\frac{d}{n}\Big)^{3k}  
\end{align}
Note that this is nothing other than the quantity  \eqref{first1}    in the previous lemma, without the power of $1-\frac{d}{n}$  term. This is because we no longer require vertices in $H_k$ to have no other edges. The quantity \eqref{first2}
is bounded by
\begin{align*}
 \frac{1}{k!} \Big(\frac{d^3}{6} \Big )^k   \frac{n(n-1)\cdots (n-3k+1)}{n^{3k}} \leq  \frac{1}{k!}\Big(\frac{d^3}{6}\Big )^k \leq e^{-k\log k + c'k}
\end{align*}
for some constant $c'>0$. Hence,  
\begin{align}
\mathbb{P} (G_n \  \textup{contains}  \  H_k ) \leq  e^{-k\log k  + c'k}.
\end{align}

\end{remark}
}

The second lower bound is obtained by planting a clique.
For a positive integer $k$, let $K_k$ be a clique of size $k$. The following lemma is rather straightforward and we omit its proof.

\begin{lemma} \label{lower2}

For any $n$ and $k$,
 \begin{align*}
\mathbb{P} (G_n \  \textup{contains}  \  K_k ) \ \geq  \Big(\frac{d}{n}\Big)^{ {k \choose 2} } .
\end{align*}
In particular,  for any $\e>0$,  for large enough $n$ and $k$, 
\begin{align*}
\mathbb{P}(G_n \  \textup{contains a clique of  size at least}   \ (6k)^{1/3}    ) \geq  n^{ -(1+\e)  (\frac{3}{\sqrt{2}})^{2/3} k^{2/3}}.
\end{align*}
\end{lemma}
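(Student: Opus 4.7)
The plan is to prove the first inequality by a direct one-line calculation and then derive the second inequality by specializing the first to $k$ replaced by $\lceil(6k)^{1/3}\rceil$ and simplifying the exponent.

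For the first inequality, I would fix any set $S\subseteq V_n$ of $k$ vertices (e.g.\ $S=\{1,\ldots,k\}$) and observe that $G_n$ contains $K_k$ whenever all $\binom{k}{2}$ potential edges inside $S$ are present. Since each such edge appears independently with probability $d/n$, the probability of this particular planted clique is exactly $(d/n)^{\binom{k}{2}}$, which gives the stated lower bound (a single set $S$ is enough; no union bound or second moment is needed for a lower bound).

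For the second inequality, let $m:=\lceil(6k)^{1/3}\rceil$. Applying the first part with $k$ replaced by $m$ gives
\begin{align*}
\mathbb{P}(G_n \text{ contains a clique of size } m) \;\geq\; \Big(\frac{d}{n}\Big)^{\binom{m}{2}}.
\end{align*}
The key arithmetic fact is the identity
\begin{align*}
\Big(\frac{3}{\sqrt{2}}\Big)^{2/3} \;=\; \frac{3^{2/3}}{2^{1/3}} \;=\; \frac{6^{2/3}}{2},
\end{align*}
so that $\binom{m}{2}=\frac{m^2}{2}(1+o(1)) = \frac{6^{2/3}}{2}k^{2/3}(1+o(1)) = (\frac{3}{\sqrt{2}})^{2/3}k^{2/3}(1+o(1))$ as $k\to\infty$. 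Consequently,
\begin{align*}
\Big(\frac{d}{n}\Big)^{\binom{m}{2}} \;=\; \exp\!\Big(-\binom{m}{2}\log n + \binom{m}{2}\log d\Big).
\end{align*}
Since $\binom{m}{2}\log d = O(k^{2/3})$ is negligible against $\varepsilon\binom{m}{2}\log n$ once $n$ is sufficiently large (note $k$ can be taken large so that $m\to\infty$), and the relative error $1+o(1)$ in $\binom{m}{2}$ is also absorbed into the factor $(1+\varepsilon)$, we obtain for all sufficiently large $n$ and $k$,
\begin{align*}
\Big(\frac{d}{n}\Big)^{\binom{m}{2}} \;\geq\; n^{-(1+\varepsilon)(\frac{3}{\sqrt{2}})^{2/3}k^{2/3}},
\end{align*}
which is the desired bound.

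There is essentially no obstacle here: the proof is purely bookkeeping once one notices the numerical identity $(3/\sqrt{2})^{2/3}=6^{2/3}/2$, which is precisely what makes a clique on $(6k)^{1/3}$ vertices the natural competitor with the disjoint-triangles construction of Lemma~\ref{lower1}. The only minor care is to absorb the lower-order contributions (the $\log d$ factor and the $\lceil\cdot\rceil$ rounding) into the $\varepsilon$ slack, which is immediate for large $n,k$.
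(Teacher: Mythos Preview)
Your proof is correct and is exactly the straightforward argument the paper has in mind; indeed, the paper explicitly omits the proof as ``rather straightforward,'' and your planting-a-single-clique computation together with the arithmetic identity $(3/\sqrt{2})^{2/3}=6^{2/3}/2$ is precisely what is needed.
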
 

Therefore, combined with Lemma \ref{lower1}, we have the following result.

\begin{proposition} \label{lower}
There is a constant $c>0$, {depending only on $d$}, such that the following holds.
Suppose that $k\rightarrow \infty$ and $\frac{k^2}{n}\rightarrow 0$ as $n\rightarrow\infty$.  Then, for any $\e>0$, for large enough $n$,
\begin{align*}
\mathbb{P}(N \geq k ) \geq \max(e^{-k\log k - ck} ,  n^{-(1+\e)  (\frac{3}{\sqrt{2}})^{2/3} k^{2/3}} ).
\end{align*}
\end{proposition}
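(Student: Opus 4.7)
The plan is to combine the two preceding lemmas by noting that the event $\{N \geq k\}$ is implied by either of the graph configurations already analyzed there. Since the graph $H_k$ consists of exactly $k$ vertex-disjoint triangles, one has $\{G_n \supseteq H_k\} \subseteq \{N \geq k\}$, so Lemma \ref{lower1} immediately yields $\mathbb{P}(N \geq k) \geq e^{-k\log k - ck}$ under the hypothesis $k^2/n \to 0$.

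For the clique bound, I would fix a small auxiliary parameter $\eta > 0$ (to be tuned against $\e$) and set $m := \lceil (1+\eta)(6k)^{1/3} \rceil$. Since
\begin{equation*}
\binom{m}{3} = \frac{m(m-1)(m-2)}{6} \geq (1+\eta)^3 k (1-o(1)) \geq k
\end{equation*}
for $k$ large, the presence of $K_m$ in $G_n$ forces $N \geq k$. Thus by Lemma \ref{lower2},
\begin{equation*}
\mathbb{P}(N \geq k) \;\geq\; \mathbb{P}(G_n \text{ contains } K_m) \;\geq\; \Big(\frac{d}{n}\Big)^{\binom{m}{2}}.
\end{equation*}

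Next I would translate the right-hand side into the advertised form. Writing $\binom{m}{2} \leq (1+o(1))\frac{(1+\eta)^2 (6k)^{2/3}}{2}$, using the elementary identity $(\tfrac{3}{\sqrt{2}})^{2/3} = \tfrac{6^{2/3}}{2}$, and noting that $(d/n)^{\binom{m}{2}} = n^{-\binom{m}{2}(1 - \log d / \log n)}$ with $\log d / \log n \to 0$, one obtains
\begin{equation*}
\mathbb{P}(N \geq k) \;\geq\; n^{-(1+o(1))(1+\eta)^2 (\tfrac{3}{\sqrt{2}})^{2/3} k^{2/3}}.
\end{equation*}
Choosing $\eta$ small enough that $(1+\eta)^2 < 1 + \e/2$ and then taking $n$ large to absorb the $o(1)$ factor produces the desired second lower bound, and taking the maximum with the first bound concludes the proof. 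I do not anticipate any real obstacle, since both ingredients are already proven; the only care needed is in the slack built into the choice of $m$ to simultaneously absorb the ceiling, the lower-order terms in $\binom{m}{2}$, and the $d$-dependent prefactor in $(d/n)^{\binom{m}{2}}$.
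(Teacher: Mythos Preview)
Your proposal is correct and follows the same strategy as the paper, which simply records the proposition as an immediate consequence of Lemmas~\ref{lower1} and~\ref{lower2}. Your treatment is in fact slightly more careful than the paper's: the second part of Lemma~\ref{lower2} is phrased in terms of a clique of size $(6k)^{1/3}$, which strictly speaking has fewer than $k$ triangles, whereas you explicitly introduce the slack $(1+\eta)$ to guarantee $\binom{m}{3}\ge k$ and then absorb it into the $\varepsilon$.
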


A simple calculation now shows that 
if $k^{1/3} \log k< ((\frac{3}{\sqrt{2}})^{2/3} - \delta) \log n$, then
\begin{align*}
e^{-k\log k - ck}>n^{-(1+\e)  (\frac{3}{\sqrt{2}})^{2/3} k^{2/3}},
\end{align*}
whereas,
if $k^{1/3} \log k> ((\frac{3}{\sqrt{2}})^{2/3} +\delta) \log n$ instead, then for small enough $\e>0$,
\begin{align*}
e^{-k\log k - ck}<n^{-(1+\e)  (\frac{3}{\sqrt{2}})^{2/3} k^{2/3}} .
\end{align*}
 
\section{Graph theoretic results}
 \label{section 5}
 As indicated in Section \ref{iop}, the proofs throughout the  article relies on key graph theoretic results, some of which we believe to be of independent interest, potentially having other applications.  
In this section, we record and prove all such results.

{From now on, for any subgraph $H$ of $G$, define $V(H)$ and $E(H)$ to be the set of vertices and edges of $H$ respectively.}  
Recall also that $\Delta(G)$ denotes the number of triangles  in $G$. To alleviate the notation, we simply use the notation $\Delta$ when $G$ is clear from the context.
Also, for any graph (which will be clear from the context), we denote the corresponding graph metric by $d(\cdot,\cdot).$

We start with  the following well known Kruskal-Katona bound on the number of triangles in a graph in terms of the number of edges. The proof is an application of  H\"older's inequality.  
\begin{lemma}\cite[Lemma 2.2]{bdm},\cite{alon} \label{lemmaholder}
For any graph $G=(V,E)$,
\begin{align}\label{holder}
\Delta \leq \frac{\sqrt{2}}{3}|E|^{3/2}.
\end{align}
\end{lemma}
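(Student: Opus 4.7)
The plan is to express the triangle count as a symmetric three-index sum over the adjacency matrix and then apply a two-fold Cauchy--Schwarz (which is the content of H\"older's inequality in this tripartite setting). Writing $a_{uv} = \mathbf{1}\{uv \in E\}$ for the entries of the adjacency matrix, each unordered triangle $\{u,v,w\}$ contributes exactly $3! = 6$ to the ordered sum $\sum_{u,v,w \in V} a_{uv} a_{vw} a_{uw}$, so
\begin{equation*}
6\Delta \;=\; \sum_{u,v,w \in V} a_{uv}\, a_{vw}\, a_{uw}.
\end{equation*}

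Next I would prove (or invoke) the standard ``triangle H\"older'' inequality: for any real arrays $f,g,h \colon V \times V \to \mathbb{R}$,
\begin{equation*}
\sum_{u,v,w \in V} f(u,v)\, g(v,w)\, h(u,w) \;\leq\; \|f\|_2\, \|g\|_2\, \|h\|_2,
\end{equation*}
where $\|f\|_2^2 = \sum_{u,v} f(u,v)^2$. This follows by Cauchy--Schwarz applied twice: first bounding $\sum_{i,k} h(i,k)\, (f \star g)(i,k) \leq \|h\|_2 \|f \star g\|_2$ with $(f \star g)(i,k) := \sum_j f(i,j) g(j,k)$, and then using Cauchy--Schwarz on the inner sum defining $(f \star g)(i,k)$ to conclude $\|f \star g\|_2 \leq \|f\|_2 \|g\|_2$.

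Applying this inequality with $f = g = h = a$, and noting that $\|a\|_2^2 = \sum_{u,v} \mathbf{1}\{uv \in E\} = 2|E|$ (since each edge is counted once in each orientation), we obtain
\begin{equation*}
6\Delta \;\leq\; \|a\|_2^{\,3} \;=\; (2|E|)^{3/2} \;=\; 2\sqrt{2}\,|E|^{3/2},
\end{equation*}
and dividing by $6$ yields the claimed bound $\Delta \leq \frac{\sqrt{2}}{3} |E|^{3/2}$. There is no real obstacle here: the argument is entirely elementary, the only subtle point being the correct two-fold application of Cauchy--Schwarz rather than a naive H\"older with equal exponents $(3,3,3)$ over three indices, which would produce spurious factors of $|V|$ and fail to give the sharp constant.
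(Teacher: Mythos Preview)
Your proof is correct and matches the approach indicated by the paper, which does not spell out an argument but simply remarks that the bound follows from H\"older's inequality and cites \cite[Lemma 2.2]{bdm} and \cite{alon}. Your two-step Cauchy--Schwarz argument is precisely the standard realization of that H\"older application.
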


We next restate the following crucial definition from Section \ref{section 1}.
 
 \begin{definition} \label{def}
A graph $G=(V,E)$ is called
 a triangle-induced graph (to be called a $\TIG$ from here on)  if it is  a connected graph and can be obtained by taking a union of triangles. 
 We say that a $\TIG$  $G=(V,E)$ is spanned by $\ell$ triangles if  there exist {(distinct)} triangles $T_1,\cdots,T_\ell$ such that {$V = \cup_i V(T_i) $ and $E = \cup_i E(T_i)$}.
\end{definition}

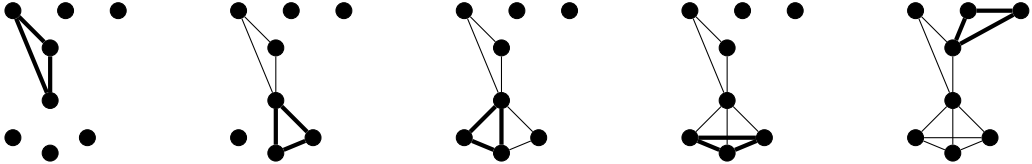
\begin{figure}[H]
\captionsetup{font=small}
\begin{tikzpicture}[node distance=0.7cm, scale = 1, font = \tiny]
\tikzstyle{every node}=[draw, shape=circle, minimum size = 0.1cm, inner sep=0, fill = black]

  \node (1) {1};
  \node (2)  [below right of=1] {2};
  \node (3) [below of =2] {3};
  \node (4) [below of =3] {4};
  \node (5) [below right of =3] {5};
  \node (6) [below left of =3] {6};
  \node (7) [right of = 1] {7};
  \node (8) [right of = 7] {8};

\path[every node/.style={font=\sffamily\small}]
    (1) edge[ultra thick] node {} (2)
    (2) edge[ultra thick] node {} (3)
    (3) edge[ultra thick] node {} (1);

\begin{scope}[xshift=3cm]

  \node (1) {1};
  \node (2)  [below right of=1] {2};
  \node (3) [below of =2] {3};
  \node (4) [below of =3] {4};
  \node (5) [below right of =3] {5};
  \node (6) [below left of =3] {6};
  \node (7) [right of = 1] {7};
  \node (8) [right of = 7] {8};
  
\path[every node/.style={font=\sffamily\small}]
    (1) edge node {} (2)
    (2) edge node {} (3)
    (3) edge node {} (1)
    (3) edge[ultra thick] node {} (4)
    (3) edge[ultra thick] node {} (5)
    (5) edge[ultra thick] node {} (4);

\end{scope}

\begin{scope}[xshift=6cm]

  \node (1) {1};
  \node (2)  [below right of=1] {2};
  \node (3) [below of =2] {3};
  \node (4) [below of =3] {4};
  \node (5) [below right of =3] {5};
  \node (6) [below left of =3] {6};
  \node (7) [right of = 1] {7};
  \node (8) [right of = 7] {8};

\path[every node/.style={font=\sffamily\small}]
    (1) edge node {} (2)
    (2) edge node {} (3)
    (3) edge node {} (1)
    (3) edge[ultra thick] node {} (4)
    (4) edge[ultra thick] node {} (6)
    (6) edge[ultra thick] node {} (3)
    (3) edge node {} (5)
    (5) edge node {} (4);

\end{scope}

\begin{scope}[xshift=9cm]

  \node (1) {1};
  \node (2)  [below right of=1] {2};
  \node (3) [below of =2] {3};
  \node (4) [below of =3] {4};
  \node (5) [below right of =3] {5};
  \node (6) [below left of =3] {6};
  \node (7) [right of = 1] {7};
  \node (8) [right of = 7] {8};

\path[every node/.style={font=\sffamily\small}]
    (1) edge node {} (2)
    (2) edge node {} (3)
    (3) edge node {} (1)
    (3) edge node {} (4)
    (4) edge[ultra thick] node {} (6)
    (6) edge node {} (3)
    (3) edge node {} (5)
    (5) edge[ultra thick] node {} (4)
    (6) edge[ultra thick] node {} (5);

\end{scope}

\begin{scope}[xshift=12cm]

  \node (1) {1};
  \node (2)  [below right of=1] {2};
  \node (3) [below of =2] {3};
  \node (4) [below of =3] {4};
  \node (5) [below right of =3] {5};
  \node (6) [below left of =3] {6};
  \node (7) [right of = 1] {7};
  \node (8) [right of = 7] {8};
  
\path[every node/.style={font=\sffamily\small}]
    (1) edge node {} (2)
    (2) edge node {} (3)
    (3) edge node {} (1)
    (3) edge node {} (4)
    (4) edge node {} (6)
    (6) edge node {} (3)
    (7) edge[ultra thick] node {} (8)
    (8) edge[ultra thick] node {} (2)
    (2) edge[ultra thick] node {} (7)
    (3) edge node {} (5)
    (5) edge node {} (4)
    (6) edge node {} (5);

\end{scope}

\end{tikzpicture}
\caption{Example of a triangle-induced graph, $\TIG,$  with 8 vertices spanned by 5 triangles. Here the graph is sequentially constructed as a union of triangles, and at each step the newly added triangle is highlighted using bold edges. Note that this graph is also spanned by 6 triangles.} \label{fig:triangle-induced}
\end{figure}

Note that, e.g., although a clique of size 4 contains four triangles, it can be spanned by three triangles as well.

In the remainder of this section we prove three main results. 
{First, we prove Lemma \ref{lemma 3.1} which states that $\TIG$s are far from trees, namely, it lower bounds the number of tree-excess edges in a $\TIG$ in terms of the total number of edges. Then, we prove Lemma \ref{lemma 3.2}, which upper bounds the number of triangles {in a connected graph} in terms of the number of tree-excess edges. The next result we prove is Lemma \ref{lemma 4.3} which states that any connected graph having not too many tree-excess edges contains a dense sub-graph which accounts for most triangles. We finish with the proof of a structural result, Proposition \ref{equality holder}, which states that a graph where the Kruskal-Katona bound is almost sharp, necessarily  contains a subgraph that is almost a clique, in the sense of the relation between the number of vertices, edges and triangles.
}

\begin{lemma}\label{lemma 3.1}
Suppose that $G = (V,E)$ is a $\TIG$. Then,
\begin{align*}
|E| \leq 5(|E|-|V| ) + 5 .
\end{align*}  
\end{lemma}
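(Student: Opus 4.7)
The inequality $|E|\le 5(|E|-|V|)+5$ is equivalent to $4|E|-5|V|\ge -5$, so the plan is to track the quantity $f(H):=4|E(H)|-5|V(H)|$ as we sequentially build $G$ up as a union of triangles, showing that $f$ never decreases, and then comparing the terminal value to $f(T)=-3$ for a single triangle $T$.

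\textbf{Step 1 (ordering the spanning triangles).} By the definition of a $\TIG$, there exist triangles $T_1,\dots,T_\ell$ with $V(G)=\bigcup_i V(T_i)$ and $E(G)=\bigcup_i E(T_i)$. I would first reorder them so that for every $i$ the partial union $G_i:=T_1\cup\cdots\cup T_i$ is connected. This is possible because $G$ is connected: once $G_i$ is a proper subgraph of $G$, pick any path in $G$ from $V(G)\setminus V(G_i)$ to $V(G_i)$; the first edge on the path that enters $V(G_i)$ must belong to some as-yet-unused $T_j$, and that $T_j$ then shares at least one vertex with $G_i$. Taking such a $T_j$ as $T_{i+1}$ keeps $G_{i+1}$ connected. (Triangles already fully contained in $G_i$ can be appended at any time without affecting anything.) I expect this to be the only mildly non-routine step.

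\textbf{Step 2 (increment analysis).} Set $\Delta v_i=|V(G_i)|-|V(G_{i-1})|$ and $\Delta e_i=|E(G_i)|-|E(G_{i-1})|$. Since $G_i$ is connected, $T_i$ must share at least one vertex with $G_{i-1}$, and I would split into three cases according to how many vertices it shares:
\begin{itemize}
\item \emph{One shared vertex:} the other two vertices and all three edges are new, so $(\Delta v_i,\Delta e_i)=(2,3)$ and $4\Delta e_i-5\Delta v_i=2$.
\item \emph{Two shared vertices:} one new vertex, the two edges incident to it are new, the third edge may or may not already belong to $G_{i-1}$. So $(\Delta v_i,\Delta e_i)\in\{(1,2),(1,3)\}$ and $4\Delta e_i-5\Delta v_i\in\{3,7\}$.
\item \emph{Three shared vertices:} $\Delta v_i=0$ and $\Delta e_i\in\{0,1,2,3\}$, giving $4\Delta e_i-5\Delta v_i\in\{0,4,8,12\}$.
\end{itemize}
In each case $f(G_i)-f(G_{i-1})=4\Delta e_i-5\Delta v_i\ge 0$.

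\textbf{Step 3 (conclusion).} Since $f(G_1)=4\cdot 3-5\cdot 3=-3$ and $f$ is non-decreasing along the sequence $G_1,G_2,\dots,G_\ell=G$, we obtain $4|E|-5|V|=f(G)\ge -3\ge -5$, which is the claimed bound.
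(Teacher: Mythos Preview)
Your proof is correct and takes a genuinely different route from the paper's. You build $G$ incrementally as a connected union of triangles and track the linear functional $f(H)=4|E(H)|-5|V(H)|$, checking in each of the three possible attachment cases that $f$ does not decrease. This is clean and elementary; in fact your argument yields the slightly sharper $4|E|-5|V|\ge -3$ rather than $\ge -5$. One tiny gap in Step~1: your path argument presumes $V(G)\setminus V(G_i)\neq\emptyset$, but it is possible that $V(G_i)=V(G)$ while $E(G_i)\subsetneq E(G)$. This is harmless, since then any unused triangle has all three vertices already in $G_i$ and can be appended directly---but it is worth stating explicitly.

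The paper instead fixes a spanning tree $T$ of $G$ rooted at an arbitrary $\rho$ and constructs an at-most-4-to-1 map $\phi:E(T)\to E\setminus E(T)$. For each tree edge $e$ it picks a triangle $\Delta(e)$ through $e$ (which exists since $G$ is a $\TIG$); if $\Delta(e)$ has a unique excess edge, $\phi(e)$ is that edge, and if $\Delta(e)$ has two excess edges, $\phi(e)$ is the one incident to the endpoint of $e$ farther from $\rho$. A short case analysis using that $T$ is a tree shows each excess edge has at most two preimages of each type, giving $|E(T)|\le 4\,|E\setminus E(T)|$ and hence the lemma. The advantage of the paper's approach is that the same spanning-tree-plus-excess-edges framework, and in particular the type classification of triangles by how many excess edges they contain, is reused immediately afterward in the proof of Lemma~\ref{lemma 3.2}. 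Your inductive build-up is self-contained and arguably simpler for this lemma alone, but does not feed as directly into that next step.
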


\begin{proof}
Let $T$ be a spanning tree of $G$ (recall that $G$ is connected), and pick an arbitrary vertex as the root and denote it by $\rho$.  We construct a map $\phi : E(T) \rightarrow E \setminus E(T)$  which is an at most 4 to 1 mapping. From now on, we call any element in $E \setminus E(T)$ an excess edge.

For any edge $e \in E(T)$, let $\Delta(e)$ be a triangle containing $e$; there may be several such in which case we choose an arbitrary one {(such a triangle always exists since $G$ is a $\TIG$)}. Note that since $T$ is a tree,  not all edges in  $\Delta(e)$ can belong to $E(T)$. This leads us to consider the following two cases.

\textbf{Type 1.} $\Delta(e)$ contains only one excess edge $E \setminus E(T)$:
We denote this excess edge by $\phi(e)$. 

\textbf{Type 2.} $\Delta(e)$ contains two   excess edges $E \setminus E(T)$:
{Let $v$ and $w$ be the  endpoints of the edge $e$. Since $d(\rho,v)\neq d(\rho,w)$ (recall that $e$ is an edge in a tree $T$),  without loss of generality, we  assume that $d(\rho,v)> d(\rho,w)$. Then, define $\phi(e)$ to be the excess edge in $\Delta(e)$ which has $v$ as an endpoint.}

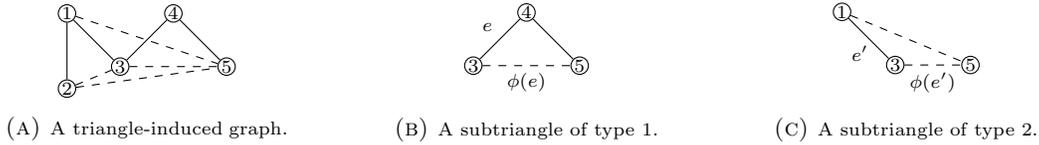
\begin{figure}[H]
\begin{tiny}
\begin{subfigure}{0.3\textwidth}
\centering
\begin{tikzpicture}[node distance=1cm, font = \fontsize{1}{0.1}, scale = 1]
\tikzstyle{every node}=[draw, shape=circle, minimum size = 0.1pt, inner sep=0.2]
  \node (1) {1};
  \node (2)  [below of = 1] {2};
  \node (3) [below right of = 1] {3};
  \node (4) [above right of = 3] {4};
  \node (5) [below right of = 4] {5};

\path[every node/.style={font=\sffamily\small}]
    (1) edge (2)
    (2) edge[dashed] (3)
    (3) edge (1)
    (3) edge (4)
    (4) edge (5)
    (2) edge[dashed] (5)
    (1) edge[dashed] (5)
    (3) edge[dashed] (5);
\end{tikzpicture}
\caption{\tiny{A triangle-induced graph.}}
\end{subfigure}
\begin{subfigure}{0.3\textwidth}
\centering
\begin{tikzpicture}[node distance=1cm, font = \fontsize{1}{0.1}, scale = 1]
\tikzstyle{every node}=[draw, shape=circle, minimum size = 0.1pt, inner sep=0.2]
  \node (4) {4};
  \node (3) [below left of = 4] {3};
  \node (5) [below right of = 4] {5};

\path[every node/.style={font=\sffamily\tiny}]
    (3) edge node[above left] {$e$}  (4)
   (4) edge (5)
    (3) edge[dashed] node[below] {$\phi(e)$} (5);
\end{tikzpicture}
\caption{\tiny{A subtriangle of type 1.}}
\end{subfigure}
\begin{subfigure}{0.3\textwidth}
\centering
\begin{tikzpicture}[node distance=1cm, font = \fontsize{1}{0.1}, scale = 1]
\tikzstyle{every node}=[draw, shape=circle, minimum size = 0.1pt, inner sep=0.2]
  \node (1) {1};
  \node (3) [below right of = 1] {3};
  \node (5) [right of = 3] {5};

\path[every node/.style={font=\sffamily\tiny}]
    (3) edge node[below left, xshift = 0.1cm] {$e'$}  (1)
    (1) edge[dashed] (5)
    (3) edge[dashed] node[below] {$\phi(e')$} (5);

\end{tikzpicture}
\caption{\tiny{A subtriangle of type 2.}}
\end{subfigure}
\end{tiny}
\caption{The solid edges in graph (A) form a spanning tree and the excess edges are dashed. We take node $1$ as the root of the spanning tree. For any edge of the spanning tree, the node with the higher label is further away from the root.} \label{fig:excess_edges2}
\end{figure}

\noindent
\textbf{Claim.} Each excess edge in $E \setminus E(T)$ is mapped to by at most four edges $e\in E(T)$.

To see this first note that any excess edge $f$ is mapped to by at most two edges of type 1. In fact, if an excess edge $f$ is contained in two triangles $\Delta_1$ and $\Delta_2$ that contain only one excess edge, then there is a cycle of size 4 in $T$, which cannot be true since $T$ is a tree.

Therefore, it suffices to show that any excess edge $f = {(v,w)}$ is mapped to by at most two edges $e\in E(T)$ of type 2.  Suppose that  there exist  two edges $e_1 = {(v_1,v)}$ and $e_2 = (v_2,v)$ in $  E(T)$ such that $\phi(e_1)=\phi(e_2) = f$. {Then, by the definition of $\phi$,
$d(\rho,v_1) = d(\rho,v_2)  = d(\rho,v)-1$}, which again violates the fact that $T$ is a tree. Hence, for any endpoint of an excess edge $f$, at most one edge $e\in E(T)$ of type 2 {incident on it}  which is mapped to $f$.  Thus, any  $f = (v,w)$ is mapped to by at most two edges  $e\in E(T)$ of type 2, concluding the proof of the claim.
 
 \noindent
Therefore, since $\phi : E(T) \rightarrow E \setminus E(T)$ is an at most 4 to 1 mapping, we have
\begin{align*}
  E(T)\leq 4( |E| - |E(T)|  ),
\end{align*}
which finishes the proof.

\end{proof}

{\begin{lemma}  \label{lemma 3.2}
Let $G = (V,E)$ be a connected graph. Then,
\begin{align*}
\Delta \leq \frac{\sqrt{2}}{3} (|E|-|V|+1)^{3/2} +3( |E| - |V| + 1).
\end{align*} 

\end{lemma}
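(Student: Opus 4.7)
The plan is to fix a spanning tree $T$ of $G$ (which exists since $G$ is connected), set $F := E \setminus E(T)$, and observe that $|F|=\tau$, where $\tau := |E|-|V|+1$. Since $T$ is triangle-free, every triangle of $G$ must contain at least one edge of $F$. I would classify each triangle by the exact number of its edges lying in $F$, writing $\Delta = \Delta_1+\Delta_2+\Delta_3$ where $\Delta_i$ denotes the number of triangles with exactly $i$ edges in $F$. The goal is then to bound each $\Delta_i$ separately and sum.

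For $\Delta_1$: a triangle containing a single $F$-edge $f=\{u,v\}$ has its other two edges in $T$, which forces a $T$-path of length two from $u$ to $v$. Because $T$ is a tree, this path (and hence the third vertex of the triangle) is unique. Therefore distinct such triangles correspond to distinct $F$-edges, so $\Delta_1 \leq |F| = \tau$.

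For $\Delta_2$: the two $F$-edges of such a triangle meet at a unique \emph{apex} vertex $v$, and the third edge is a $T$-edge joining two $F$-neighbors of $v$. Thus, for fixed $v$, the number of $\Delta_2$-triangles with apex $v$ equals the number of edges of $T$ lying inside the set $N_F(v)$ of $F$-neighbors of $v$. Since the subgraph of $T$ induced on $N_F(v)$ is a forest, this count is at most $|N_F(v)|-1 = d_F(v)-1$. Summing over $v$ gives
\[
\Delta_2 \;\leq\; \sum_{v} (d_F(v)-1)^{+} \;\leq\; \sum_v d_F(v) \;=\; 2|F| \;=\; 2\tau.
\]

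For $\Delta_3$: these are precisely the triangles in the subgraph $(V,F)$, which has $\tau$ edges. Applying Lemma~\ref{lemmaholder} (the Kruskal--Katona bound) to this subgraph yields $\Delta_3 \leq \tfrac{\sqrt 2}{3}\tau^{3/2}$. Summing the three bounds produces $\Delta \leq \tfrac{\sqrt 2}{3}\tau^{3/2} + 3\tau$, exactly matching the lemma's statement. The one genuinely nontrivial step is the bound on $\Delta_2$, which leverages the fact that a subforest of $T$ on $d$ vertices has at most $d-1$ edges; the additive correction $3(|E|-|V|+1)$ in the lemma precisely absorbs the linear contributions $\tau$ and $2\tau$ from $\Delta_1$ and $\Delta_2$, while Kruskal--Katona on $F$ furnishes the dominant $\tau^{3/2}$ term, with the same leading constant as in Lemma~\ref{lemmaholder}.
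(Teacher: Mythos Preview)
Your proof is correct and follows essentially the same approach as the paper: fix a spanning tree, classify triangles by the number of non-tree edges they contain, apply Kruskal--Katona (Lemma~\ref{lemmaholder}) to bound $\Delta_3$, and show $\Delta_1+\Delta_2\le 3\tau$. The only difference is in the $\Delta_2$ step: the paper roots the tree and defines a map from type-2 triangles to excess edges (showing each excess edge has at most two preimages via a distance-to-root argument), whereas you count, for each apex $v$, the $T$-edges inside $N_F(v)$ and use the forest bound $d_F(v)-1$; both yield $\Delta_2\le 2\tau$ and are equally elementary.
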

}

\begin{proof}
As in the previous lemma,  let $T$ be a spanning tree of $G$ and $\rho$ be an arbitrary root. 
Since $T$ is a tree, each triangle $\Delta$ in $G$ is of one of the following three types.

Type 1. $\Delta$ has one excess edge.

Type 2. $\Delta$ has two  excess edges.

Type 3. $\Delta$ has three excess edges.

We define a map $\phi$ from  the set of  triangles  of type 1 or type 2 to $E \setminus E(T)$, the set of excess edges. For any triangle $\Delta$ of type 1, define $\phi(\Delta)$ to be the unique edge in $E \setminus E(T)$ contained in $\Delta$.  For any triangle $\Delta$ of type 2, let $e = (v,w)$ be the unique edge of $\Delta$ in $E(T)$.  Without loss of generality, we assume $d(\rho,v) > d(\rho,w)$. Then, define   $\phi(\Delta)$ to be {the edge} in $E \setminus E(T)$ which has $v$ as an endpoint.

{Since $T$ is a tree,  each edge in $E \setminus E(T)$ is mapped to by at most one triangle of type 1.} In addition,
by the same reasoning as in the proof of Lemma \ref{lemma 3.1}, each edge in $E \setminus E(T)$ is mapped to by at most two triangles of type 2. Hence,
\begin{align} \label{221}
|\text{type 1 triangles}| +|\text{type 2 triangles}| \leq  3(|E|-|E(T)|)   =  3(|E|-|V|+1).
\end{align}
In order to control the number of triangles of type 3, we use Lemma \ref{lemmaholder} upper bounding the number of triangles in a graph in terms of the number of edges,
\begin{align} \label{222}
|\text{type 3 triangles}| \leq  \frac{\sqrt{2}}{3} (|E|-|V|+1)^{3/2}.
\end{align}
Therefore, combining \eqref{221} and \eqref{222},   we conclude the proof.
\end{proof}

The next result states that if a connected graph has not too many tree excess edges, then it admits a dense subgraph, {in the sense that the number of edges is of the order of the square of the vertices}, which accounts for most of its triangles. 

{
\begin{lemma}  \label{lemma 4.3}
Let $\xi>0$ be a  sufficiently small constant.
Let  $G=(V,E)$ be a {connected} graph containing at least $\ell$  triangles such that 
\begin{align*} 
|E|-|V|  \leq  \frac{1}{2}\xi^{-1/2}  \ell^{2/3} - 1.
\end{align*} 
Then,    for  sufficiently large $\ell
$, there exists  a subgraph $G'$ in $G$ such that
\begin{align}\label{334}
 |V(G')| \leq  \xi^{-3/2} \ell^{1/3},
\end{align}
\begin{align}\label{333} 
|E(G')| \geq  \Big (\frac{3}{\sqrt{2}}\Big)^{3/2} (1- 2\xi^{1/2})^{2/3} \ell^{2/3},
\end{align}
\begin{align}\label{335}  
|\Delta(G')|  \geq (1-2 \xi^{1/2}) \ell.
\end{align}

\end{lemma}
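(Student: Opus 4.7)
My plan is to construct $G'$ via an iterative peeling of low-triangle-degree vertices, and then to verify the three required bounds. Fix the threshold $\tau \defeq 3\xi^{3/2}\ell^{2/3}$. Starting from $G^{(0)} \defeq G$, at each step I remove any vertex $v$ of the current graph $G^{(i)}$ whose triangle-degree $t_{G^{(i)}}(v)$ (the number of triangles of $G^{(i)}$ passing through $v$) is less than $\tau$; I stop once every remaining vertex has triangle-degree at least $\tau$, and I define $G'$ to be the resulting graph.

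The vertex bound \eqref{334} then follows directly from the stopping rule: summing triangle-degrees over $V(G')$ with each triangle counted three times gives $\tau |V(G')| \leq 3\Delta(G') \leq 3\ell$, so $|V(G')| \leq \xi^{-3/2}\ell^{1/3}$. Once the triangle bound \eqref{335} is known, the edge bound \eqref{333} is immediate from the Kruskal--Katona inequality (Lemma \ref{lemmaholder}) applied to $G'$: namely $|E(G')| \geq \bigl(\tfrac{3}{\sqrt 2}\bigr)^{2/3} \Delta(G')^{2/3} \geq \bigl(\tfrac{3}{\sqrt 2}\bigr)^{2/3}(1-2\xi^{1/2})^{2/3}\ell^{2/3}$, which yields \eqref{333} after matching constants.

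The heart of the proof is \eqref{335}, that the peeling destroys at most $2\xi^{1/2}\ell$ triangles. Let $v_1,\ldots,v_r$ be the removals in order with triangle-degrees $t_i \defeq t_{G^{(i)}}(v_i)$. A useful observation is that each destroyed triangle is destroyed exactly once, namely at the first step at which one of its three vertices is removed, so the total number of destroyed triangles equals $\sum_i t_i$, each summand being less than $\tau$. I will argue by contradiction: assume $\sum_i t_i > 2\xi^{1/2}\ell$. For each $i$ with $t_i \geq 1$, since the $t_i$ triangles through $v_i$ correspond to edges in the $G^{(i)}$-neighborhood of $v_i$, one has $d_{G^{(i)}}(v_i) \geq \sqrt{2 t_i}$. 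Tracking the cumulative effect of peeling on the excess quantity $|E| - |V| + 1$, which decreases by $d_{G^{(i)}}(v_i) - 1$ per step (up to component corrections), and combining with the componentwise lower bound on the residual excess of $G'$ supplied by Lemma \ref{lemma 3.2} (of order $\Delta(G')^{2/3} \asymp \ell^{2/3}$), a Cauchy--Schwarz-type inequality applied to $\sum_i \sqrt{t_i}$ under the constraints $\sum_i t_i > 2\xi^{1/2}\ell$ and $t_i < \tau$ will force the original excess $m = |E(G)|-|V(G)|+1$ to strictly exceed $\tfrac12\xi^{-1/2}\ell^{2/3}$, contradicting the hypothesis.

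The main obstacle is the connectivity bookkeeping: $G^{(i)}$ may disconnect during the peeling, in which case the ``excess'' becomes a sum of cyclomatic numbers across components, and small components that carry no triangles contribute overhead one must absorb. I plan to address this either by (a) a preprocessing pass that iteratively removes all vertices that currently lie in no triangle (such removals destroy zero triangles and do not disconnect the triangle-supporting part of $G$), followed by the $\tau$-threshold peeling on what remains, or by (b) choosing the peeling order to respect the block decomposition of $G$ so that the triangle-carrying blocks stay intact until their own turn. In either case, the contradiction step above should go through with only mild adjustments of constants, closing the proof of \eqref{335} and hence the lemma.
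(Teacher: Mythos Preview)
Your approach has two concrete gaps that prevent the argument from closing for small $\xi$.

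First, the vertex bound step ``$\tau|V(G')|\le 3\Delta(G')\le 3\ell$'' is unjustified: the hypothesis says $\Delta(G)\ge\ell$, not $\Delta(G)\le\ell$. The best upper bound available comes from Lemma~\ref{lemma 3.2} and the excess hypothesis, which only gives $\Delta(G)\le\tfrac{1}{6}\xi^{-3/4}\ell+O(\ell^{2/3})$; feeding this into your inequality yields $|V(G')|\lesssim\xi^{-9/4}\ell^{1/3}$, not the required $\xi^{-3/2}\ell^{1/3}$.

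Second, and more seriously, the contradiction step for \eqref{335} does not close numerically. Under $\sum_i t_i>2\xi^{1/2}\ell$ and $t_i<\tau=3\xi^{3/2}\ell^{2/3}$, concavity of $\sqrt{\,\cdot\,}$ gives only $\sum_i\sqrt{t_i}\ge\sum_i t_i/\sqrt{\tau}>\tfrac{2}{\sqrt 3}\xi^{-1/4}\ell^{2/3}$, hence $\sum_i d_{G^{(i)}}(v_i)\gtrsim\xi^{-1/4}\ell^{2/3}$. But the available excess is $\tfrac12\xi^{-1/2}\ell^{2/3}$, and for small $\xi$ one has $\xi^{-1/2}\gg\xi^{-1/4}$, so no contradiction arises. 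One can check that no choice of threshold $\tau$ salvages both bounds simultaneously once $\xi$ is below an absolute constant; the connectivity bookkeeping you flag only makes matters worse.

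The paper avoids these difficulties by a different two-step reduction. It first invokes Lemma~\ref{lemma 4.2} to pass to the subgraph $G_1$ on the \emph{excess edges} of a spanning tree, which has at most $\tfrac12\xi^{-1/2}\ell^{2/3}$ edges \emph{total} (not merely excess edges) while losing at most $3(|E|-|V|+1)=O(\xi^{-1/2}\ell^{2/3})$ triangles. Then it performs a single, non-iterative removal of all vertices of $G_1$ with \emph{ordinary degree} at most $\xi\ell^{1/3}$; the triangle loss is bounded by $\sum_{v\in A}d_v^2\le(\max_A d_v)\sum_A d_v\le\xi\ell^{1/3}\cdot 2|E(G_1)|\le\xi^{1/2}\ell$, and the vertex bound follows from $|V(G')|\cdot\xi\ell^{1/3}\le 2|E(G_1)|\le\xi^{-1/2}\ell^{2/3}$. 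The passage through Lemma~\ref{lemma 4.2} is exactly the missing idea: it converts the excess bound into a genuine edge bound on a subgraph carrying almost all triangles, after which degree-based (rather than triangle-degree-based) peeling suffices.
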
 
}
We need the following easy lemma.
\begin{lemma} \label{lemma 4.2}
Let 
$G = (V,E)$ be a {connected  graph}.
Then there is a  subgraph $G'$ such that
\begin{align} \label{420}
\Delta(G') \geq  \Delta(G) -  3(|E|-|V|+1) ,\qquad |E(G')| = |E| - |V| +1 .
\end{align} 
\end{lemma}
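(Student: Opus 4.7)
The plan is to take any spanning tree $T$ of $G$ (which exists because $G$ is connected) and define $G'$ to be the subgraph with vertex set $V$ and edge set $E\setminus E(T)$, i.e.\ the subgraph consisting of the tree-excess edges. Since $|E(T)|=|V|-1$, this immediately gives $|E(G')|=|E|-|V|+1$, verifying the second clause of \eqref{420}.

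For the triangle count, I would recycle the three-type classification of triangles used inside the proof of Lemma \ref{lemma 3.2}: each triangle of $G$ has one, two, or three edges in $E\setminus E(T)$ (it cannot have zero, since $T$ is a tree). The triangles surviving in $G'$ are exactly the type $3$ ones, so the deficit $\Delta(G)-\Delta(G')$ equals the number of type $1$ plus type $2$ triangles. The proof of Lemma \ref{lemma 3.2} has already controlled this quantity via the map $\phi$ that sends a type $1$ or type $2$ triangle to one of its excess edges: each excess edge has at most one type-$1$ preimage (else two such triangles would create a $4$-cycle in $T$) and at most two type-$2$ preimages (by the endpoint-depth argument used in the proofs of Lemmas \ref{lemma 3.1} and \ref{lemma 3.2}). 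Summing,
\begin{equation*}
|\text{type 1}|+|\text{type 2}|\;\le\;3(|E|-|V|+1),
\end{equation*}
which rearranges to $\Delta(G')\ge \Delta(G)-3(|E|-|V|+1)$, the first clause of \eqref{420}.

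No substantive obstacle is anticipated: Lemma \ref{lemma 4.2} is essentially a repackaging of the combinatorial bookkeeping already developed for Lemma \ref{lemma 3.2}, with the observation that the subgraph spanned by the excess edges is a natural witness to the asserted inequalities.
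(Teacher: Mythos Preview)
Your proposal is correct and is essentially identical to the paper's proof: the paper also takes a spanning tree $T$, lets $G'$ be the subgraph induced by the excess edges $E\setminus E(T)$, observes that $\Delta(G')$ equals the number of type~3 triangles, and then invokes the bound \eqref{221} from the proof of Lemma~\ref{lemma 3.2}.
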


\begin{proof} Consider a spanning tree $T$ of $G$. 
Let  $G'$ be {the subgraph} induced by the excess edges $E \setminus E(T)$. {Then, in terms of the terminology in the proof of  Lemma \ref{lemma 3.2}, $\Delta(G') $ is nothing other than the number of type 3 triangles.  Therefore, by \eqref{221}, we have \eqref{420}.}
\end{proof}

Equipped with this lemma, we proceed to proving Lemma \ref{lemma 4.3}. 

\begin{proof}[Proof of Lemma \ref{lemma 4.3}]

 By Lemma \ref{lemma 4.2},  there exists a  subgraph $G_1$  such that 
\begin{align*}
\Delta(G_1) \geq \ell -    \frac{3}{2}\xi^{-1/2}  \ell^{2/3}   ,\qquad |E(G_1)| \leq   \frac{1}{2}\xi^{-1/2}  \ell^{2/3} .
\end{align*}
For $v\in V(G_1)$, let $d_v$ be {the} degree of $v$ in $G_1$.
Define 
\begin{align*}
A:= \{v \in V(G_1): d_v \leq \xi \ell^{1/3}\}.
\end{align*}
Then, the number of triangles in $G_1$ having at least one vertex in $A$ is bounded by
\begin{align} \label{331}
{\sum_{v\in A} d_v^2} \leq  \xi \ell^{1/3} \sum_{v\in A} d_v\leq   2  \xi \ell^{1/3}   |E(G_1)| \leq  \xi^{1/2}\ell.
\end{align}
Denote by $G'$ the subgraph of $G_1$ induced by the vertex set $V(G_1)\setminus A$. Then, for large enough $\ell$,
\begin{align*}
\Delta(G') \geq  \ell -   \frac{3}{2}\xi^{-1/2}  \ell^{2/3}    -{\ \xi^{1/2}\ell}  \geq (1-2 \xi^{1/2}) \ell.
\end{align*}  Thus,   we have \eqref{335}, and by the bound in Lemma \ref{lemmaholder},
we subsequently obtain \eqref{333}. In addition, using the fact 
\begin{align*}
\xi^{-1/2}\ell^{2/3} \geq 2 E(G_1)  \geq    \sum_{v\in G'} d_v \geq |V(G')| \xi \ell^{1/3},
\end{align*}
we have \eqref{334}.
\end{proof}

The final result of this section provides a structural description of when \eqref{holder} is almost sharp. As has already been mentioned in Section \ref{iop}, after the completion of the paper, Noga Alon pointed out to us that a similar ``stability" result for the Kruskal-Katona theorem had been proved earlier in \cite{keevash08}. However, since our proof appears to be different and could be of independent interest, we decided to include the proof. 
 \begin{proposition} \label{equality holder}
 Let $\e>0$ be a sufficiently small constant.
Let $G=(V,E)$ be a graph with at least $ \frac{1}{6} (1-\varepsilon) k$ many triangles and at most $ (1+\e) \frac{1}{2} k^{2/3}$ many edges. Then, there is a subset  $V' \subseteq V$ such that $|V'| \leq (1+6\e^{1/4})k^{1/3} $ and $\Delta(V') \geq \frac{1}{6}(1-6\e^{1/4})k$.
\end{proposition}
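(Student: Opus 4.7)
The plan is to invoke spectral graph theory. Let $A$ be the adjacency matrix of $G$ with eigenvalues $\lambda_1 \geq \cdots \geq \lambda_n$ and let $\psi \geq 0$ be a Perron eigenvector with $\|\psi\|_2 = 1$. Using the standard identities $2|E| = \sum_i \lambda_i^2$ and $6\Delta = \sum_i \lambda_i^3$, together with the elementary inequality $\sum_i \lambda_i^3 \leq \lambda_1 \sum_i \lambda_i^2$ (valid since $\lambda_i \leq \lambda_1$ and $\lambda_i^2 \geq 0$), the hypotheses of the proposition yield
\[
\lambda_1 \;\geq\; \frac{6\Delta}{2|E|} \;\geq\; \frac{(1-\varepsilon)k}{(1+\varepsilon)k^{2/3}} \;\geq\; (1-3\varepsilon)\,k^{1/3}, \qquad \lambda_1 \;\leq\; \sqrt{2|E|} \;\leq\; (1+\varepsilon)^{1/2}\,k^{1/3}.
\]
Subtracting $\lambda_1^2$ from $2|E|$ therefore gives $\sum_{i\geq 2}\lambda_i^2 \leq 7\varepsilon\,k^{2/3}$ and, equivalently, the Frobenius approximation $\|A - \lambda_1 \psi\psi^\top\|_F^2 \leq 7\varepsilon\,k^{2/3}$; in other words, $A$ is essentially rank one.

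Next, I would define the candidate set by thresholding the Perron eigenvector: set $\gamma := (1-\varepsilon^{1/4})\,k^{-1/6}$ and $V' := \{v \in V : \psi_v \geq \gamma\}$. Since each $v \in V'$ contributes at least $\gamma^2$ to $\sum_v \psi_v^2 = 1$, Parseval immediately gives the required upper bound $|V'| \leq \gamma^{-2} = (1-\varepsilon^{1/4})^{-2} k^{1/3} \leq (1+6\varepsilon^{1/4})\,k^{1/3}$. To see that $V'$ is a near-clique, observe that for distinct $u,v \in V'$ one has $\lambda_1\psi_u\psi_v \geq (1-3\varepsilon)(1-\varepsilon^{1/4})^2 \geq 1/2$ for $\varepsilon$ sufficiently small; hence any non-edge $uv$ inside $V'$ contributes $(0 - \lambda_1\psi_u\psi_v)^2 \geq 1/4$ to the Frobenius bound above. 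Comparing with the estimate $7\varepsilon k^{2/3}$ yields that the number of non-edges $M$ inside $V'$ satisfies $M \leq 14\,\varepsilon\,k^{2/3}$. Writing $t := |V'|$ and noting that each missing edge destroys at most $t-2$ triangles in the corresponding clique, we conclude
\[
\Delta(V') \;\geq\; \binom{t}{3} - (t-2)\,M.
\]

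The principal remaining task, and the main obstacle, is a matching \emph{lower} bound on $t$, namely $t \geq (1 - \varepsilon^{1/4})k^{1/3}$, from which the desired estimate $\Delta(V') \geq (1-6\varepsilon^{1/4})k/6$ follows by combining with $M \leq 14\varepsilon k^{2/3}$ and elementary manipulation. To prove this lower bound I would show that $\psi$ carries essentially all of its $\ell^2$-mass on $V'$. Decomposing $\lambda_1 = 2\sum_{uv \in E}\psi_u\psi_v = 2\sigma_0 + 2\sigma_1 + 2\sigma_2$ according to whether both endpoints lie in $V'$, both lie outside $V'$, or the edge crosses, a case analysis analogous to the one above (applied to edges outside $V'$, where $\lambda_1\psi_u\psi_v < 1/2$ forces $A_{uv}=1$ contributions to the Frobenius norm) controls the number and weight of edges with at least one endpoint outside $V'$, giving $2\sigma_1 + 2\sigma_2 \leq O(\varepsilon^{1/4})k^{1/3}$ and hence $2\sigma_0 \geq \lambda_1 - O(\varepsilon^{1/4})k^{1/3}$. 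Using $\psi|_{V'}$ as a test vector for the principal submatrix $A[V']$, the Rayleigh quotient then yields $\lambda_1(A[V']) \geq 2\sigma_0 / \|\psi|_{V'}\|_2^2 \geq (1 - O(\varepsilon^{1/4}))\,k^{1/3}$, and finally the trivial bound $\lambda_1(A[V']) \leq t-1$ produces the desired lower bound on $t$, completing the proof.
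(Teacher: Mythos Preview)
Your spectral setup and the first half of the argument are essentially the same as the paper's: both use $\mathrm{tr}(A^2)=2|E|$, $\mathrm{tr}(A^3)=6\Delta$ to pin down $\lambda_1\approx k^{1/3}$ and obtain the rank-one approximation $\|A-\lambda_1\psi\psi^\top\|_F^2\le O(\varepsilon)k^{2/3}$. Your upper bound $|V'|\le(1+6\varepsilon^{1/4})k^{1/3}$ and the count of non-edges inside $V'$ are also correct.

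The real divergence is in the endgame. The paper does \emph{not} try to lower-bound $|V'|$: it partitions $V$ into four bands $S_1,S_2,S_3,S_4$ according to the size of $|\psi_v|$, takes $V'=S_4$ (a \emph{two}-sided threshold), and directly bounds $\Delta(G)-\Delta(S_4)$ by estimating the number of triangles touching $S_1,S_2,S_3$. This sidesteps any need to know $|S_4|$ from below.

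Your route instead requires a lower bound on $t=|V'|$, and this is where the argument breaks. The parenthetical claim that for edges with at least one endpoint outside $V'$ one has $\lambda_1\psi_u\psi_v<1/2$ is false: if $u,v\notin V'$ both have $\psi$-value just below the threshold $\gamma=(1-\varepsilon^{1/4})k^{-1/6}$, then $\lambda_1\psi_u\psi_v$ can be as large as $(1+\varepsilon)^{1/2}(1-\varepsilon^{1/4})^2\approx 1-2\varepsilon^{1/4}$, so the Frobenius contribution of such an edge is only $\Theta(\varepsilon^{1/2})$, not $\ge 1/4$. The same issue arises for crossing edges $uv$ with $\psi_u$ slightly above and $\psi_v$ slightly below $\gamma$. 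With a single threshold you cannot distinguish ``just below $\gamma$'' from ``much smaller than $\gamma$'', and this is precisely what the paper's intermediate bands $S_2$ (values in $[\varepsilon^{1/8}k^{-1/6},(1-\varepsilon^{1/4})k^{-1/6}]$) and $S_3$ (values $\le \varepsilon^{1/8}k^{-1/6}$) are designed to separate. Without that refinement, your bound $2\sigma_1+2\sigma_2\le O(\varepsilon^{1/4})k^{1/3}$ is asserted rather than proved, and the Rayleigh-quotient step that follows is left unsupported.
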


In order to prove this, we analyze the principal eigenvector of the adjacency matrix of $G$.  Note that the top eigenvector of a clique is uniformly distributed. We show that the top eigenvector of $G$ is `almost' uniformly {distributed on some subset $V' \subseteq V$ with $|V'| \approx k^{1/3}$. }
\begin{proof}
Let $\lambda_1\geq  \cdots \geq \lambda_n$ and $v_1,\cdots,v_n$ be eigenvalues and eigenvectors of the adjacency matrix of $G$ which is denoted by $A$. Then,
{\begin{align*}
\text{tr}(A^2) = \sum \lambda_i^2 &= 2  |E| \leq   (1+\e)   k^{2/3}  , \\
\text{tr}(A^3) = \sum \lambda_i^3   &=6\Delta   \geq  (1-\varepsilon ) k.
\end{align*}}
{Since
$
\sum \lambda_i^3 \leq  \lambda_1  \sum \lambda_i^2 
$ (recall that $\lambda_1$ is positive by the Perron-Frobenius theorem)}, we have
\begin{align} \label{412}
 \frac{1-\e}{1+\e} k^{1/3} \leq  \lambda_1 \leq  (1+\e)   ^{1/2}k^{1/3}  ,
\end{align}
which implies
\begin{align} \label{410}
 \lambda_2^2 + \cdots + \lambda_n^2 \leq  \Big(1+ \e -\Big (\frac{1-\e}{1+\e}\Big)^2\Big)  k^{2/3}.
\end{align}

{By \eqref{412} and \eqref{410}, $\lambda_2,\cdots,\lambda_n$ are negligible compared to $\lambda_1$ for small enough $\e>0$.  Motivated by this, we decompose}
\begin{align*}
A = \lambda_1 v_1 v_1^T + \sum_{i=2}^n \lambda_i v_i v_i^T =: A_1 + A_2.
\end{align*}
Define
\begin{align} \label{425}
\tilde{\e}:= 1+ \e - \Big(\frac{1-\e}{1+\e}\Big)^2.
\end{align} 
Then, for small enough $\e>0$,
{
\begin{align} \label{433}
\e < \tilde{\e} < 8\e,
\end{align}
}
and by \eqref{410}, 
\begin{align} \label{413}
\norm{A_2}_F^2 &= \sum_{i=2}^n \lambda_i^2  \leq  \tilde{\e} k^{2/3}.
\end{align}
Letting $v_1 = ( (v_1)_i)_{i=1,\cdots,n}$, we next  partition the {vertex set $V = \{1,\cdots,n\}$} in the following way.
\begin{align*}
S_1&:= \Big\{ i:  {| (v_1)_i  |} > \frac{1+ \tilde{\e}^{1/4} }{k^{1/6}} \Big \} , \\
S_2&:=\Big \{ i:   \frac{\tilde{\e}^{1/8}}{k^{1/6}}   <  | (v_1)_i  | < \frac{1-\tilde{\e}^{1/4}}{k^{1/6}}\Big \}  ,\\
S_3 &:=\Big \{ i:   | (v_1)_i  | \leq  \frac{ \tilde{\e}^{1/8} }{k^{1/6}} \Big\},  \\
S_4 &:= \Big\{ i:  \frac{1- \tilde{\e}^{1/4} }{k^{1/6}} \leq  | (v_1)_i  | \leq  \frac{1+ \tilde{\e}^{1/4} }{k^{1/6}} \Big\}.
\end{align*}
{We will ultimately take $S_4$ to be $V'$ appearing in the statement of the proposition. To be able to show this, we need to establish the contributions coming from the remainder of the graph to be negligible. 
}\\

\noindent
Towards this we first show that 
\begin{align} \label{415}
|S_1|  < 2\tilde{\e}^{1/4}   k^{1/3}.
\end{align}
Using \eqref{412} and the definition of $S_1,$ it follows that each entry of the sub-matrix obtained by the restriction of $A_1=  \lambda_1 v_1 v_1^T$ to $S_1\times S_1$ has absolute value greater than $(1+\tilde{\e}^{1/4})^2 \frac{1-\e}{1+\e} $. Since each entry of $A$ is either 0 or 1 (as it is the adjacency matrix of a graph), 
{for small $\e>0$, absolute value of  each entry in the sub-matrix of $A_2 =  A-A_1$ restricted to $S_1\times S_1$ is greater than
\begin{align*}
  (1+\tilde{\e}^{1/4})^2 \frac{1-\e}{1+\e} -1   > (1+2\tilde{\e}^{1/4}) \frac{1-\e}{1+\e} -1  =  2\tilde{\e}^{1/4} - \frac{2\e}{1+\e}  (1+2\tilde{\e}^{1/4}) > 2\tilde{\e}^{1/4}   - 4\e \overset{\eqref{433}}{>}   \frac{1}{2} \tilde{\e}^{1/4} .
\end{align*}
This implies that the Frobenius norm of  $A_2 $ restricted to $S_1\times S_1$ is greater than
$
 \frac{1}{2}\tilde{\e}^{1/4}   \sqrt{2} |S_1|.
$
Hence,  by \eqref{413}, 
we obtain  \eqref{415}.
}

\noindent
{Next, we show that
\begin{align} \label{416}
|S_2| < 2 \tilde{\e}^{1/4}   k^{1/3}.
\end{align}
Using \eqref{412} and the definition of $S_2,$ it follows that  each entry of the sub-matrix $A_1=   \lambda_1 v_1 v_1^T$ restricted to $S_2\times S_2$ has an absolute value between $\tilde{\e}^{1/4} \frac{1-\e}{1+\e} $ and $(1-\tilde{\e}^{1/4})^2 (1+\e)^{1/2}$. For small enough $\e>0$,  $\tilde{\e}^{1/4} \frac{1-\e}{1+\e}  > \frac{1}{2}\tilde{\e}^{1/4} $ and 
 \begin{align*}
1-(1-\tilde{\e}^{1/4})^2 (1+\e)^{1/2} > 1-(1-\tilde{\e}^{1/4}) \Big(1 + \frac{1}{2}\e\Big) > \tilde{\e}^{1/4} - \frac{1}{2}\e \overset{\eqref{433}}{>}  \frac{1}{2}\tilde{\e}^{1/4}.
\end{align*}
Since each entry of $A$ is either 0 or 1, each entry of $A_2 $ restricted to $S_2\times S_2$ is greater than  $\frac{1}{2}\tilde{\e}^{1/4}$. This implies that the Frobenius norm of  $A_2 $ restricted to $S_2\times S_2$ is greater than
$
 \frac{1}{2}\tilde{\e}^{1/4}   \sqrt{2} |S_2|,
$
and in conjunction with \eqref{413} implies \eqref{416}.

\noindent
In addition, since $\norm{v}_2=1$, by Markov's inequality}
\begin{align} \label{417}
|S_4| \leq   \frac{1}{(1-\tilde{\e}^{1/4})^2} k^{1/3}.
\end{align}

{Recalling that $\Delta(W)$ denotes the number of triangles consisting of vertices  in $W$ and that 
our goal is to prove that most of the triangles in $G$ come from $S_4$, we next show that for small enough $\e$,}
\begin{align} \label{414}
\Delta(G) - \Delta(S_4) \leq 3 \tilde{\e}^{1/4}   k.
\end{align}

The remainder of the proof is devoted to proving the above. There are quite a few technical steps.  {First, using the bounds from \eqref{415} and \eqref{416}, we bound   the number of triangles having at least one vertex in $S_1$ or $S_2$. Then, using the fact that $|(v_1)_i|$ is small for $i\in S_3$, we argue that there cannot be many triangles having at least one vertex in $S_3$. }

We start by defining some new notation. {For any $W_1,W_2\subseteq V$, let $\Delta_1(W_1,W_2)$ be the set of triangles having one vertex in $W_1$ and two vertices in $W_2$,  $\Delta_2(W_1,W_2)$ be the set of triangles having two vertices in $W_1$ and one vertex in $W_2$, and set $\Delta(W_1,W_2) = \Delta_1(W_1,W_2)+\Delta_2(W_1,W_2)$. In addition,  for   $W_1,W_2,W_3\subseteq V$,   let $\Delta(W_1,W_2,W_3)$ be the set of triangles with one vertex each in $W_1,W_2,W_3$ respectively.\\

\noindent
Since $\{S_1,S_2,S_3,S_4\}$ is a partition of the vertex set,}
\begin{align} \label{411}
\Delta(G) - \Delta(S_4)& = \Delta (S_1) +   \Delta (S_2) +  \Delta (S_3) +  \Delta (S_1,S_2) +  \Delta (S_1,S_3)+ \Delta (S_1,S_4)\nonumber  \\
&+ \Delta (S_2,S_3)+ \Delta (S_2,S_4)+ \Delta (S_3,S_4) \nonumber \\
&+ \Delta(S_1,S_2,S_3)+  \Delta(S_1,S_2,S_4)+  \Delta(S_1,S_3,S_4)+  \Delta(S_2,S_3,S_4).
\end{align}
We now seek to control each term.\\

\noindent
{
By \eqref{415},   using $|E| < (1+\e) \frac{1}{2}k^{2/3}$,  
\begin{align} \label{419}
 \Delta (S_1) + \Delta (S_1,S_2)+  \Delta (S_1,S_3) + \Delta (S_1,S_4)  &+  \Delta(S_1,S_2,S_3) + \Delta(S_1,S_3,S_4) \nonumber \\
 &\leq |S_1| \cdot |E| \leq  \tilde{\e}^{1/4} (1+\e) k.
\end{align}
The above uses the trivial bound that the number of triangles with one endpoint in $S_1$ is at most {$|S_1|\cdot |E|.$}\\

\noindent
Similarly, by
\eqref{416},
\begin{align} \label{4190}
  \Delta (S_2) +  \Delta (S_2,S_1) + \Delta(S_2,S_3) + \Delta(S_2,S_4) &+  \Delta(S_1,S_2,S_4)+ \Delta(S_2,S_3,S_4) \nonumber \\
  &  \leq |S_2| \cdot |E|    \leq \tilde{\e}^{1/4} (1+\e) k.
\end{align}
}

Consider the subgraph $G_1$ induced by edges with  both endpoints in $S_3$, or  one in $S_3$ and $S_2\cup S_4$ each. We show that
\begin{align} \label{418}
|E(G_1)| <  \tilde{\e}  k^{2/3}.
\end{align} 
{
Note that by definition of $S_2,S_3,S_4$, for any vertices $i,j$ connected by an edge in $G_1$,
\begin{align*}
|(v_1)_i (v_1)_j| \leq  \frac{\tilde{\e}^{1/8}}{k^{1/6}}  \cdot  \frac{1+\tilde{\e}^{1/4}}{k^{1/6}} .
\end{align*}
 By \eqref{412}, the absolute value of each entry of $A_1$ restricted to $G_1$ is (upper) bounded by
\begin{align*}
  \frac{\tilde{\e}^{1/8}}{k^{1/6}}  \cdot  \frac{1+\tilde{\e}^{1/4}}{k^{1/6}} \cdot (1+\e)^{1/2} k^{1/3} = (1+\e)^{1/2}  \tilde{\e}^{1/8}  (1+ \tilde{\e}^{1/4}).
\end{align*} 
Suppose that \eqref{418} fails, i.e. that there are at least $ \tilde{\e} k^{2/3}$  many edges in $G_1$.} Then, in the matrix $A_2$, there are at least  $2  \tilde{\e}  k^{2/3}$ many entries greater than $1- (1+\e)^{1/2}  \tilde{\e}^{1/8}  (1+ \tilde{\e}^{1/4}) $. This implies that if \eqref{418} fails, then the Frobenius norm of $A_2$ is greater than
\begin{align*}
\sqrt{2} \tilde{\e}^{1/2} k^{1/3} ( 1- (1+\e)^{1/2}  \tilde{\e}^{1/8}  (1+ \tilde{\e}^{1/4})  ) .
\end{align*} 
This contradicts  \eqref{413} for small $\e>0$. Hence, we have \eqref{418} which in particular implies that the number of edges in the subgraph induced by $S_3$, denoted by  $|E(S_3)|$, satisfies
 \begin{align} \label{441}
 |E(S_3)| \leq |E(G_1)| \leq \tilde{\e}k^{2/3}.
\end{align}   Hence, by Lemma \ref{lemmaholder},
\begin{align} \label{421}
\Delta(S_3) \leq  \frac{\sqrt{2}}{3} \tilde{\e}^{3/2} k.
\end{align} 

Finally, in order to bound  \eqref{411}, the last quantity we need to bound is $ \Delta(S_3,S_4)$. We  show that
\begin{align} \label{424}
 \Delta(S_3,S_4) \leq \Big ( \frac{\tilde{\e}}{(1-\tilde{\e}^{1/4})^2}  +   (1+\e)  \tilde{\e}^{1/2}  \Big) k .
\end{align} 
First,  by \eqref{417} and \eqref{441},
\begin{align} \label{422}
\Delta_2(S_3,S_4)  \leq |E(S_3)| \cdot |S_4|  \leq    \frac{\tilde{\e}}{(1-\tilde{\e}^{1/4})^2}  k.
\end{align}

Next,
in order to bound $|\Delta_1(S_3,S_4)|$,
let $d_1, \cdots, d_m$ be the number of neighbors in $S_4$ of
each vertex in $S_3$, i.e., the degree of each vertex of $S_3$ in $S_4$, and let \begin{align*}
W := \{i\in S_3: d_i < \tilde{\e}^{1/2} k^{1/3} \}.
\end{align*} 
By \eqref{418},
\begin{align*}
|W^c\cap S_3| < \tilde{\e}^{1/2} k^{1/3}.
\end{align*}
Since the number of edges in the subgraph induced by $S_4$ is { upper bounded by the total number of edges} $ \frac{1}{2}(1+\e) k^{2/3}$,
the number of triangles  having one vertex in $W^c\cap  S_3$ and two vertices in $S_4$ is bounded by $ \frac{1}{2}(1+\e) \tilde{\e}^{1/2} k$. In addition, using \eqref{418}, the number of triangles  having one vertex in $W$ and two vertices in $S_4$ is bounded by
\begin{align*}
 \sum_{i\in W}d_i^2  \leq \big (\max_{i\in W} d_i \big) (d_1+\cdots+d_m) \leq  \tilde{\e}^{1/2} k^{1/3} \cdot  \tilde{\e} k^{2/3} = \tilde{\e}^{3/2} k.
\end{align*} 
Thus, we have
\begin{align} \label{423}
\Delta_1(S_3,S_4) \leq  { \frac{1}{2}(1+\e) \tilde{\e}^{1/2} k +  \tilde{\e}^{3/2} k \leq}  (1+\e)  \tilde{\e}^{1/2} k.
\end{align}
Combining \eqref{422} and \eqref{423}, we have \eqref{424}. {Therefore, applying \eqref{419}, \eqref{4190} and \eqref{421} to \eqref{411}, we obtain \eqref{414}. 
 Hence,  
 \begin{align*}
  \Delta(S_4) \geq \Delta(G) - 3 \tilde{\e}^{1/4}k \geq  \Big ( \frac{1}{6}(1-\e) - 3 \tilde{\e}^{1/4} \Big ) k  \overset{\eqref{433}}{\geq}   \frac{1}{6}(1- 6\e^{1/4})k.
\end{align*}  In addition, by \eqref{417},
\begin{align*}
|S_4| \leq  \frac{1}{(1-\tilde{\e}^{1/4})^2}  k^{1/3}\leq  (1+3\tilde{\e}^{1/4})  k^{1/3}  \overset{\eqref{433}}{ \leq}  (1+6\e^{1/4})k^{1/3}.
\end{align*} 
This implies that the subset $S_4 $ satisfies the desired property.
}
\end{proof}

\section{Rarity of triangle induced graphs}\label{convexsec}

We start by defining the function alluded to in Section \ref{iop}, and establish several of its properties. Using the latter we then obtain the estimate in \eqref{002}.
We start by defining a function $f:[0,\infty) \rightarrow (0,\infty)$ governed by the inequality in Lemma \ref{lemma 3.2}, by 
\begin{equation}\label{f1}
f(x) = \frac{\sqrt{2}}{3}(x+1)^{3/2}  + 3(x+1).
\end{equation}
$h$ will be essentially the inverse of $f$, which we now formally define.

Many of the forthcoming arguments establishing its properties are rather technical in nature, and can be safely skipped on first read.\\

Let $h: (10,\infty) \rightarrow (1,\infty)$ be the inverse function of $f$, restricted to $(10,\infty)$. {The number 10 is chosen so that $\inf_{y>10} h(y)>1$, which follows from the fact $f(1)<10$.}  Since $f$ is convex, $h$ is concave on $(10,\infty)$. {While our applications will only make use of the values of $h(y)$ for integers $y$, it would be convenient for expository reasons to regard $h$ as a function of real numbers which we define by setting: $h(y)=1$ for $2\leq y\leq 10$ and $h(y)=0$ for $y<2$. 

We next make some observations about the function $h,$ that will be of use in the sequel. {Notice that $h$ is monotone, and  since $h$ is concave on $(10,\infty),$ the mean value theorem implies that  for any $x,y\geq 10$,
\begin{align} \label{eq:h_subadd_10}
h(x)+h(y)  \geq  h(10 ) + h(x+y-10).
\end{align} }
Since $f(x) \geq \frac{\sqrt{2}}{3}x^{3/2},$ and recalling $h(y)\leq 1$ for $y\leq 10$, setting $a := (\frac{3}{\sqrt{2}})^{2/3}$, for all $y\geq 1$,
\begin{align} \label{h(y)}
 h(y) \leq ay^{2/3}.
\end{align}
Also, since $f(x) \geq \frac{\sqrt{2}}{3}(x+10)^{3/2}$ for large   $x$,  for large enough $y$,
\begin{align}\label{hy2}
h(y) + 10 \leq ay^{2/3}.
\end{align}
In addition, for any $\kappa>0$, for large enough $y>0$,
\begin{align} \label{hy}
h(y) > (a-\kappa)y^{2/3}.
\end{align}
Note that for $y>10$,
\begin{align} \label{derivative}
 h'(y) = f'(  h(y) )^{-1} =  \Big ( \frac{\sqrt{2}}{2}(h(y)+1)^{1/2} + 3 \Big )^{-1} .
\end{align} Thus, by \eqref{h(y)},  \eqref{hy} and the fact that $(\frac{\sqrt{2}}{2}a^{1/2})^{-1} = \frac{2}{3}a$ by our choice of $a$, for any $\kappa>0$, for large enough $y$,
\begin{align} \label{h'(y)}
(a-\kappa)  \frac{2}{3}y^{-1/3}\leq h'(y) \leq  (a+\kappa)  \frac{2}{3}y^{-1/3}.
\end{align}
Finally, let us examine the third derivative of $h$. By taking the derivative of $f(h(y))=y$ three times, we get
\begin{align*}
f'''(h(y))h'(y)^3 + 	3f''(h(y))h'(y)h''(y) + f'(h(y))h'''(y)=0.
\end{align*}
It is straightforward to verify that $f',f''>0,f'''<0 $. Also, since $h$ is increasing and concave on $(10,\infty)$, we have $ h''<0, h'>0$ on this interval. Thus, for   $y>10$,
\begin{align} \label{third}
h'''(y)>0.
\end{align}

We next record the following estimate. 
\begin{lemma} \label{lemma 3.0}
For any non-negative integers {$i$ and $m$},
\begin{align} \label{114}
i+ h(m-10i) \geq  h(m-10^5)  
\end{align} 
and
\begin{align} \label{1140}
h(m-10^5)   \geq    h(m) - h(10^6).
\end{align} 
\end{lemma}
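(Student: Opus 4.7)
I will prove both inequalities by elementary case analysis, using only monotonicity of $h$ and its concavity on $(10,\infty)$. The only quantitative input beyond monotonicity is \eqref{h(y)}: combined with the identity $a^{3}=(3/\sqrt{2})^{2}=9/2$, it yields $a(10i)^{2/3}\leq i$ whenever $i\geq 100a^{3}=450$, and hence in particular whenever $i\geq 10^{4}$. The threshold $i=10^{4}$ is exactly where $10i$ meets $10^{5}$, so the statement of \eqref{114} splits naturally there.

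\textbf{Proof plan for \eqref{114}.} When $i\leq 10^{4}$ the claim is immediate: the inequality $10i\leq 10^{5}$ gives $m-10i\geq m-10^{5}$, so monotonicity of $h$ already yields $h(m-10i)\geq h(m-10^{5})$. For $i>10^{4}$ the task is to show $h(m-10^{5})-h(m-10i)\leq i$. If $m-10i\leq 10$ (including the case $m<10i$, where $h(m-10i)=0$), then $m-10^{5}\leq 10+10(i-10^{4})\leq 10i$, and \eqref{h(y)} combined with the estimate above yields $h(m-10^{5})\leq a(10i)^{2/3}\leq i$ directly. If instead $m-10i>10$, I will apply concavity of $h$ on $(10,\infty)$: for any fixed $t>0$, the increment $x\mapsto h(x+t)-h(x)$ is non-increasing in $x$. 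Taking $t:=10i-10^{5}>0$ and passing to the limit $x_{0}\downarrow 10$ gives
\[
h(m-10^{5})-h(m-10i)\;\leq\; h(10+t)-\lim_{x_{0}\downarrow 10}h(x_{0})\;\leq\; h(10i)\;\leq\; a(10i)^{2/3}\;\leq\; i,
\]
where the second inequality uses $10+t\leq 10i$ together with monotonicity, and simply drops the nonnegative limit term.

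\textbf{Proof plan for \eqref{1140}.} If $m\leq 10^{6}$, the bound is trivial from monotonicity: $h(m)\leq h(10^{6})$, so $h(m)-h(10^{6})\leq 0\leq h(m-10^{5})$. If $m>10^{6}$, then $m-10^{5}>9\cdot 10^{5}$ lies well inside the concavity domain, so the same increment-decreasing principle applied with $t:=10^{5}$ gives
\[
h(m)-h(m-10^{5})\;\leq\; h(10+10^{5})-\lim_{x_{0}\downarrow 10}h(x_{0})\;\leq\; h(10^{5}+10)\;\leq\; h(10^{6}),
\]
the final inequality by monotonicity since $10^{5}+10\leq 10^{6}$.

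\textbf{Main delicacy.} The one point requiring care is the boundary behavior of $h$ at $y=10$: by the piecewise extension $h(10)=1$, whereas the right-limit $\lim_{y\downarrow 10}h(y)=f^{-1}(10)$ is strictly greater than $1$, so $h$ has a jump at the edge of its concavity domain. My proof sidesteps this by applying concavity strictly inside $(10,\infty)$ and then taking a one-sided limit $x_{0}\downarrow 10$; since this limit is nonnegative, simply dropping it only weakens the inequality and therefore remains valid. Once that is handled, both parts reduce to the single elementary estimate $a(10i)^{2/3}\leq i$ for $i\geq 10^{4}$, which is precisely the numerical calibration that makes the specific constants $10$, $10^{5}$, and $10^{6}$ appearing in the lemma fit together.
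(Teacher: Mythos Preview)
Your proof is correct and follows the same overall strategy as the paper: dispose of the easy cases by monotonicity, then use concavity of $h$ on $(10,\infty)$ for the rest. The execution differs in a useful way. The paper applies the mean value theorem together with the explicit derivative formula \eqref{derivative}, which forces several ad hoc numerical checks (e.g.\ $h(10^4)>600$ to get $h'(10^4)<0.1$ for \eqref{114}, and $h(10^6-10^5)>14000$ for \eqref{1140}); its case split for \eqref{114} is at $m=10i+10^4$. You instead invoke concavity in its ``decreasing increments'' form and pass to the one-sided limit $x_0\downarrow 10$, which eliminates the derivative entirely and reduces everything to the single estimate $a(10i)^{2/3}\leq i$ for $i\geq 10^4$, coming directly from \eqref{h(y)} and $a^3=9/2$. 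Your handling of the jump of $h$ at $y=10$ (working strictly inside $(10,\infty)$, taking the right limit, then dropping the nonnegative term) is exactly what is needed and is a point the paper's MVT argument never has to confront. The net effect is a slightly cleaner proof with fewer numerical inputs.
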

The numbers $10^5$ and $10^6$ have no significant importance and they are chosen to be simply sufficiently large constants.

\begin{proof}
We first prove \eqref{114}. {When $m \leq 10^5$, the RHS of \eqref{114} becomes 0, and when $ 10i \leq 10^5$,  $h(m-10i)\geq   h(m-10^5)  $ by the monotonicity of $h$, so in those cases the result follows trivially. Thus it remains to prove the result for $m > 10^5$ and $10 i > 10^5$.}

We next divide the analysis into two cases:  $m\geq 10i+10^4$ and  $m< 10i+10^4$.
In the case $m\geq 10i+10^4$,
by the mean value theorem, 
\begin{align*}
 h(m-10^5)- h(m-10i) \leq  (10i -10^5)  h'(m-10i) \leq {10  i  h'(10^4)}\leq  i
\end{align*}   
{(recall that $h'$ is decreasing on $(10,\infty)$ since $h$ is concave on that interval). The last inequality follows since $f(600)<10^4$ and hence $h(10^4)>600$ which along with \eqref{derivative} implies $h'(10^4) < 0.1$.}
On the other hand, if $ m\leq  10i+10^4$, then  by \eqref{h(y)}, when $m \geq 10^5$,
\begin{align*}
i \geq \frac{1}{10}(m-10^4) \geq am^{2/3} \geq  h(m-10^5).
\end{align*}
To prove
\eqref{1140}, it suffices to consider the case $m\geq 10^6$ since otherwise the LHS is non-negative while the RHS is non-positive. By the mean value theorem,
\begin{align*}
h(m) - h(m-10^5)  \leq 10^5  h'(m-10^5) \leq 10^5  h'(10^6-10^5) \leq  h(10^6).
\end{align*}
The last inequality follows from \eqref{derivative} by observing that $h(10^6) , h(10^6-10^5)>14000$.
\end{proof}

\subsection{Estimates using $h$}
Now given the function $h,$ we have the following estimate on the number of tree excess edges in a graph in terms of the number of triangles.
\begin{lemma}For any {connected graph} $G = (V,E)$, if $\Delta \geq 1$,
\begin{align} \label{key}
|E| - | V| \geq  h(\Delta).
\end{align}
\end{lemma}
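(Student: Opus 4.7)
The plan is to prove the inequality by splitting into cases according to the value of $\Delta$, matching the piecewise definition of $h$. The main input is Lemma \ref{lemma 3.2}, which we would like to invert using the relationship $h = f^{-1}$ on $(10, \infty)$; the small-$\Delta$ range requires a separate, purely graph-theoretic argument.

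First, consider the regime $\Delta > 10$. By Lemma \ref{lemma 3.2}, $\Delta \leq \frac{\sqrt{2}}{3}(|E|-|V|+1)^{3/2} + 3(|E|-|V|+1) = f(|E|-|V|)$. Since $\Delta > 10$ and $f$ is strictly increasing with $f(1) = \frac{22}{3} < 10$, we have $f(|E|-|V|) > 10 > f(1)$, so $|E|-|V| > 1$. On the interval where $f$ exceeds $10$, the function $h$ is a bona fide increasing inverse of $f$, and thus applying $h$ to both sides of $\Delta \leq f(|E|-|V|)$ yields $h(\Delta) \leq |E|-|V|$, as desired.

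Next, consider the range $2 \leq \Delta \leq 10$, where the extended definition gives $h(\Delta) = 1$. Here I would not use Lemma \ref{lemma 3.2} directly, since $f(0) = \frac{\sqrt{2}}{3} + 3 > 2$ and the lemma fails to force $|E| - |V| \geq 1$ by itself. Instead, observe that a connected graph with $|E| - |V| = -1$ is a tree and contains no cycle, and a connected graph with $|E| - |V| = 0$ is unicyclic and hence contains at most one triangle. Thus $\Delta \geq 2$ immediately implies $|E| - |V| \geq 1 = h(\Delta)$.

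Finally, for $\Delta = 1$ we have $h(\Delta) = 0$, and the claim reduces to $|E| \geq |V|$. This follows because a connected graph contains a cycle (here, the given triangle), forcing $|E| \geq |V|$. Assembling the three cases completes the proof. I don't anticipate a significant obstacle; the only mild subtlety is the boundary case $\Delta = 10$, which is handled by verifying that the graph-theoretic argument of the middle case covers it (since $h(10) = 1$ under the stated convention).
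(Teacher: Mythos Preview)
Your proof is correct and follows essentially the same approach as the paper's: the same three-case split according to whether $\Delta > 10$, $2 \le \Delta \le 10$, or $\Delta = 1$, with the first case handled by inverting Lemma \ref{lemma 3.2} and the remaining two by the elementary observation that a connected graph with $|E|-|V|\le 0$ has at most one triangle. Your treatment is slightly more explicit in places (e.g.\ verifying $f(1)<10$ and addressing the boundary $\Delta=10$), but there is no substantive difference.
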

\begin{proof}
The proof is a consequence of Lemma \ref{lemma 3.2}.  In fact, recalling $h:(10,\infty) \rightarrow (1,\infty)$ is the inverse of $f$, \eqref{key} is true for $\Delta > 10$. Also, for $2\leq \Delta \leq 10$,  $|E|-|V|\geq 1 =  h(\Delta)$. This is because if $|E|-|V|=0$, then the number of triangles in $G$ is at most 1 (we obtain at most one triangle once  one additional edge is added to a spanning tree, {this is where the connectivity is crucially used}).  Finally, for $\Delta=1$,  $|E|-|V| \geq 0 = h(\Delta)$, since $G$ cannot be a tree.
\end{proof}

\begin{remark}\label{remark1}
Observe that
\eqref{key} implies that for any $\TIG = (V,E)$ spanned by $\ell\geq 1$  triangles,
\begin{align} \label{key1}
|E| - | V| \geq  h(\ell).
\end{align}
This is because $\Delta \geq \ell$ and $h$ is monotone.
\end{remark}

\noindent
We now come to the following key estimate. 
For a positive integer $\ell$, let $F_\ell$ be the event that there exists  a triangle-induced subgraph (to be denoted $\TISG$) spanned by $\ell$ triangles. In the next lemma, we bound the probability of $F_\ell$.

\begin{lemma} \label{lemma 3.3}
Let $\mu>0$ be a constant.
For any  constant $\eta>0$,  for sufficiently large $n$ and any  $ 1  \leq   \ell\leq  n^{\frac{1}{10}-\mu}$,
 \begin{align*}
 \mathbb{P}(F_\ell) \leq   \left(\frac{1}{n}\right)^{(1-\eta) h(\ell) }.
 \end{align*}
\end{lemma}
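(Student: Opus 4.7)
The plan is to bound $\mathbb{P}(F_\ell)$ via Markov's inequality applied to the expected number of TISG copies in $G_n$ spanned by $\ell$ triangles, combined with a carefully structured union bound that exploits the dense-core extraction of Lemma~\ref{lemma 4.3}. For any labeled connected graph $H$ on $V_n$ with $v$ vertices and $e$ edges, the probability $\mathbb{P}(H\subseteq G_n)=(d/n)^e$, and by \eqref{key1} any TISG spanned by $\ell$ triangles satisfies $e-v\geq h(\ell)$, so the probabilistic side already supplies the desired factor $n^{-h(\ell)}$. The naive combinatorial count $\binom{n}{v}\binom{\binom{v}{2}}{e}\leq n^v v^{2e}$ carries a potentially large factor $v^{2e}$ which can spoil the bound when $v$ and $e$ are both large, so some structural input is needed.

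I would split the analysis based on the tree excess $t:=e-v+1$ of $H$, using a cutoff $t_\star:=\tfrac{1}{2}\xi^{-1/2}\ell^{2/3}$ for a small constant $\xi=\xi(\eta)>0$. In the low-excess regime $t\leq t_\star$, Lemma~\ref{lemma 4.3} extracts a small dense core $U\subseteq V(H)$ with $|U|\leq v_1^\star:=\xi^{-3/2}\ell^{1/3}$ and $\Delta(H[U])\geq(1-2\xi^{1/2})\ell$; by Lemma~\ref{lemmaholder}, this forces $|E(G_n[U])|\geq|E(H[U])|\geq E^\star:=\lceil a(1-2\xi^{1/2})^{2/3}\ell^{2/3}\rceil$ with $a=(3/\sqrt{2})^{2/3}$. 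Hence $F_\ell\cap\{t\leq t_\star\}$ implies the existence of some $U\subseteq V_n$ with $|U|\leq v_1^\star$ and $|E(G_n[U])|\geq E^\star$, whose probability is bounded via
\begin{align*}
\sum_{v_1=3}^{v_1^\star}\binom{n}{v_1}\binom{\binom{v_1}{2}}{E^\star}(d/n)^{E^\star}.
\end{align*}
Using $\binom{\binom{v_1}{2}}{E^\star}\leq(e v_1^2/(2E^\star))^{E^\star}$ --- a $\xi$-dependent constant raised to the $E^\star$ --- this reduces to an expression of order $n^{v_1^\star-E^\star}\cdot C_\xi^{E^\star}$. Since $v_1^\star=O(\ell^{1/3})\ll E^\star\sim a\ell^{2/3}\sim h(\ell)$, choosing $\xi$ small (relative to $\eta$) and $n$ large yields the bound $n^{-(1-\eta)h(\ell)}$.

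In the high-excess regime $t>t_\star$, the probabilistic factor $n^{-(e-v)}$ is substantially stronger than $n^{-h(\ell)}$, and I would handle this via a direct first moment over $(v,e)$ using $v\leq 3\ell$ and the constraint $\ell\leq n^{1/10-\mu}$. The crux is that $\log v\leq(1/10-\mu+o(1))\log n$, so the naive bound $v^{2e}d^e/n^{e-v}$ has logarithm at most $[2e(1/10-\mu+o(1))-(e-v)]\log n+O(e)$, which is $\leq-(1-\eta)h(\ell)\log n$ after summing over the polynomially-many pairs $(v,e)$ in this range. Small $\ell$ below a fixed constant is handled by a direct first-moment computation over the finitely-many isomorphism classes of TISGs with at most a constant number of triangles.

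The principal technical obstacle I anticipate is ensuring the high-excess regime bound holds uniformly for all $\ell\leq n^{1/10-\mu}$: since $v$ can be as large as $3\ell$, a finer subdivision may be required, potentially reapplying Lemma~\ref{lemma 4.3} at an adapted scale $\ell'=\Delta(H)\geq\ell$ when $t$ lies in an intermediate range. The constant $\xi=\xi(\eta)$ must be chosen so that the $\xi$-dependent factors $C_\xi^{E^\star}$ and the subpolynomial-in-$n$ counting overheads are both absorbable into the slack $n^{\eta h(\ell)}$, which is where the hypothesis $\ell\leq n^{1/10-\mu}$ enters decisively.
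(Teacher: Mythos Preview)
Your overall strategy matches the paper's exactly: split on the tree excess at threshold $t_\star=\tfrac12\xi^{-1/2}\ell^{2/3}$, handle the low-excess regime by extracting a dense core via Lemma~\ref{lemma 4.3} and union-bounding over small vertex sets, handle the high-excess regime by a direct first moment, and treat small $\ell$ separately. The low-excess and small-$\ell$ parts are fine.

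The gap is precisely where you flag it: your high-excess bound does not follow from $e-v\geq t_\star$ and $v\leq 3\ell$ alone. You claim
\[
2e\Big(\tfrac{1}{10}-\mu+o(1)\Big)-(e-v)\;\leq\;-(1-\eta)h(\ell),
\]
but a priori $e$ can be as large as $3\ell$ while $e-v$ sits near $t_\star\asymp\ell^{2/3}$, and then the left side is of order $+\ell$, not $-\ell^{2/3}$. The missing ingredient is Lemma~\ref{lemma 3.1}, which says that for a $\TIG$ one has $e\leq 5(e-v)+5$; this is exactly what rules out such configurations. The paper uses Lemma~\ref{lemma 3.1} to rewrite the per-graph bound as
\[
\frac{d^e v^{2e}}{n^{e-v}}\;\leq\;\Big(\frac{d^5 v^{10}}{n}\Big)^{e-v} d^5 v^{10},
\]
after which $v\leq 3\ell\leq 3n^{1/10-\mu}$ gives $d^5 v^{10}/n\leq C n^{-10\mu}\to 0$, and raising this to the power $e-v\geq b\ell^{2/3}-1$ (with $b$ large) beats $n^{-h(\ell)}$ directly. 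No iteration of Lemma~\ref{lemma 4.3} or finer subdivision is needed; once you feed in Lemma~\ref{lemma 3.1} your argument goes through as written.
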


\begin{proof}
For positive integers $\ell,v,e$, 
let
$F_{\ell,v,e}$ be the event that there exists a $\TISG$ spanned by  $\ell$ triangles with $v$  vertices and  $e$ edges.
The number of {subgraphs with $v$ vertices and $e$ edges is bounded by $\binom{n}{v} \binom{{v \choose 2}}{e} \leq n^v v^{2e}$.}   Since   $e \leq 5(e-v)+5$ {by Lemma \ref{lemma 3.1}}, \begin{align} \label{332}
 \mathbb{P}(F_{\ell,v,e}) \leq \frac{d^e}{n^{e}} n^vv^{2e}= \frac{d^e}{n^{e-v}} v^{2e} \leq   \Big  ( \frac{d^5v^{10}}{n}\Big)^{e-v}  d^5v^{10}.
\end{align}

\noindent
By \eqref{key1}, we have
\begin{align*}
\mathbb{P}(F_\ell) \leq \mathbb{P} ( \cup_{e-v\geq h(\ell)}  F_{\ell,v,e}).
\end{align*}
It turns out that a naive union bound applied to the RHS using \eqref{332} does not yield a sharp bound, unless $\ell$ is small. For large $\ell$,  we use Lemma \ref{lemma 4.3} instead.
The estimates for large and small $\ell$, {where large and small are defined in terms of a parameter $\xi$ to be defined shortly}, are obtained separately in the following two steps respectively. \\

\noindent
\textbf{Step 1.}  Fixing a large constant  $b$ to be chosen later, we have 
\begin{align}\label{split}
\mathbb{P}(F_\ell) \leq \mathbb{P} ( \cup_{ e-v\leq  b\ell^{2/3}-1}  F_{\ell,v,e}) + \mathbb{P} ( \cup_{e-v\geq  b\ell^{2/3}-1}  F_{\ell,v,e}) .
\end{align}
In the second term, a naive union bound effectively works  since there are many tree-excess edges. In order to control the first term, we first apply Lemma \ref{lemma 4.3} which states that the event
$\cup_{e-v \leq  b  \ell^{2/3}-1} F_{\ell,v,e}$  implies the existence of a  dense subgraph accounting for most of the triangles. {More precisely, for a sufficiently small constant $\xi>0$, set $b=\frac{1}{2}  \xi^{-1/2}$. Then, 
 by Lemma \ref{lemma 4.3}, under the event $\cup_{e-v \leq  b  \ell^{2/3}-1} F_{\ell,v,e}$,} there exists  a subgraph $G'$ such that
\begin{align*}
 |V(G')| \leq  \xi^{-3/2} \ell^{1/3}, \qquad 
|E(G')| \geq a(1-2\xi^{1/2})^{2/3} \ell^{2/3}.
\end{align*}
{Hence, by a union bound and  the fact that the number of graphs with $v$ vertices  is bounded by $2^{v^2}$, for any small enough $\xi>0$, for large enough $\ell$ and $n\geq 2^{\xi^{-5}}$},
\begin{align} \label{341}
 \mathbb{P}(\cup_{e-v \leq  b l^{2/3}-1}  F_{\ell,v,e})& \leq   n^{ \xi^{-3/2}  \ell^{1/3}} 2^{  \xi^{-3}   \ell^{2/3}} \Big(\frac{d}{n}\Big)^{  a(1- 2 \xi^{1/2} )^{2/3} \ell^{2/3} } \nonumber  \\
 & \overset{\eqref{h(y)}}{\leq}  {  n^{ \xi^{-3/2}  \ell^{1/3}} 2^{  \xi^{-3}  \ell^{2/3}}  \Big(\frac{d}{n}\Big)^{(1- 2  \xi^{1/2} )^{2/3} h(\ell) } } \nonumber \\
 &\leq  2^{  \xi^{-3}  \ell^{2/3}}  \Big(\frac{d}{n}\Big)^{(1- 2  \xi^{1/2} )h(\ell) } \leq  \Big(\frac{d}{n}\Big)^{(1-3 \xi^{1/2})h(\ell) } ,
\end{align}
{where in the second last inequality, we use the fact that $h(\ell)$ is at least of order $\ell^{2/3}$ (see \eqref{hy}) to absorb the $n^{ \xi^{-3/2}  \ell^{1/3}}$ term by reducing the exponent of the $d/n$ term from ${(1- 2  \xi^{1/2} )^{2/3} h(\ell) }$ to ${(1- 2  \xi^{1/2} )h(\ell) }$. To see why the last inequality holds, first note that this is equivalent to}
$
 2^{  \xi^{-3}  \ell^{2/3}}  \leq    (\frac{n}{d})^{  \xi^{1/2}h(\ell) } .
$  Since $h(\ell) > \frac{a}{2} \ell^{2/3}$ for large $\ell$ by \eqref{hy}, it suffices to check $
 2^{  \xi^{-3.5} }  \leq    (\frac{n}{d})^{   a/2 } ,
$ { which holds for small $\xi>0$ since $n\geq 2^{\xi^{-5}}$ (any exponent greater than 3.5 works, the exponent 5 is an arbitrary choice).}\\
\noindent
To bound the other term in \eqref{split}, namely $\mathbb{P} ( \cup_{e-v\geq  b\ell^{2/3}-1}  F_{\ell,v,e})$  it suffices to bound
$$ \displaystyle{\sum_{e\geq v+b \ell^{2/3}-1}  \mathbb{P}(F_{\ell,v,e}).}$$
Note that the number of summands is less than $(3\ell)^2$, since the number of edges and vertices are bounded by $3\ell$. Thus,
using \eqref{332},  for any $\kappa>0$, for large enough $\ell$,
\begin{align} \label{343}
   \sum_{e\geq v +b \ell^{2/3}-1}  \mathbb{P}(F_{\ell,v,e})   
 &   \leq  (3\ell)^2   \Big( \frac{d^5(3\ell)^{10}}{n}\Big)^{ b \ell^{2/3}-1}  d^5(3\ell)^{10}   \nonumber   \\
 &  {\leq} C \ell^{( 10+\kappa)b\ell^{2/3}+12} \Big(\frac{1}{n}\Big)^{ b\ell^{2/3}-1} .
\end{align}  
 Since $b\rightarrow \infty$ as $\xi \rightarrow 0$ and $\ell \leq n^{\frac{1}{10} - \mu}$, this quantity is bounded by {$(\frac{1}{n})^{a\ell^{2/3} } \overset{\eqref{h(y)}}{ \leq  }(\frac{1}{n})^{h(\ell) }$} for small enough $\xi,\kappa>0$.
Therefore, applying \eqref{341} and \eqref{343} to \eqref{split}, for small enough $\xi>0$, there exists  $\ell_0(\xi)>0$ such that for    $\ell\geq \ell_0(\xi)$ and large enough $n$, 
\begin{align*}
 \mathbb{P}(F_\ell) \leq  \Big(\frac{1}{n}\Big)^{(1-4 \xi^{1/2})h(\ell) } .
\end{align*}
\textbf{Step 2.}  For small $\ell$, a simple union bound using \eqref{332} suffices.
For $1\leq \ell \leq \ell_0(\xi)$ and sufficiently large $n$ (depending on $\xi$), {there exists a constant $C = C(\xi)>0$ such that
\begin{align*}
 \mathbb{P}(F_\ell) \leq    \sum_{e\geq v+h(\ell)}  \mathbb{P}(F_{\ell,v,e})    & \leq  (3\ell)^2 \Big  ( \frac{d^5(3\ell)^{10}}{n}\Big)^{h(\ell)}  d^5(3\ell)^{10}  \\
 &\leq C\Big(\frac{1}{n}\Big)^{h(\ell)}  \leq \Big(\frac{1}{n}\Big)^{(1-\xi^{1/2})h(\ell)}.
\end{align*}
}
\end{proof}

Finally we have all the ingredients in place to prove Theorem \ref{theorem 1}.

\section{Proof of Theorem \ref{theorem 1} via graph decomposition into $\TISG$s.} \label{secupper}
Recall that due to the results of Section \ref{seclower}, it only remains to prove the corresponding upper bounds.

\noindent
For $1\leq \ell_1\leq \cdots \leq \ell_m$, let $E_{\ell_1,\cdots,\ell_m}$ be the event that there exist {vertex-disjoint} $\TISG$s  spanned by $\ell_i$  triangles.
We start by observing that the occurrence of the event $\{N\geq k\}$ implies  the occurrence of the event   $E_{\ell_1,\cdots,\ell_m}$ for some $m\geq 1$, $1\leq \ell_1\leq \cdots \leq \ell_m$ and $\ell_1+\cdots+\ell_m =  k$. In fact, if {there are  $N$  triangles}, then there are disjoint $\TISG$s  $H_1,\cdots,H_r$ in $G$ such that $H_i$ contains exactly $\ell_i'$  triangles with $1\leq \ell_1'\leq \cdots \leq \ell_r'$  and   $\ell_1'+\cdots+\ell_r'=N$. Since $N\geq k$, one can choose {$ \ell_i$ connected triangles in each $H_i$, such that $0 \leq \ell_i \leq \ell_i'$, $1\leq \ell_1\leq \cdots \leq \ell_m$, $m\leq r$,  and $\ell_1+\cdots+\ell_m=k$.} {Denoting by $H_1',\cdots,H_m'$  $\TISG$s spanned by each such collection of $\ell_1,\cdots,\ell_m$ triangles\footnote{We say that $H$ is the  $\TISG$ spanned by $\ell$ triangles $T_1,\cdots,T_\ell,$ if $H$ is connected and is the union of $T_1,\cdots,T_\ell$.}    respectively,  by considering the (vertex-disjoint) union of   these $\TISG$s,   one can deduce that  $\{N\geq k\}$ implies  the occurrence of $E_{\ell_1,\cdots,\ell_m}$.}

We now move towards upper bounding the probability $\mathbb{P} (E_{\ell_1,\cdots,\ell_m})$.
We assume that $1=\ell_1=\cdots =\ell_j < \ell_{j+1} \leq \cdots \leq \ell_m$.
By BK inequality,
\begin{align} \label{113}
\mathbb{P}(E_{\ell_1,\cdots,\ell_m}) \leq  \mathbb{P}(E_{1,\cdots,1})  \mathbb{P}(F_{\ell_{j+1}}) \cdots   \mathbb{P}(F_{\ell_m})
\end{align}
(there are $j$  1s in  $E_{1,\cdots,1}$).
We now obtain a rather precise estimate of $\mathbb{P}(E_{1,\cdots,1})$. The probability that there exist $j$  vertex-disjoint triangles is bounded by
\begin{align*}
\Big(\frac{d}{n}\Big)^{3j} \frac{1}{j!} {n \choose 3} {n-3 \choose 3} \cdots {n-3j+3 \choose 3} \leq  \frac{1}{j!} \Big(\frac{d^3}{6}\Big)^j.
\end{align*}
{Note that it is crucial that we do not  use the upper bound $\P(E_1)^j$ here, which would only give an upper bound $c^j$ ($c>0$ is a constant) on the probability  $\mathbb{P} ( E_{1,\dots, 1})$.}
This along with Lemma \ref{lemma 3.3} {(which is applicable since   $\ell_i \leq k  \leq n^{\frac{1}{10}-\mu}$ for $i=j+1,\cdots,m$)}, yield, for any $\eta>0$, for large enough $n$,
\begin{align} \label{E}
\mathbb{P}(E_{\ell_1,\cdots,\ell_m}) \leq  \mathbb{P}(E_{1,\cdots,1})  \mathbb{P}(F_{\ell_{j+1}}) \cdots   \mathbb{P}(F_{\ell_m}) \leq  \frac{1}{j!} \Big(\frac{d^3}{6}\Big)^j   \Big(\frac{1}{n}\Big) ^{ (1-\eta)(h(\ell_{j+1}) + \cdots + h(\ell_m)) }  .
\end{align}
Thus,
\begin{align}   \label{111}
 \mathbb{P}(N\geq k) \leq  \sum_{j=0}^k  \sum_{{\ell_{j+1} + \cdots + \ell_m  =  k-j}\atop {2\leq  \ell_{j+1}\leq \cdots\leq  \ell_m}}   \frac{1}{j!} \Big(\frac{d^3}{6}\Big)^j  \Big(\frac{1}{n}\Big) ^{(1-\eta) (h(\ell_{j+1}) + \cdots + h(\ell_m)) } .
\end{align}
{Recall from \eqref{eq:h_subadd_10}, that for any $x,y\geq 10$,}
\begin{align*}
h(x) + h(y) \geq h(10) + h(x+y-10) = 1+h(x+y-10).
\end{align*}
{We then apply this inequality repeatedly to $\ell_{j+1}, \cdots, \ell_m$  until there is at most one element greater than 10. Let us denote by $i$ the number of elements eventually less than or equal to 10. Since this procedure preserves the total sum to be equal to $k-j$, by the monotonicity of $h$, we obtain}
\begin{align*}
h(\ell_{j+1}) + \cdots + h(\ell_m)  \geq  i + h(k-j-10i).
\end{align*}
 Thus, applying Lemma \ref{lemma 3.0} and the fact that the number of partitions  of $n$ is bounded by $e^{\alpha\sqrt{n}}$ for some constant $\alpha>0$ (\cite{partition}), we get
{
 \begin{align} \label{110}
 \sum_{{\ell_{j+1} + \cdots + \ell_m  =  k-j}\atop {2\leq  \ell_{j+1}\leq \cdots\leq  \ell_m}}   \frac{1}{j!} \Big(\frac{d^3}{6}\Big)^j  \Big(\frac{1}{n}\Big) ^{(1-\eta) (h(\ell_{j+1}) + \cdots + h(\ell_m)) } \leq   \frac{1}{j!} \Big(\frac{d^3}{6}\Big)^j   e^{\alpha\sqrt{k-j}}    \Big(  \frac{1}{n}\Big)^{(1-\eta) h(k-j - 10^5)}  .
 \end{align}
 }
Hence,
the RHS of  \eqref{111} is  bounded by
\begin{align}  \label{112}
  \sum_{j=0}^{k}   \frac{1}{j!} \Big(\frac{d^3}{6}\Big)^j   e^{\alpha\sqrt{k-j}}    \Big(  \frac{1}{n}\Big)^{(1-\eta) h(k-j - 10^5)}  .
\end{align}
{
We now bound this in the cases  $k^{1/3} \log k< (a- \delta) \log n$ and  $k^{1/3} \log k> (a+ \delta) \log n$ where the $j \approx k$ and  $j \approx 0$ are the dominating terms respectively.} \\

\noindent
\textbf{Case 1.}
 $k^{1/3} \log k< (a- \delta) \log n$.

\noindent
We show that each term  in \eqref{112} is bounded by the term with $j=k$ up to a small multiplicative factor. Precisely,
we claim that there exists $a_j$ satisfying
\begin{align}  \label{116}
   \frac{1}{j!} \Big(\frac{d^3}{6}\Big)^j  e^{\alpha \sqrt{k-j}}   \Big(\frac{1}{n}\Big) ^ {(1-\eta)   h(k-j - 10^5) }  \leq    e^{a_j} \frac{1}{k!} \Big(\frac{d^3}{6}\Big)^k,
\end{align}
and further, for large enough $n$,
\begin{align} \label{117}
\sum_{j=0}^{k} e^{a_j} \leq Ce^{C\log k },
\end{align}
where $C = C(\delta,\eta)>0$ is a constant.  Clearly this finishes the argument, as, by \eqref{117}, the summation \eqref{112} is bounded by
\begin{align*}
Ce^{C\log k}  \frac{1}{k!} \Big(\frac{d^3}{6}\Big)^k  \leq  e^{-k\log  k + ck}
\end{align*}
for some constant $c>0$ depending only on $d$ (not depending on $\delta$) and large enough $k$.

Now to prove \eqref{116}, by taking a ratio of the LHS and RHS of the same,  recalling $d_0= \frac{d^3}{6}$ and  using $\frac{k!}{j!} \leq k^{k-j}$, we note that the quantity $a_j$ can be chosen as
\begin{align} \label{aj}
a_j :=  (k-j) \log (k/d_0)  + \alpha\sqrt{k-j} - (1-\eta)  h(k-j - 10^5)   \log n  .
\end{align}
Now, since by \eqref{h(y)} and  \eqref{hy}, $h(y)$ is asymptotically  $ay^{2/3}$, there exists  $M = M(\eta,\delta)>0$ such that for $k-j \geq  M$,
\begin{align*}
h(k-j - 10^5) \geq (1-\eta) h(k-j ) \geq  (1-\eta) (a-0.5\delta) (k-j)^{2/3},
\end{align*}
which implies
\begin{align*}
a_j &\leq (k-j) \log (k/d_0)  + \alpha\sqrt{k-j} - (1-\eta) ^2 (a-0.5\delta) (k-j)^{2/3}\log n\\
& \leq  (k-j)^{2/3} \left((k-j)^{1/3}  \log (k/d_0)+\alpha - (1-\eta)^2 (a- 0.5\delta) \log n \right).
\end{align*}
Since $k^{1/3}\log k < (a-\delta) \log n$,  for sufficiently small $\eta>0$, for $k-j \geq  M$,
\begin{align} \label{131}
a_j \leq  - (k-j)^{2/3}   \cdot 0.1    \delta  \log n  <0.
\end{align}

\noindent
Whereas, for $k-j\leq M$,  {by the expression of $a_j$ in \eqref{aj}},
\begin{align} \label{132}
a_j  \leq  M \log (k/d_0)  +   \alpha \sqrt{M}  .
\end{align}
Hence, by \eqref{131} and \eqref{132}, { 
 \begin{align*}
 \sum_{j=0}^{k} e^{a_j} =  \sum_{j=0}^{k-M} e^{a_j} + \sum_{j=k-M+1}^{k} e^{a_j}  \leq k +  Me^{ M \log (k/d_0)  +   \alpha \sqrt{M}  } \leq Ce^{C\log k},
\end{align*}   
which implies \eqref{117}.}

\noindent
\textbf{Case 2.}
$k^{1/3} \log k> (a+ \delta) \log n$.\\
By \eqref{1140}, the summation \eqref{112} is bounded by
\begin{align} \label{119}
  \sum_{j=0}^{k}   \frac{1}{j!} \Big(\frac{d^3}{6}\Big)^j   e^{\alpha\sqrt{k-j}}    \Big(  \frac{1}{n}\Big)^{(1-\eta) (h(k-j) - h(10^6))}. 
\end{align}
{
 Recalling $d_1= \frac{ed^3}{6}$ and  by Stirling's formula \eqref{stirling},  this is bounded by
 \begin{align} \label{125}
   \sum_{j=0}^{k}   \Big (\frac{d_1}{j}\Big)^j   e^{\alpha\sqrt{k}}    \Big(  \frac{1}{n}\Big)^{(1-\eta) (h(k-j) - h(10^6))}
 \end{align}
{(we set $(\frac{d_1}{0})^0 := 1$)}.
We claim that for small enough constant $\rho>0$, for sufficiently large $n$, each term above can be bounded by
\begin{align} \label{127}
C e^{\alpha\sqrt{k}} \Big(\frac{1}{n}\Big)^{(1-\eta)(h(k-\rho k) - h(10^6))}.
\end{align}
{
We first conclude the proof assuming this claim. Take  $\rho>0$ small enough so that for large $k$,
\begin{align*}
(1-\eta) (h(k-\rho k) - h(10^6)) > (1-2\eta)h(k).
\end{align*}   
This is possible since $h(y) \approx ay^{2/3}$ (see \eqref{h(y)} and \eqref{hy}). Hence,   \eqref{125} is bounded by
\begin{align*}
C k \cdot e^{\alpha \sqrt{k}}   \Big(\frac{1}{n}\Big)^{(1-\eta) (h(k-\rho k) - h(10^6)) } \leq C e^{\alpha \sqrt{k}} k    \Big(  \frac{1}{n}\Big)^{(1-2\eta)h(k) } \leq Ce^{h(k)}   \Big(  \frac{1}{n}\Big)^{(1-2\eta)h(k) } \leq    \Big(  \frac{1}{n}\Big)^{(1-3\eta)h(k) },
\end{align*}  
which concludes the proof.

\noindent
 Therefore, it suffices to  verify the claim, i.e.,  each term in \eqref{125} is bounded by \eqref{127}. This is equivalent to 
\begin{align}  \label{118}
 (1-\eta) (h(k-\rho k) - h(k-j)) \log n   - j \log (j/d_1)\leq  \log C.
\end{align}}
This immediately holds for $j < \rho k$, since in this case there exists $C_1 = C_1(d_1)>0$ such that
\begin{align*}
\text{LHS in} \ \eqref{118} \leq -j\log (j/d_1) \leq C_1.
\end{align*} 
Thus, it only remains to verify the same for  $j \geq \rho k$.
Towards this end, we regard the LHS in \eqref{118} as a function of $j$.
Let $$g(x) := - (1-\eta) h(k-x)  \log n + (1-\eta) h(k-\rho k)  \log n  - x \log (x/d_1).$$
Then,
\begin{align} \label{g'}
g'(x) =  (1-\eta) h'(k-x)     \log n  - \log (x/d_1) -1.
\end{align} 
Recall that by the upper bound for $h'$ in \eqref{h'(y)}, for large enough $k$, $h'(k-\rho k)  <(a+\delta) \frac{2}{3}\frac{1}{(k-\rho k)^{1/3}} $. {Hence, for small  $\rho>0$ and   large  enough $k$,
\begin{align*}
g'(\rho k) & = (1-\eta) h'(k-\rho k)     \log n  - \log (\rho k /d_1) -1 \\
&\leq (1-\eta) (a+\delta) \frac{2}{3}\frac{1}{(k-\rho k)^{1/3}}  \log n  -  \log ( \rho k/d_1 ) -1 \\
& \leq \frac{2}{3} \Big(\frac{k}{k-\rho k}\Big)^{1/3} \log k - \log ( \rho k/d_1 ) -1  \\
&= \Big( \frac{2}{3}\Big(\frac{1}{1-\rho}\Big)^{1/3} - 1\Big) \log k - \log (\rho/d_1)  - 1 < 0.
\end{align*}
Note that  we used  the condition $(a+\delta)\log n <   k^{1/3} \log k$ in the second inequality above.}

\noindent
Using this fact, we  next show that
\begin{align} \label{126}
\max_{x\in [\rho k, k-10]} g(x)  =  \max(g(\rho k),g(k-10)).
\end{align}
By \eqref{third},  $h'(k-x)$ is convex  on $(\rho k, k-10)$, which also implies the convexity of $g'(x)$ (see the expression for $g'$ in \eqref{g'}). Since $g'(\rho k)<0$, there are only two possible scenarios:  either $g'<0$ on {$(\rho k, k-10)$}, or $g'<0$ on $(\rho k,x_0)$ and $g'>0$ on $(x_0, k-10)$ for some  $x_0\in (\rho k, k-10)$. This implies   \eqref{126}.

Thus, in order to show \eqref{118},   it suffices to consider the two cases, $j \geq k-10$ and $j=\rho k$.
}
In the case $j=k-z$ with $0\leq z\leq 10$,   the LHS of  \eqref{118} can be bounded as
{\begin{align*}
 (1-\eta) &(h(k-\rho k) - h(k-j)) \log n   - j \log (j/d_1) \\
 &\leq 
  (1-\eta)   h(k-\rho k)    \log n   - (k-10) \log ((k-10)/d_1)  \\
  & \leq   h(k) \log n - k\log k + Ck \leq a  k^{2/3} \log n - k\log k+Ck \leq 0.
\end{align*}
Here, we used the condition $k^{1/3} \log k> (a+ \delta) \log n$ to deduce the last inequality.}

\noindent
Whereas, in the case $j=\rho k$, \eqref{118} immediately holds, completing the verification of the claim.

\qed

We now proceed towards proving Theorem \ref{theorem 2}. 

\section{Proof of Theorem \ref{theorem 2}} \label{secstructure}
{We start by recalling the events from Theorem \ref{theorem 2}:
\begin{align*}
\mathcal{D}_{\e}:=\{  \text{There exist at least}  \    (1-\varepsilon) k \  \text{vertex-disjoint triangles} \}
\end{align*}
and
\begin{align*}
\mathcal{C}_{\e}:= \{ \text{There exists} \ V'\subseteq V_n  \  \text{such that} \ |V'| \leq   (1+ \varepsilon) 6^{1/3}k^{1/3}, \Delta(V') \geq  (1-\varepsilon ) k \}.
\end{align*}
 We now define an auxiliary event $\mathcal{T}_\e$, as follows:
\begin{align*}
\mathcal{T}_\e:= \{ \text{There exists a triangle-induced subgraph spanned by at least}  \ (1-2\e)k \  \text{many triangles}\}.
\end{align*}
}

Note that $\Delta(G) \leq \frac{1}{6}|G|^3$ for any graph $G$, and it is not too difficult to show that an ``almost" equality holds if and only if $G$ is ``close" to a clique for appropriate formulation of the terms in quotes. Hence,  $\mathcal{C}_\e$ implies that  $V'$ looks like a clique containing almost $k$ many triangles.}

The proof of Theorem \ref{theorem 2} has broadly two parts. The first part is proving that conditional on the event $\{N\ge k\},$ the event $\cD_\e \cup \cT_\e$ holds with high probability (see Proposition \ref{prop}).  The next part proves that  conditionally on $\cT_\e \cap \{N\ge k\}$,  the event $\cC_{12\e^{1/4}}$ holds  with high probability (see Proposition \ref{prop2}). {By these two parts, with high probability conditionally on $\{N\geq k\}$, the event $\cD_\e \cup \cC_{12\e^{1/4}}$ holds. By the monotonicity of the events $\cD_\e$ and $\cC_\e$, in $\e$,  one can conclude that  conditionally on $\{N\ge k\},$  $\cD_\e \cup \cC_\e$ holds with high probability.

Statements \eqref{012} and \eqref{013} follow from Theorem \ref{theorem 1}, along with the fact that   for small enough $\e>0$, it is unlikely that $\cD_\e \cap \cC_\e$ happens  given $\{N\ge k\}$. Validity of   \eqref{012} and \eqref{013} for any $\e>0$ follows again by the monotonicity of the events $\cD_\e$ and $\cC_\e$ in $\e$.}

\begin{proposition} \label{prop}
The following holds for sufficiently small $\e>0$.
For any $k$ such that $k\rightarrow \infty$ and $k\leq n^{\frac{1}{10}-\mu}$,
\begin{align*}
\lim_{n\rightarrow \infty} \mathbb{P}(\cD_\e \cup \cT_\e | N\geq  k )  =  1.
\end{align*}
\end{proposition}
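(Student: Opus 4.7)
The plan is to refine the decomposition-based analysis of Section~\ref{secupper}, exploiting the extra restrictions imposed by $\neg(\mathcal{D}_\e \cup \mathcal{T}_\e)$. Under $\neg \mathcal{T}_\e$, every connected component of the subgraph of $G_n$ spanned by the triangle edges has strictly fewer than $(1-2\e)k$ triangles. Under $\neg \mathcal{D}_\e$, the number of such components is at most $(1-\e)k$, since each contributes at least one vertex-disjoint triangle. Consequently, in the $\TISG$ decomposition of $\{N\ge k\}$ used at the beginning of Section~\ref{secupper}, the resulting partition $1 \le \ell_1 \le \cdots \le \ell_m$ with $\sum\ell_i = k$ satisfies $\ell_m \le (1-2\e)k$ and $j \le m < (1-\e)k$, where $j$ denotes the number of $\ell_i$'s equal to $1$. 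Calling such partitions \emph{bad}, the bound \eqref{E} and a union bound yield
\begin{align*}
\mathbb{P}\big(\{N\ge k\} \cap \neg(\mathcal{D}_\e \cup \mathcal{T}_\e)\big) \le \sum_{(\ell_i)\text{ bad}} \frac{1}{j!}\Big(\frac{d^3}{6}\Big)^j \Big(\frac{1}{n}\Big)^{(1-\eta) \sum_{i>j} h(\ell_i)},
\end{align*}
which I aim to show is $o(\mathbb{P}(N\ge k))$ by comparison with the lower bounds of Section~\ref{seclower}.

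In the subcritical regime $k^{1/3}\log k < (a-\delta)\log n$, where $\mathbb{P}(N\ge k) \ge e^{-k\log k - ck}$, only the restriction $j < (1-\e)k$ is needed. I will reuse the quantity $a_j$ from Case~1 of Section~\ref{secupper}, together with the estimate $a_j \le -(k-j)^{2/3}\cdot 0.1\delta \log n$ in \eqref{131}, valid whenever $k-j \ge M$. For bad $j$, $k-j \ge \e k \ge M$ for large $k$, so $a_j \le -c\delta\e^{2/3}k^{2/3}\log n$; using the subcritical hypothesis $k^{2/3}\log n \ge k\log k/(a-\delta)$ and the standard partition count $e^{\alpha\sqrt{k-j}}$, the bad sum is bounded by $e^{-c\e^{2/3}k\log k}\cdot \frac{1}{k!}(d^3/6)^k = o(e^{-k\log k - ck})$.

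In the supercritical regime $k^{1/3}\log k > (a+\delta)\log n$, where $\mathbb{P}(N\ge k) \ge n^{-(1+\e')ak^{2/3}}$ and the dominant unconstrained configuration ($j=0$, $\ell_m=k$, from Case~2 of Section~\ref{secupper}) is now excluded, the key ingredient is a uniform concavity gain. Specifically, I plan to show that every bad partition satisfies
\begin{align*}
\sum_{i>j} h(\ell_i) \ge (1+\tilde\gamma(\e))(1-o(1))\,ak^{2/3}, \qquad \tilde\gamma(\e) := (1-2\e)^{2/3} + (2\e)^{2/3} - 1 > 0,
\end{align*}
via case analysis on $\ell_m$. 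If $\ell_m \in [2\e k,(1-2\e)k]$, apply subadditivity of $h$ to the split $\{\ell_m\}\cup\{\text{rest}\}$, giving $\sum h(\ell_i) \ge h(\ell_m) + h(k-j-\ell_m)$, and use $h(y) \sim ay^{2/3}$ from \eqref{h(y)}--\eqref{hy} to lower-bound this by $ak^{2/3}\min_{x\in[2\e,1-2\e]}(x^{2/3} + (1-x)^{2/3}) = ak^{2/3}(1+\tilde\gamma(\e))$. If $\ell_m < 2\e k$, then at the extremal partition with $\lfloor(k-j)/\ell_m\rfloor$ copies of $\ell_m$ plus a remainder, one has $\sum h(\ell_i) \gtrsim (k/\ell_m)\,h(\ell_m) \sim ak\,\ell_m^{-1/3} \ge ak^{2/3}(2\e)^{-1/3} \ge (1+\tilde\gamma(\e))ak^{2/3}$ for small $\e$. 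Equipped with this uniform gain, the $j \le 2\e k$ contribution to the bad sum is at most $n^{-(1+\tilde\gamma(\e)/(2a))ak^{2/3}}$; for $j > 2\e k$, the factorial decay $\frac{1}{j!}(d^3/6)^j \le e^{-2\e k\log(2\e k) + O(k)}$ takes over and beats $n^{-(1+\e')ak^{2/3}}$ in the supercritical regime (where $k\log k \gg ak^{2/3}\log n$). Choosing $\e'$ small relative to $\tilde\gamma(\e)/a$ concludes the argument.

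The main obstacle I anticipate is establishing the uniform concavity gain in the supercritical case: the two lower bounds for $\sum h(\ell_i)$ (the ``two-element split'' and the ``many-terms'' bound) must be combined via a careful case analysis on $\ell_m$, and the lower-order corrections to $h(y) \sim ay^{2/3}$ (controlled by \eqref{hy2} and \eqref{h'(y)}) must be tracked to make the bound uniform in $k$. Handling the transition $j \approx 2\e k$, where the concavity gain and the factorial decay both become relevant, is also delicate but amounts to standard bookkeeping.
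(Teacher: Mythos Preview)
Your approach is close in spirit to the paper's---both exploit the restrictions $j<(1-\e)k$ and $\ell_m\le(1-2\e)k$ on the $\TISG$ decomposition---but the organization (split by regime first, then by $j$) introduces two genuine gaps that the paper's organization (split by $j$ first, then by regime) avoids.

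First, your concavity-gain claim for $j\le 2\e k$ is false at the boundary. You write the ``rest'' as $k-\ell_m$ and minimize $x^{2/3}+(1-x)^{2/3}$ over $x\in[2\e,1-2\e]$, but the rest actually sums to $k-j-\ell_m$. Take $j=2\e k$ and $\ell_m=(1-2\e)k$ (a legitimate bad partition for $\e<1/3$): the rest is empty and $\sum_{i>j}h(\ell_i)=h((1-2\e)k)\sim a(1-2\e)^{2/3}k^{2/3}<ak^{2/3}$, so there is no gain whatsoever. The paper sidesteps this by taking the $j$-threshold at $\e k$ rather than $2\e k$: in Step~2 ($j\le\e k$) the rest is at least $k-\e k-(1-2\e)k=\e k$, which is what makes \eqref{115} go through; meanwhile Step~1 handles $j\in[\e k,(1-\e)k]$ separately, combining the factorial and the $(1/n)^{h(k-j)}$ factors via the single-variable analysis of $z\mapsto z+\chi(1-z)^{2/3}$.

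Second, your sub/super-critical bifurcation leaves the near-critical window $k^{1/3}\log k\approx a\log n$ uncovered, and the proposition is stated for all $k$. Your subcritical bound via \eqref{131} carries a factor $\delta$ and degenerates as $\delta\to 0$; your supercritical treatment of $j>2\e k$ invokes only the factorial decay and the hypothesis $k\log k\gg ak^{2/3}\log n$, which gives $e^{-2\e k\log k}\approx n^{-2\e ak^{2/3}}$ near criticality---not small enough to beat $n^{-(1+\e')ak^{2/3}}$ since $2\e<1$. What you are missing is precisely the paper's Step~1: retaining the $(1/n)^{(1-\eta)h(k-j)}$ factor alongside the factorial gives a combined exponent $\approx ak^{2/3}(z+(1-z)^{2/3})$ for $j=zk$, and $z+(1-z)^{2/3}>1$ uniformly on $[\e,1-\e]$ regardless of regime.

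A smaller point: the ``subadditivity of $h$'' you invoke is not valid even for large arguments (for instance $2h(11)<h(22)$, since $h$ is concave on $(10,\infty)$ but $h(10)=1\ne 0$). The paper works instead with the weaker inequality \eqref{eq:h_subadd_10} and Lemma~\ref{lemma 3.0}, absorbing the resulting $h(10^5)$-type corrections into lower-order terms.
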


{\begin{proposition}\label{prop2}
The following holds for sufficiently small $\e>0$.
For any $k$ such that $k\rightarrow \infty$ and $k\leq n^{\frac{1}{10}-\mu}$,
\begin{align*}
\lim_{n\rightarrow \infty} \mathbb{P}( \cT_\e \cap  \cC^c_{12\e^{1/4}} |  N\geq k )  =  0.
\end{align*}
\end{proposition}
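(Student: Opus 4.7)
The plan is to first extract a structural consequence from the event $\cT_\e \cap \cC^c_{12\e^{1/4}}$ via the stability result Proposition \ref{equality holder}, and then bound the probability of the resulting rare structure. Let $H$ denote a triangle-induced subgraph spanned by $\ell \geq (1-2\e)k$ triangles furnished by $\cT_\e$. Apply Proposition \ref{equality holder} to $H$ with the role of $k$ in the statement played by $6k$ and parameter $2\e$: since $\Delta(H) \geq \ell \geq \tfrac{1}{6}(1-2\e)(6k)$ and $\tfrac{1}{2}(6k)^{2/3} = ak^{2/3}$ (using $a = (3/\sqrt{2})^{2/3}$), if additionally $|E(H)| \leq (1+2\e)ak^{2/3}$, then there exists $V' \subseteq V(H)$ with $|V'| \leq (1+6\cdot 2^{1/4}\e^{1/4})(6k)^{1/3}$ and $\Delta_{G_n}(V') \geq \Delta_H(V') \geq (1-6\cdot 2^{1/4}\e^{1/4})k$. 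Since $6\cdot 2^{1/4} < 12$, this yields $\cC_{12\e^{1/4}}$, contradicting the assumption. Hence on $\cT_\e \cap \cC^c_{12\e^{1/4}}$ we must have $|E(H)| > (1+2\e)ak^{2/3}$; similarly, taking $V' = V(H)$ directly forces $|V(H)| > (1+12\e^{1/4})6^{1/3}k^{1/3}$.

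In the subcritical regime $k^{1/3}\log k < (a-\delta)\log n$, the conclusion follows from a crude estimate on $\cT_\e$ alone. Lemma \ref{lemma 3.3} yields $\mathbb{P}(F_\ell) \leq n^{-(1-\eta)h(\ell)}$, and summing over $\ell \geq (1-2\e)k$ combined with $h(\ell) \geq (1-\eta)(1-2\e)^{2/3}ak^{2/3}$ gives $\mathbb{P}(\cT_\e) \leq n^{-(1-\eta')(1-2\e)^{2/3}ak^{2/3}}$ for $\eta'$ arbitrarily small. Comparing with $\mathbb{P}(N\geq k) \geq e^{-k\log k - ck}$ from Proposition \ref{lower} and using $k^{1/3}\log k < (a-\delta)\log n$, a direct computation shows $\mathbb{P}(\cT_\e)/\mathbb{P}(N\geq k) \to 0$, so a fortiori $\mathbb{P}(\cT_\e \cap \cC^c_{12\e^{1/4}}\mid N\geq k) \to 0$.

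In the supercritical regime $k^{1/3}\log k > (a+\delta)\log n$, where $\mathbb{P}(N\geq k) \geq n^{-(1+\e_0)ak^{2/3}}$ for arbitrarily small $\e_0 > 0$, we must use the extra constraints on $|E(H)|$ and $|V(H)|$ derived above. Mirroring the two-step scheme from the proof of Lemma \ref{lemma 3.3}: when the tree-excess satisfies $e - v \geq \tfrac{1}{2}\xi^{-1/2}\ell^{2/3}$ (for a small parameter $\xi$), the naive union bound $\mathbb{P}(F_{\ell,v,e}) \leq n^v v^{2e}(d/n)^e$ together with $v \leq 2\ell+1 = O(k)$ and $k \leq n^{1/10-\mu}$ already delivers the required decay from the dominant $n^{-(e-v)}$ factor. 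In the complementary case, Lemma \ref{lemma 4.3} yields a dense subgraph $G' \subseteq H$ with $|V(G')| \leq \xi^{-3/2}\ell^{1/3}$ and $\Delta(G') \geq (1-2\xi^{1/2})\ell \geq (1-O(\xi^{1/2}+\e))k$. Applying Proposition \ref{equality holder} now to $G'$, if $|E(G')| \leq (1+\e)ak^{2/3}$ one would again extract a subset realizing $\cC_{12\e^{1/4}}$ (provided $\xi$ is chosen small enough relative to $\e$), contradicting $\cC^c_{12\e^{1/4}}$. Therefore $|E(G')| > (1+\e)ak^{2/3}$, and the bound $\mathbb{P}(\exists G') \leq n^{|V(G')|}2^{|V(G')|^2}(d/n)^{|E(G')|} \leq n^{-(1+\delta)ak^{2/3}}$ for some $\delta > 0$ follows exactly as in Step 1 of Lemma \ref{lemma 3.3}, since $n^{O(k^{1/3})}$ and $2^{O(\xi^{-3}k^{2/3})}$ are absorbed into the main exponent for large $n$. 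Dividing by $\mathbb{P}(N\geq k) \geq n^{-(1+\e_0)ak^{2/3}}$ with $\e_0 < \delta$ completes the argument.

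The main technical obstacle is the double application of Proposition \ref{equality holder}, first to $H$ and then to $G'$. One must carefully track the error parameters across the two applications so that the final subset $V'$ still satisfies the $12\e^{1/4}$ threshold of $\cC_{12\e^{1/4}}$; this requires choosing $\xi$ sufficiently small compared to $\e$ and verifying that the compounded $(1+O(\xi^{1/2}+\e))$-type factors do not push the vertex count of $V'$ beyond the allowed value or the triangle count below $(1-12\e^{1/4})k$.
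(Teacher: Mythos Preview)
Your core argument is correct and matches the paper's, but the write-up is over-engineered in two ways that you should streamline.

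First, the opening paragraph---applying Proposition \ref{equality holder} directly to $H$---is dead weight. The constraints $|E(H)| > (1+2\e)ak^{2/3}$ and $|V(H)| > (1+12\e^{1/4})6^{1/3}k^{1/3}$ that you derive there are never used in what follows; a $\TISG$ spanned by $\ell$ triangles can have up to $3\ell$ edges and $2\ell+1$ vertices, so these bounds give no traction. Only the \emph{second} application, to the dense subgraph $G'$ extracted via Lemma \ref{lemma 4.3}, does any work. There is no ``double application'' obstacle.

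Second, the subcritical/supercritical case split is unnecessary and, as written, leaves the range $(a-\delta)\log n \leq k^{1/3}\log k \leq (a+\delta)\log n$ uncovered. Your supercritical argument relies only on the clique lower bound $\mathbb{P}(N\geq k) \geq n^{-(1+\e_0)ak^{2/3}}$ from Lemma \ref{lower2}, which holds for \emph{all} $k$ in the stated range---so that argument already suffices throughout, and the separate subcritical treatment is redundant.

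The paper organizes exactly your supercritical argument into three clean steps, all using only Lemma \ref{lower2} for the lower bound: (1) with high conditional probability the $\TISG$ has tree-excess at most $b\tilde{k}^{2/3}-1$; (2) with high conditional probability any subgraph on $O(k^{1/3})$ vertices has at most $h(k)+c_2k^{2/3}$ edges; (3) on these events, apply Lemma \ref{lemma 4.3} to extract $G''$, use Step 2 to cap $|E(G'')|$, and invoke Proposition \ref{equality holder} once to land in $\cC_{12\e^{1/4}}$.
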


We now finish the proof of Theorem \ref{theorem 2} first.

\begin{proof}[Proof of Theorem \ref{theorem 2}] 
By the monotonicity of the events $\cD_\e$ and  $\cC_\e$ in $\e,$ it suffices to prove them for small enough $\e,$ an assumption that is particularly convenient for some of our arguments. 
 By the above two propositions, {for any small enough $\e>0$},
\begin{align*}
\lim_{n\rightarrow \infty} \mathbb{P}(\cD_\e \cup  \cC_{12\e^{1/4}} |  N \geq k )  =  1.
\end{align*} 
{Observing that $\cD_\e$ and $\cC_\e$ are decreasing as $\e\rightarrow 0$, by the arbitrariness of $\e>0$,}
\begin{align}\label{0112}
\lim_{n\rightarrow \infty} \mathbb{P}(\cD_\e \cup  \cC_\e |  N\geq k )  =  1.
\end{align} 

\noindent
To obtain \eqref{012} and \eqref{013}, we first verify that for sufficiently small $\e>0$,
\begin{align} \label{430}
\lim_{n\rightarrow \infty} \mathbb{P}( \cD_\e \cap  \cC_\e |  N \geq k )  =  0.
\end{align} 
By \eqref{holder}, $\cC_\e$ implies the existence of a subset $V'$ such that {$|V'| \leq (1+\e) 6^{1/3} k^{1/3}=:v$ and   $|E(V')|\geq  a (1-\e)^{2/3} k^{2/3}=:e$ (where, recall that the value of $a$ is specified to be $ (\frac{3}{\sqrt{2}})^{2/3}$).} By a first moment bound, there is $\gamma = \gamma(\e)>0$ with $\lim_{\e \rightarrow 0} \gamma = 0$ such that for large enough $k$, the probability that such a subgraph exists is bounded by
{
\begin{align} \label{431}
\binom{n}{v}  \binom{\binom{v}{2}}{e} \left ( \frac{d}{n} \right )^{e} 
\leq 
n^{v} 2^{v^2}  \left ( \frac{d}{n} \right )^{e}  
\leq n^{ - (a - \gamma)  k^{2/3}}.
\end{align} 
}
{Recall that  $\cD_\e$ implies the existence of {at least}  $(1-\e)k$ disjoint triangles, and under the event $\cC_\e$, there are at most $|V'|  = O(k^{1/3})$ {vertex-disjoint} triangles sharing some vertex in $V'$. Hence,  for large enough $k$, $\cD_\e \cap  \cC_\e$ implies the disjoint occurrence of the subgraph $V'$ and at least $(1-2\e)k$ disjoint triangles.}
Thus, by Remark \ref{triangle upper} and  BK inequality, 
\begin{align*}
 \mathbb{P}(\cD_\e \cap  \cC_\e )  \leq n^{ - (a - \gamma)  k^{2/3}} \cdot e^{-(1-2\e) k\log k +2 c'k} =: S.
\end{align*}
Relying on Proposition \ref{lower}, for small enough $\e>0$, it follows that the quantity $S$  is negligible compared to   $\mathbb{P}(N \geq k)$. To see this,   observe that $S  = o(1) e^{- k\log k  - ck}$ is equivalent to
\begin{align*}
 - (a-\gamma ) k^{2/3} \log n + 2\e k \log k  + 2c'k \rightarrow  - \infty,
\end{align*}
which holds  for small enough $\e>0$ when $k^{1/3} \log k \leq a\log n$. Further,  $S  = o(1) n^{-(1+\e) ak^{2/3}}$ is equivalent to
\begin{align*}
  (\gamma +  a\e ) k^{2/3} \log n - (1- 2\e) k \log k  + 2c'k \rightarrow  - \infty,
\end{align*}
which holds    for small enough $\e>0$  when $k^{1/3} \log k \geq a\log n$.   
Therefore,
we have  \eqref{430}.

\noindent
To conclude the proof, note that 
when $k^{1/3} \log k \leq (a-\delta)\log n$, by Lemma \ref{lower1}
and
\eqref{431},
\begin{align*}
\lim_{n\rightarrow \infty} \mathbb{P}(  \cC_\e |  N\geq k )  =  0,
\end{align*} 
whereas,
in the case $k^{1/3} \log k \geq (a+\delta)\log n$, by Lemma \ref{lower2} and Remark \ref{triangle upper},
\begin{align*}
\lim_{n\rightarrow \infty} \mathbb{P}(  \cD_\e |  N\geq k )  =  0.
\end{align*} 
{
This, along with \eqref{0112} and \eqref{430} finishes the proof of \eqref{012} and \eqref{013} for sufficiently small $\e>0$. Since events $\cD_\e$ and $\cC_\e$ are increasing in $\e$, this concludes the proof of \eqref{012} and \eqref{013}   for any $\e>0$.
}
\end{proof}

It remains to prove the propositions which we turn to next.

\begin{proof}[Proof of Proposition \ref{prop}]
We show that 
\begin{align*}
\mathbb{P}(\cD_\e^c \cap \cT_\e^c \cap  \{N \geq k\})  = o(1) \mathbb{P}(N\geq k).
\end{align*} 
The event $\cD_\e^c \cap \cT_\e^c \cap  \{N \geq k\}$ implies the  occurrence of the event $E_{\ell_1,\cdots,\ell_m}$ for some $1\leq \ell_1\leq \cdots \leq \ell_m \leq (1-2\e) k$  with $\ell_1+\cdots+\ell_m=k$   such that the number of 1s among the $\ell_i$s, denoted by $j$, is less than $(1-\e)k$.

\noindent
Thus,  for any $\eta>0$, for sufficiently large $n$,
{
\begin{align}   
\mathbb{P}(\cD_\e^c \cap \cT_\e^c \cap  \{N \geq k\})  &\leq \sum_{j=0}^{(1-\e)k}  \sum_{{\ell_{j+1} + \cdots + \ell_m  =  k-j}\atop{2\leq  \ell_{j+1}\leq \cdots\leq  \ell_m\leq (1-2\e)k}}  \mathbb{P} (E_{\ell_1,\cdots,\ell_m}) \nonumber \\
& \overset{\eqref{E}}{\leq}  \sum_{j=0}^{(1-\e)k}  \sum_{{\ell_{j+1} + \cdots + \ell_m  =  k-j}\atop{2\leq  \ell_{j+1}\leq \cdots\leq  \ell_m\leq (1-2\e)k}}   \frac{1}{j!} \Big(\frac{d^3}{6}\Big)^j  \Big(\frac{1}{n}\Big) ^{(1-\eta) (h(\ell_{j+1}) + \cdots + h(\ell_m)) } .
\end{align}
}
{
In order to prove that this is negligible compared to  $ \mathbb{P}(N\geq k)$, we  first show that the summation above is negligible for $\e k \leq j \leq (1-\e ) k$ (Step 1 below). Then, we verify that  the restriction $\ell_m \leq (1-2\e)k$ implies that the remaining summation, i.e. over parts $j\leq \e k,$ is also negligible (Step 2). }

\noindent
\textbf{Step 1.} We prove that
\begin{align}
 \sum_{j=\e k}^{(1-\e)k}  \sum_{{\ell_{j+1} + \cdots + \ell_m  =  k-j}\atop{2\leq  \ell_{j+1}\leq \cdots\leq  \ell_m }}  \frac{1}{j!} \Big(\frac{d^3}{6}\Big)^j  \Big(\frac{1}{n}\Big) ^{(1-\eta) (h(\ell_{j+1}) + \cdots + h(\ell_m)) }  = o(1) \mathbb{P}(N\geq k).
\end{align}
\noindent
{ By  \eqref{110}}, the above sum can be bounded by
\begin{align}  \label{510}
  \sum_{j=\varepsilon k}^{(1-\varepsilon) k}   \frac{1}{j!} \Big(\frac{d^3}{6}\Big)^j   e^{\alpha\sqrt{k-j}}    \Big(  \frac{1}{n}\Big)^{(1-\eta) h(k-j -10^5) }.
\end{align}
{Recalling $d_1= \frac{ed^3}{6}$ and by Stirling's formula \eqref{stirling},}
each term above is bounded by
{\begin{align} \label{511}
d_1^j e^{\alpha\sqrt{k-j}}     e^{-( j\log j + (1-\eta)  h(k-j -10^5)  \log n ) } .
\end{align}}
{Suppose that $j= z k$ with $\e\leq z\leq 1-\e$.  {Recalling that $h(k-j) \approx a(k-j)^{2/3}$ (see \eqref{h(y)} and \eqref{hy})}, for any $\iota>0$, for large  enough $k$,
\begin{align*}
h(k-j -10^5)  \geq   (1-0.1 \iota) h(k-j)  \geq (a-\iota) (k- j )^{2/3} =  (a-\iota) (1-z)^{2/3} k^{2/3}.
\end{align*}
}
{This implies} that for some {constant $c_1 >0$,}
{\begin{align*}
j\log j + (1-\eta)h(k-j -10^5)\log n& \geq  z k \log z +  z  k \log k+ (1-\eta)  (a-\iota) (1-z )^{2/3}  k^{2/3} \log n  \\
& \geq -c_1k + \Big(z + (1-\eta) (a-\iota)   \frac{\log n}{k^{1/3} \log k }   (1-z )^{2/3}  \Big )  k\log k  .
\end{align*}}
{In order to lower bound the second term, let us examine
the concave function $z \mapsto  z +  \chi (1-z )^{2/3}$ on $[0,1],$ parametrized by $\chi>0$. We will show that
 if $\e>0$ is small enough, then for any $\e \leq z\leq 1-\e$ and $\chi \in (0,1)$,
\begin{align} \label{512}
 z +  \chi (1-z )^{2/3}  \geq (1+0.01\e )^{2/3} \chi.
\end{align} }
Let us first conclude the proof of Step 1 assuming \eqref{512}. Using this, there exists $\gamma = \gamma(\e)>0$ such that the following holds for small enough $\iota,\eta,\e>0$ and any $\e\leq z\leq 1-\e$.\\
\noindent
$\bullet$  when $k^{1/3}\log k \leq  a \log n$: 
  \begin{align*}
 z +  (1-\eta)   (a-\iota)\frac{\log n}{k^{1/3} \log k}   (1-z )^{2/3}    \geq  1 + \gamma .
\end{align*} 
To see this, note that for any  $\kappa>0$,   $ (1-\eta)      (a-\iota)\frac{\log n}{k^{1/3} \log k} \geq 1  -  \kappa $ for small enough $\eta,\iota>0$. Thus, 
{\begin{align*}
 z +  (1-\eta)   (a-\iota)\frac{\log n}{k^{1/3} \log k}   (1-z )^{2/3}   & \geq  z+ (1-\kappa) (1-z )^{2/3} \\
 & \geq  (1+0.01\e )^{2/3}  (1-\kappa) \geq 1+0.001\e
\end{align*} 
for small enough $ \kappa>0$,
where the second last inequality follows by setting $\chi = 1-\kappa$ in \eqref{512}.} \\

\noindent
Therefore, for large enough $k$, using \eqref{511}, the quantity \eqref{510} in this case is bounded by
 \begin{align} \label{521}
 k d_1^k e^{\alpha\sqrt{k}}   e^{c_1k}   e^{  - (1+  \gamma) k\log k}   \leq  e^{  - (1+   \gamma/ 2) k\log k}  .
 \end{align}

\noindent
$\bullet$  {when $k^{1/3}\log k \geq a \log n$:   setting $\chi =   (1-\eta)   (a-\iota)  \frac{\log n}{k^{1/3} \log k} \in (0,1)$ in \eqref{512}, taking $\gamma>0$ to be $1+2\gamma = (1+0.01\e)^{2/3}$,}
 \begin{align*}
 z +  (1-\eta)   (a-\iota)  \frac{\log n}{k^{1/3} \log k}    (1-z )^{2/3}    &\geq    (1+2 \gamma) (1-\eta)(a-\iota) \frac{\log n}{k^{1/3} \log k}  \\
 &\geq (1+\gamma)  a\frac{\log n}{k^{1/3} \log k}.
\end{align*} 
Hence, for large enough $k$, in this case the quantity \eqref{510} is bounded by
 \begin{align} \label{522}
  k d_1^k e^{\alpha\sqrt{k}} e^{c_1k} e^{- a (1+\gamma) k^{2/3} \log n  }  \leq n^{-a (1+ \gamma/2 ) k^{2/3}}.
\end{align}   
By Proposition \ref{lower},
quantities \eqref{521} and \eqref{522} are negligible compared to $\mathbb{P}(N\geq k)$ for large enough $k$.

Thus to conclude Step 1 it remains to prove \eqref{512}. By concavity of $z\mapsto z +  \chi (1-z )^{2/3}$, it suffices to consider only two cases $z=\e$ and $z=1-\e$. In the case $z=\e$, using $\chi <1$ and mean value theorem, for small enough $\e>0$,
\begin{align*}
{\chi (  (1+0.01\e )^{2/3} - (1-\e)^{2/3})  \leq 1.01\e \cdot   \frac{2}{3} \cdot 1.1  \leq \e.}
\end{align*}
which after rearranging yields \eqref{512}.
In the case $z=1-\e$, since $\chi\in (0,1)$ it suffices to prove  $ (1+0.01\e )^{2/3}  - \e^{2/3}  \leq   1-\e$. By mean value theorem, for small enough $\e>0$,
\begin{align*}
  (1+0.01\e )^{2/3}  - 1 \leq 0.01\e \cdot  \frac{2}{3}\cdot  1.1 \leq \Big(\frac{1}{\e^{1/3}}-1 \Big )\e  = \e^{2/3}-\e.
\end{align*}

\noindent 
 \textbf{Step 2.}
We next show that  
\begin{align}  \label{120}
\sum_{j=0}^{\varepsilon k}  \sum_{{\ell_{j+1} + \cdots + \ell_m  =  k-j}\atop {2\leq \ell_{j+1}\leq \cdots \leq  \ell_m < (1-2\varepsilon ) k}}   \frac{1}{j!} \Big(\frac{d^3}{6}\Big)^j  \Big(\frac{1}{n}\Big) ^{(1-\eta)(h(\ell_{1}) + \cdots + h(\ell_m)) }   = o(1)\mathbb{P}(N\geq k).
\end{align}  
We claim that for sufficiently small $\e>0$,   for large enough $k$ and  any $ \ell_{j+1} + \cdots + \ell_m  =  k-j, \ell_{j+1}\leq \cdots \leq \ell_m < (1-2\varepsilon ) k$ with $j\leq \e k$,
\begin{align} \label{115}
h(\ell_{j+1}) + \cdots + h(\ell_m)  \geq h(k) + 3   \e^2  k^{2/3}.
\end{align}
{
We first conclude the proof of \eqref{120} assuming this claim.} Using  the above {along with Stirling's formula \eqref{stirling},} for sufficiently small $\eta>0$, the quantity \eqref{120} is  bounded by
{\begin{align} 
  \sum_{j=0}^{\varepsilon k}   \frac{d_1^j}{j^j}   e^{\alpha \sqrt{k}} \Big(  \frac{1}{n}\Big) ^{  (1-\eta)(h(k)+ 3 \varepsilon^{2} k^{2/3}  )}    \leq   C \e k  e^{\alpha\sqrt{k}} \Big(  \frac{1}{n}\Big) ^{ h(k)+ 2 \varepsilon^{2} k^{2/3}  }   \leq  \Big(  \frac{1}{n}\Big) ^{  h(k)+  \varepsilon^{2} k^{2/3} } ,
\end{align}}
{where the first inequality uses $ \frac{d_1^j}{j^j}\leq C$ for any $j\geq 0$ (we set $0^0:=1$).}
Thus, along with the lower bound $\mathbb{P}(N\geq k)$ (Proposition \ref{lower}) concludes the proof of \eqref{120}.

\noindent
Now, let us verify the claim \eqref{115}.
For any $x,y\in [10,(1-2\e)k]$, by \eqref{eq:h_subadd_10}, it holds that
\begin{align*}
h(x)+h(y)  \geq h(10) + h(x+y-10),\qquad { \text{if}} \quad  x+y-10\leq (1-2\e)k
\end{align*}
and by a similar reasoning we also have that
\begin{align*}
 h(x)+h(y)  \geq h(x+y-(1-2\e)k) + h((1-2\e)k),\qquad { \text{if}} \quad  x+y-10 \geq  (1-2\e)k.
\end{align*}

{
We apply this repeatedly to  $\ell_{j+1},\cdots,\ell_m$ until there is at most one element  strictly between $10$ and $(1-2\e)k$. At the end of this procedure, let $i$ be the number of elements less than or equal to 10. Then, we obtain the following two cases, depending on whether $ (1-2\e)k$ appears as a term or not (since this procedure preserves the summation, the term $ (1-2\e)k$ appears at most once for small $\e>0$): 

\noindent
Case 1. $ h(    (1-2\varepsilon ) k)$ does not appear:
\begin{align} \label{121}
h(\ell_{j+1}) + \cdots + h(\ell_m)  \geq i+ h( b_1 )
\end{align}
  with $ (1-2 \varepsilon) k \geq   b_1 \geq k-j-10i$. The lower bound on $b_1$ is a consequence of the facts that the above procedure preserves the summation to be $k-j$ and that $h$ is monotone. \\ 

\noindent
Case 2.  $ h(    (1-2\varepsilon ) k) $ appears:
\begin{align} \label{122}
h(\ell_{j+1}) + \cdots + h(\ell_m)  \geq i + h(b_2) +  h(    (1-2\varepsilon ) k)
\end{align}
  with
 $b _2 \geq  k-j- (1-2\varepsilon ) k - 10i = 2\e k-j-10i$.  The reason for the  lower bound on $b_2$ is similar as above, along with a fact that the term $(1-2\e)k$ is obtained at the end of  procedure.}

 We  first consider the former case. Since  $ (1-2 \varepsilon) k \geq   b_1 \geq k-j-10i$ and $j\leq \e k$, we have  $10i \geq   \e k$. Thus,  for large enough $k$,
\begin{align*}
i + h(b_1) \geq i\geq  \frac{1}{10}\e k  \overset{\eqref{h(y)}}{ \geq } h(k) + 3   \e^2  k^{2/3}.
\end{align*}
{Note that the last inequality holds since LHS and RHS are of order $k$ and $k^{2/3}$ respectively.
}

\noindent
We next consider the latter case  \eqref{122}.  Since  $2\e k-j \geq 0$ (recall that $j \leq \varepsilon k$),
by Lemma \ref{lemma 3.0},
\begin{align*}
i+h( b_2) + h((1-2\e )k) &\geq i+h(2   \varepsilon k - j  - 10i) +  h((1-2\e )k) 
\\ & \geq  h(2\varepsilon k-j) - h(10^6)   +  h((1-2\e )k) .
\end{align*}
Thus, in order to verify \eqref{115},
it suffices to show that for sufficiently small $\varepsilon >0$, for large  enough $k$,
\begin{align}   \label{123}
h(2\varepsilon k-j) + h (  (1-2\varepsilon ) k )   \geq h(k) +3  \e^2  k^{2/3} + h(10^6).
\end{align}
In fact, by mean value theorem, for large enough $k$,
{\begin{align*}
h(k) -  h (  (1-2\varepsilon ) k )  \leq  2\varepsilon k  h'(  (1-2\varepsilon ) k )\overset{ \eqref{h'(y)}}{ \leq} 2\varepsilon k \cdot 1.01 \cdot  a \frac{2}{3} ( k -2\varepsilon k)^{-1/3}.
\end{align*}}Since $j \leq \varepsilon k$,  by \eqref{hy},  for large enough $k$,
\begin{align*}
h(2\varepsilon k-j) \geq h(\varepsilon k)\geq \frac{1}{2}a (\varepsilon k)^{2/3} + h(10^6).
\end{align*}
{
By the above two displays, to prove \eqref{123}, it suffices to check}
\begin{align*}
 \frac{1}{2}a (\varepsilon k)^{2/3} \geq 2\varepsilon k \cdot 1.01 \cdot  a \frac{2}{3} ( k -2\varepsilon k)^{-1/3} + 3\e^2 k^{2/3}.
\end{align*}
Since the coefficients of $k^{2/3}$ in LHS and RHS are of order $\e^{2/3}$ and $\e$ as $\e\rightarrow 0$ respectively, this inequality holds  for sufficiently small $\e>0$.

\end{proof}

We now move on to the proof of Proposition \ref{prop2}.

\begin{proof}[Proof of Proposition \ref{prop2}]

{Under the event  $\cT_\e$, let $G'$ be a $\TISG$   spanned by  $\tilde{k} := \left \lceil{ (1-2 \varepsilon )k}\right \rceil
 $ triangles. We will  prove that  with high probability conditionally on $\{N\geq k\}$,  $G'$ contains a subgraph which looks like a clique and contains at least $(1-12\e^{1/4})k$ triangles.  
{The argument has three parts (carried out in the sequel in Steps 1,2 and 3 respectively). {First,  we show that with high probability conditioned on $\{N\geq k\}$, the number of tree-excess edges in  $G'$  is at most of order $k^{2/3}$ (Step 1). Hence by Lemma \ref{lemma 4.3}, $G'$ contains a `dense' subgraph $G''$ of  size $O(k^{1/3})$ which  contains close to $k$ triangles.    Next, we deduce that with high probability conditioned on $\{N\geq k\}$, any subgraph of size $O(k^{1/3})$ has  `almost' less than $h(k)$ edges (Step 2). This implies that $G''$ almost attains the equality in \eqref{holder} and thus as an application of Proposition \ref{equality holder}, $G''$ contains a subgraph which is close to a clique and contains almost $k$ triangles. This is done in Step 3.}}

\noindent
\textbf{Step 1.}
We prove that there exists a large  constant $b>0$ such that following holds: With high probability conditionally on $\{N\geq k\}$, any $\TISG$  $H$ spanned by $\tilde{k}$ triangles satisfies
{\begin{align}\label{401}
 |E(H)| - |V(H)| \leq   b \tilde{k}^{2/3} -1.
\end{align}} 
As in the proof of Lemma \ref{lemma 3.3},  let
$F_{\tilde{k},v,e}$ be an event that there exists  a $\TISG$ spanned by  $\tilde{k}$ triangles with $v$  vertices and  $e$ edges.  Then,
\begin{align*}
    \mathbb{P}&( \,
    \exists\, \TISG \, \  H \ \text{spanned by} \ \tilde{k} \  \text{triangles with} \  |E(H)| - |V(H)| \geq   b \tilde{k}^{2/3} -1 ) \\
    &   \leq  \sum_{   {e\geq v+  b \tilde{k}^{2/3} -1 } \atop {e,v\leq 3\tilde{k}} }    \mathbb{P}(F_{\tilde k,v,e} )   \overset{\eqref{332}}{\leq}   \sum_{   {e\geq v+   b \tilde{k}^{2/3}-1 } \atop {e,v\leq 3\tilde{k}} }    \Big  ( \frac{d^5v^{10}}{n} \Big )^{e-v}  d^5v^{10}  \\
    & \leq  (3\tilde{k})^2 \Big ( \frac{d^5(3\tilde{k})^{10}}{n}\Big)^{ b\tilde{k}^{2/3}-1}  d^5 (3\tilde{k})^{10}  \\
   & \leq    { C\tilde{k}^{12} \cdot  \frac{(C\tilde{k})^{10b\tilde{k}^{2/3}}}{n^{b\tilde{k}^{2/3}-1}  } }   \leq  Ck^{12} \cdot  \frac{(Ck)^{10bk^{2/3}}}{n^{b\tilde{k}^{2/3}-1}  }. 
\end{align*}
We show that this is bounded by    $(\frac{1}{n})^{h(k)+ \e b  k^{2/3}} $
  for small enough $\xi,\e>0$.  Once we show this, by  the lower bound on $\mathbb{P}(N\geq k)$ in   Lemma \ref{lower2},  \eqref{401} follows.
 
\noindent
Rearranging, we need to establish
  \begin{align} \label{406}
C^{10bk^{2/3}+1}  \cdot  k^{12 + 10bk^{2/3}} \leq n^{b\tilde{k}^{2/3} -1 - h(k)- \e b  k^{2/3}  }.
  \end{align}
By the condition $ k \leq n^{\frac{1}{10}-\mu}$, for large enough $n$, the LHS  is bounded by
\begin{align*}
n^{b \mu k^{2/3}}\cdot  n^{(1-10\mu) bk^{2/3} +2} = n^{ (1-9\mu) b k^{2/3}+2}.
\end{align*}
Whereas,  for small enough  $\e>0$ and large enough $b>0$, the exponent of $n$ in the RHS of \eqref{406} is
{
\begin{align*}
 b\tilde{k}^{2/3} -1- h(k)- \e b  k^{2/3} & \overset{\eqref{h(y)}}{  \geq }   ( (1-2\e)^{2/3}  - \e) b k^{2/3}  - ak^{2/3} -1\\
&\geq ((1-\mu)b  - a) k^{2/3} >  (1-9\mu) b k^{2/3}+2,
\end{align*}
where the last inequality  uses the fact that  $(1-\mu)b  - a   >  (1-9\mu) b $ for  large enough $b$.
We thus obtain \eqref{406}.
}

}

\noindent
\textbf{Step 2.} 
 For any {constants $c_1,c_2>0$}, with high probability {conditionally on $\{N\geq k\}$}, any subgraph $H$ (need not be a $\TISG$ {or a connected graph}) with $|V(H)| < c_1k^{1/3}$ satisfies
{\begin{align} \label{402}
|E(H)| <  h(k) + c_2 k^{2/3}
\end{align}}
for  large enough $n\geq n_0(c_1,c_2)$ and $k\geq k_0(c_1,c_2)$ (a useful consideration is the case when $c_1$ is large and $c_2$ is small).
 In fact, for large enough $n$ and  $k$, the probability that there exists a subgraph $H$ with $|V(H)| < c_1k^{1/3}$  and  $|E(H)| \geq h(k) + c_2 k^{2/3} $
 is bounded by
 \begin{align*}
n^{c_1 k^{1/3}} 2^{c_1^2 k^{2/3}}  \Big(\frac{d}{n}\Big)^{h(k) + c_2 k^{2/3} } \leq \Big(\frac{1}{n}\Big)^{h(k) + \frac{c_2}{2} k^{2/3}}.
 \end{align*}
{By the lower bound on $\mathbb{P}(N\geq k)$ in  Lemma \ref{lower2}}, \eqref{402} follows.\\

\noindent
{\textbf{Step 3.} Recall that $G'$ is a $\TISG$ spanned by  $\tilde{k}$   triangles.}
 By \eqref{401}, for a  large enough  constant $b>0$, with high probability {conditionally on $\{N\geq k\}$}, {
 \begin{align*}
  |E(G')| - |V(G')| \leq     b \tilde{k}^{2/3}-1   .
\end{align*} }
Thus, setting $\xi>0$ to be such that $b= \frac{1}{2} \xi^{-1/2} $, $G'$ satisfies the hypotheses of Lemma \ref{lemma 4.3}, on the application of which, we conclude that  for sufficiently large $k$, $G'$ contains a dense subgraph $G''$ such that
\begin{align} \label{403}
|V(G'')| \leq  \xi^{-3/2}\tilde{k}^{1/3}  \leq  \xi^{-3/2} (1-\e)^{1/3} k^{1/3}  
\end{align}
and ({by taking large enough $b$, i.e. small enough $\xi>0$})
\begin{align} \label{404}
  \Delta(G'') \geq  (1-2\xi^{1/2})\tilde{k}  \geq     (1-3\e) k.
\end{align}
{Note that Lemma \ref{lemma 4.3} also implies a  lower bound for $|E(G'')|$ from \eqref{333}. However for our purposes, we will instead be interested in a sharp upper bound for $|E(G'')|$, which along with \eqref{404} will be used to finish the argument.} Towards this, 
by \eqref{402} and \eqref{403},    with high probability (conditionally),
\begin{align} \label{405}
 |E(G'')| \leq  h(k) + a \e  k^{2/3} \leq  a(1+\e) k^{2/3}.
\end{align}  
Thus
\eqref{404} and \eqref{405} imply that $G''$ almost attains an equality in Lemma \ref{lemmaholder}. This allows us to invoke Proposition \ref{equality holder}.  Applying the latter with $6k$ and $3\e$ in place of $k$ and $\e$ respectively, allows us to conclude that $G''$ contains a subgraph of size at most  $(1+12\e^{1/4})6^{1/3} k^{1/3}$ with at least $(1-12\e^{1/4})k$ triangles. This concludes the proof.

\end{proof}

 \bibliography{ldp_ref}
\bibliographystyle{plain}

\end{document}